\documentclass[12pt]{amsart}
\setlength{\textwidth}{6.0in}
\setlength{\textheight}{8.6in}
\setlength{\oddsidemargin}{4mm}
\setlength{\evensidemargin}{4mm}
\setlength{\footskip}{12mm}
\usepackage{amsfonts}
\usepackage{amssymb}
\usepackage{color}
\usepackage{dsfont}
\usepackage{setspace}
\usepackage[fleqn,tbtags]{mathtools}
%
\numberwithin{equation}{section}
\pagestyle{plain}
\theoremstyle{plain}
 \newtheorem{theorem}{Theorem}[section]
 \newtheorem{lemma}[theorem]{Lemma}
 \newtheorem{corollary}[theorem]{Corollary}
 \newtheorem{proposition}[theorem]{Proposition}

\theoremstyle{definition}
 \newtheorem{definition}[theorem]{Definition}
 \newtheorem{example}[theorem]{Example}
 \newtheorem{remark}[theorem]{Remark}


\newcommand{\R}{\mathbb{R}}

%

\newcommand{\bN}{\mathbb{N}}
\newcommand{\bZ}{\mathbb{Z}}
\newcommand{\bR}{\mathbb{R}}

\newcommand{\bC}{\mathbb{C}}
\newcommand{\cB}{\mathcal{B}}

\newcommand{\cF}{\mathcal{F}}

\newcommand{\CLK}{\mathrm{CLK}_0^n}
\newcommand{\CLKeins}{\mathrm{CLK}_0^1}
\newcommand{\GFSC}{\mathrm{GFS}(\bR^n,\bC)}
\newcommand{\GFSA}{\mathrm{GFS}(\bR^n,A)}
\newcommand{\GFSQ}{\mathrm{GFS}(\bR^{n-q},A)}
\newcommand{\CAP}{\mathrm{CAP}(\bR^n,A)}




\newcommand{\re}{\mathrm{e}}
\newcommand{\ri}{\mathrm{i}}
\newcommand{\di}{\mathrm{d}}
\newcommand{\one}{\mathbf{1}}

\setlength{\baselineskip}{22pt}
\setlength{\parindent}{1.8pc}
\allowdisplaybreaks

\begin{document}
\author{David Berger}\thanks{David Berger (corresponding author): Technische Universit\"at Dresden,
Institut f\"ur Mathematische Stochastik, D-01062 Dresden, Germany, david.berger2@tu-dresden.de}
\author{Alexander Lindner}\thanks{Alexander Lindner: Ulm University, Institute of Mathematical Finance, D-89081 Ulm, Germany,
alexander.lindner@uni-ulm.de}
\title{Invertible complex  measures on Euclidian space}
\keywords{Generalised Fourier series; invertible measure; L\'evy--Khintchine representation; quasi-infinitely divisible distribution}
\maketitle

\begin{abstract}
In 1971 Taylor characterised all complex measures on $\bR$ that are invertible with respect to convolution as those which can be written in the form $\delta_\gamma \ast \sigma^{\ast m} \ast \exp(\nu)$ for some $\gamma\in \bR$, some complex measure $\nu$, some $m\in \bZ$ and a given fixed invertible finite signed measure $\sigma$ (which has characteristic function $\bR \ni z \mapsto (1+\ri z)/(1-\ri z)$). We extend Taylor's result to complex measures on $\bR^n$. Somewhat surprisingly, the structure of invertible complex measures on $\bR^n$ is not much more complicated than that of complex measures on $\bR$, in the sense that they can be represented as $\delta_\gamma \ast \sigma_1^{\ast m_1} \ast \ldots \ast \sigma_p^{\ast m_p} \ast \exp(\nu)$ for some $\gamma \in \bR^n$, some complex measure $\nu$ and $m_1,\ldots, m_p\in \bZ$, where the $\sigma_i$ correspond to $\sigma$ in the one-dimensional case and actually live on $1$-dimensional subspaces of $\bR^n$. Our proof relies on a general result of Taylor for invertible complex measures on locally compact abelian groups. To apply Taylor's result, we extend some existing results for $\bC$-valued functions to functions with values in a semisimple commutative unital Banach algebra with connected Gelfand space. The study of invertible complex measures on $\bR^n$ has some impact on the theory of quasi-infinitely divisible probability distributions on $\bR^n$.
\end{abstract}

\section{Introduction} \label{S-Introduction}
In the following, by a \emph{measure} we will always mean a positive measure, i.e.~a
$\sigma$-additive $[0,\infty]$-valued set-function on a $\sigma$-algebra (assigning the value 0 to the empty set), while by \emph{complex} or \emph{signed measures} we mean $\bC$-valued or $[-\infty,\infty]$-valued $\sigma$-additive set-functions (assigning the value 0 to the empty set). In particular, a complex measure is always finite. Denote by $\cB^n$ the usual Borel $\sigma$-algebra on $\bR^n$ and
by $M(\bR^n)$ the set of all complex measures on $(\bR^n,\cB^n)$.  Equipped with the total variation norm and convolution as multiplication, $M(\bR^n)$ becomes a commutative unital Banach algebra, with identity $\delta_0^n$, the Dirac measure at $0$ in $\bR^n$.
We are interested in the question of when $\mu$ is \emph{invertible}, i.e.~when there exists a complex measure $\mu' \in M(\bR^n)$ such that $\mu' \ast \mu = \delta_0^n$, or in other words, when $0$ is outside the spectrum of $\mu$. It is well known that an easy description of the spectrum  of $M(\bR^n)$ and the corresponding Gelfand space is very difficult (cf.~Taylor~\cite[Rem.~1.3.5]{T73}). Denote by $\widehat{\mu}$ the characteristic function, i.e.~the Fourier transform of $\mu$, defined by
\begin{equation} \label{eq-def-FT}
\widehat{\mu}:\bR^n \to \bC, \quad z \mapsto \widehat{\mu}(z) := \int_{\bR^n} \re^{\ri z^T x} \, \mu(\di x),
\end{equation}
where $z^T$ denotes the transpose of the column vector $z\in \bR^n$.
Then each $\mu\in M(\bR^n)$ is uniquely determined by $\widehat{\mu}$, and $\widehat{\mu}$ is clearly continuous and bounded on $\bR^n$. Hence, if $\mu$ is invertible, say $\mu \ast \mu' = \delta_0^n$ with $\mu'  \in M(\bR^n)$, then $\widehat{\mu'}$ is bounded, hence $\widehat{\mu}$ must be bounded away from zero, i.e.
\begin{equation} \label{eq-bounded-cf}
\inf_{z\in \bR^d} |\widehat{\mu}(z)| > 0
\end{equation}
is necessary for $\mu$ to be invertible. In particular, by the Riemann--Lebesgue lemma, an absolutely continuous (with respect Lebesgue measure) complex  measure  can never be invertible. One might hope that condition \eqref{eq-bounded-cf} is also sufficient for $\mu$ to be invertible, but this is not the case, see Taylor~\cite[Prop. 1.3.3]{T73} for a counterexample. On the other hand, an easy sufficient condition for $\mu$ to be invertible is that $\mu$ is the exponential of
some complex measure $\nu\in M(\bR^n)$, i.e.~that
\begin{equation} \label{eq-exponential}
\mu = \exp(\nu) := \sum_{j=0}^\infty \frac{1}{j!} \nu^{\ast j} , \quad \nu \in M(\bR^n),
\end{equation}
where $\nu^{\ast j}$ denotes the $j$-fold convolution of $\nu$ with itself (with $\nu^{\ast 0} := \delta_0^n$). For if $\mu = \exp (\nu)$, then also
$\exp(-\nu) \in M(\bR^n)$ and
$$\exp(-\nu) \exp(\nu) = \exp(-\nu+\nu) = \exp (0_{M(\bR^n)}) = \delta_0^n,$$
showing that $\mu$ is invertible (where $0_{M(\bR^n)}$ denotes the zero-measure on $\bR^n$). The question of what are necessary and sufficient conditions for $\mu$ to be invertible is much more involved. For complex measures on $\bR$, a complete characterisation was obtained in 1971 by Taylor~\cite[Cor. 4.3]{T71}, see also his expositions in~\cite{T72,T73}.

\begin{theorem} \label{t-Taylor1} {\rm (Taylor~\cite[Cor. 4.3]{T71}, also \cite[Cor. 4.8]{T72} and \cite[Thm. 8.4.1]{T73}).} \\
Denote by $\sigma$ the finite signed measure on $(\bR^1,\cB^1)$ whose characteristic function is given by $\widehat{\sigma}(z) = (1+\ri z)/(1-\ri z)$, $z\in \bR$, i.e.~$\sigma(\di s) :=   2 \re^{-s} \one_{(0,\infty)}(s) \, \di s - \delta_0^1 (\di s)$. Then a complex measure $\mu\in M(\bR)$ is invertible if and only if there are $m\in \bZ$, $\gamma \in \bR$ and $\nu\in M(\bR)$ such that
$$\mu = \delta_\gamma^1 \ast \sigma^{\ast m} \ast \exp(\nu).$$
\end{theorem}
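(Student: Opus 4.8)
The plan is to prove the two implications separately, with essentially all of the difficulty concentrated in the necessity direction.

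For sufficiency I would simply exhibit inverses factor by factor, using that a product of invertible elements of the Banach algebra $M(\bR)$ is invertible. The measure $\delta_\gamma^1$ is invertible with inverse $\delta_{-\gamma}^1$, and $\exp(\nu)$ is invertible with inverse $\exp(-\nu)$, as already noted in the text. For $\sigma$ I would introduce its reflection $\widetilde\sigma$, defined by $\widetilde\sigma(A):=\sigma(-A)$, so that $\widehat{\widetilde\sigma}(z)=\widehat\sigma(-z)=(1-\ri z)/(1+\ri z)$; since $\widehat\sigma\cdot\widehat{\widetilde\sigma}\equiv 1$ and the Fourier transform is injective and turns convolution into multiplication, $\sigma\ast\widetilde\sigma=\delta_0^1$, so $\sigma$, and hence every $\sigma^{\ast m}$ with $m\in\bZ$, is invertible. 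This settles the ``if'' part.

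For necessity I would pass to the group $G$ of invertible elements of $M(\bR)$ and its principal component. The first key step is the standard Banach-algebra fact that, since $M(\bR)$ is commutative, the image $\exp(M(\bR))$ of the exponential map is an open, connected normal subgroup, and is therefore exactly the connected component of the identity $\delta_0^1$ in $G$. Consequently $G/\exp(M(\bR))$ is a discrete abelian group, and the theorem asserts precisely that the classes $[\delta_\gamma^1]$ ($\gamma\in\bR$) and $[\sigma]$ generate it. The second key step is to identify this quotient cohomologically: by the Arens--Royden theorem the Gelfand transform induces an isomorphism $G/\exp(M(\bR))\cong\check H^1(\Delta,\bZ)$, where $\Delta$ is the maximal ideal (Gelfand) space of $M(\bR)$; equivalently, an invertible $\mu$ lies in $\exp(M(\bR))$ if and only if its Gelfand transform, a nowhere-vanishing continuous function on $\Delta$, is null-homotopic as a map into $\bC^\times$. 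On the copy of $\bR$ sitting inside $\Delta$ the invariant attached to $\sigma$ is a winding number: $\widehat\sigma(z)=\re^{2\ri\arctan z}$ winds once around $0$ as $z$ runs from $-\infty$ to $+\infty$, so $[\sigma]$ is nontrivial, whereas the almost-periodic transform $\widehat{\delta_\gamma^1}(z)=\re^{\ri\gamma z}$ records the frequency $\gamma$ and is detected on the Bohr-compactification part of $\Delta$. The concluding step would be, given an invertible $\mu$, to strip off first the frequency of the dominant atom by a translation $\delta_{-\gamma}^1$ and then the winding at infinity by a suitable power $\sigma^{\ast(-m)}$, and to show that the normalised measure then carries trivial index and hence lies in $\exp(M(\bR))$.

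The main obstacle is exactly this last step. The Gelfand space $\Delta$ of $M(\bR)$ is notoriously inexplicit and vastly larger than $\bR\cup\{\infty\}$: it sees the Bohr compactification through the almost-periodic atomic part and a complicated boundary at infinity through the continuous part, so controlling $\check H^1(\Delta,\bZ)$ is delicate. I expect the genuine work --- and the point at which I would fall back on Taylor's structural analysis of $M(\bR)$ via generalised Fourier series and the \v{S}ilov boundary --- to be the proof that, after removing the contributions of $\delta_\gamma^1$ and $\sigma^{\ast m}$, no further index obstruction survives, that is, that these two families already exhaust all of the connected-component information of $G$.
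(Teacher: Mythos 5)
Your sufficiency argument is complete and correct: $\delta_\gamma^1$, $\exp(\nu)$ and $\sigma$ are each invertible (the reflection $\widetilde\sigma$ is exactly the inverse $\sigma^{-1}(\di s)=2\re^{s}\one_{(-\infty,0)}(s)\,\di s-\delta_0^1(\di s)$ recorded after the theorem), and invertibility is preserved under convolution. The necessity direction, however, contains a genuine gap at precisely the point where the theorem lives. You correctly set up the framework --- $\exp(M(\bR))$ is the principal component of the group $G$ of invertibles, the quotient $G/\exp(M(\bR))$ is discrete, and by Arens--Royden it is isomorphic to $\check H^1(\Delta,\bZ)$ --- and you correctly compute the winding number of $\widehat\sigma(z)=\re^{2\ri\arctan z}$. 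But the assertion that the classes $[\delta_\gamma^1]$ and $[\sigma]$ \emph{generate} this quotient is not an input to the proof; it \emph{is} the theorem. The Gelfand space $\Delta$ of $M(\bR)$ is far larger than the Bohr compactification together with a circle at infinity (it contains, among other things, points coming from continuous singular measures and from the asymmetry of the two topologies on $\bR$), and nothing in your sketch rules out additional index obstructions supported on those parts of $\Delta$. Likewise, the step "strip off the frequency of the dominant atom" presupposes a structural decomposition of a general invertible $\mu$ (which need not visibly exhibit a distinguished atom) that you have not established. Since you explicitly say you would "fall back on Taylor's structural analysis" at exactly this point, the necessity direction is in effect a citation of the result being proved.

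For comparison: the paper does not prove this statement either --- it is quoted from Taylor (\cite[Cor.~4.3]{T71}) --- and the paper's own machinery recovers it only as the $n=1$ case of Theorem~\ref{t-main1}, whose proof routes through Taylor's structure theorem (Theorem~\ref{t-Taylor2}): every invertible $\mu$ factors as $\mu_1\ast\dots\ast\mu_p\ast\exp(\nu)$ with each $\mu_i$ of the form $\alpha_i\delta_0+\widetilde{\mu_i}$, $\widetilde{\mu_i}$ absolutely continuous with respect to a Haar measure for a stronger group topology; for $n=1$ the two cases $q=0$ (discrete, Theorem~\ref{t-Khartov}) and $q=1$ (point mass plus $L^1$ density, Proposition~\ref{p2-q=n}) then produce the factors $\delta_\gamma^1$, $\sigma^{\ast m}$ and $\exp(\nu)$. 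Your Arens--Royden/\v{C}ech-cohomology route is indeed the language of Taylor's original argument, but the decomposition supplied by Theorem~\ref{t-Taylor2} (or an equivalent analysis of $\Delta$) is the indispensable ingredient in either approach, and it is exactly what is missing from your proposal.
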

Here, $\sigma^{\ast m}$ when $m \in \{-1,-2,\ldots\}$ is to interpreted as $(\sigma^{-1})^{\ast |m|}$, where $\sigma^{-1}$ is the inverse of $\sigma$ in $M(\bR)$, given by $\sigma^{-1} (\di s) = 2 \re^{s} \one_{(-\infty,0)}(s) \, \di s - \delta_0^1 (\di s)$, with characteristic function $\widehat{\sigma^{-1}}(z) = (1-\ri z)/(1+\ri z)$, $z\in \bR$. Further, the Dirac measure at a point $\gamma\in \bR^n$ will always be denoted by $\delta_\gamma^n$.

We see that invertible complex measures on $\bR$ are \lq almost\rq~given by $\exp(\nu)$, the only missing ingredients being the shift $\delta_\gamma^1$ and $\sigma^{\ast m}$. Taylor obtained his result as a consequence of a more general result on invertible regular complex measures on locally compact abelian groups. We state it here only for the case when $G$ is the locally compact abelian group $\bR^n$ (with addition and the usual euclidian topology). In the following, for a general topology $\tau$ on $\bR^n$ we write $\bR^n_\tau$ to denote the group $\bR^n$ (under addition) equipped with the topology $\tau$, while if the topology is the euclidian topology we usually simply write $\bR^n$. Further terms used in the following result will be explained in Section~\ref{S-prelim}.

\begin{theorem} \label{t-Taylor2} {\rm (Taylor~\cite[Cor. 4.2]{T71}, also~\cite[Thm. 3]{T72} and \cite[Thm.~8.2.4]{T73}).}\\
Let $\mu \in M(\mathbb{R}^n)$ be invertible. Then there exist  $p\in\mathbb{N}_0$, $\nu\in M(\bR^n)$, topologies $\tau_1,\dotso,\tau_p$ on $\bR^n$, all at least as strong as the euclidean topology and such that $\bR^n_{\tau_i}$ remains a locally compact abelian group for all $i\in \{1,\ldots, p\}$, and complex measures $\mu_1,\ldots, \mu_p \in M(\bR^n)$ such that
$\mu$ is of the form
\begin{align*}
\mu= \mu_1 \ast \dotso\ast  \mu_p \ast \exp(\nu),
\end{align*}
where each $\mu_i$ is invertible and of the form $\mu_i = \alpha_i \delta_0^n + \widetilde{\mu_i}_{|\cB^n}$, where $\alpha_i \in \bC$ and $\widetilde{\mu_i}$ is a regular complex  measure on $(\bR^n_{\tau_i}, \cB(\bR^n_{\tau_i}))$ that is absolutely continuous with respect to the Haar measure on $\bR^n_{\tau_i}$.
\end{theorem}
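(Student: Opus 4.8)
The plan is to forget the measure-theoretic nature of $M(\bR^n)$ and read the factorisation off the topology of its maximal ideal space, treating $M(\bR^n)$ purely as a commutative unital Banach algebra. The first step is to pass to Taylor's structure semigroup: as a convolution measure algebra, $M(\bR^n)$ is isometrically isomorphic to $M(S)$ for a compact commutative topological semigroup $S$ with identity and involution, and its Gelfand space $\Delta$ is identified with the (compact Hausdorff) space $\widehat S$ of nonzero bounded semicharacters of $S$. Under this identification the Gelfand transform extends the Fourier transform \eqref{eq-def-FT}: the ordinary characters $x\mapsto\re^{\ri z^Tx}$ embed $\bR^n$ into $\Delta$, but $\Delta$ is strictly larger because of the generalised characters --- which is exactly why condition \eqref{eq-bounded-cf} is necessary but not sufficient. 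Since $\Delta$ is compact, $\mu$ is invertible if and only if its Gelfand transform is nowhere zero on $\Delta$.

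I would then extract the factor $\exp(\nu)$ by a soft cohomological argument. Because $M(\bR^n)$ is commutative and unital, $\exp(M(\bR^n))$ is an open subgroup of the group $M(\bR^n)^{-1}$ of invertibles and coincides with its identity component. The Arens--Royden theorem supplies a natural isomorphism, induced by the Gelfand transform,
\[
M(\bR^n)^{-1}/\exp(M(\bR^n)) \;\cong\; C(\Delta)^{-1}/\exp(C(\Delta)) \;\cong\; \check H^1(\Delta,\bZ).
\]
Hence, once I exhibit invertible measures $\mu_1,\dots,\mu_p$ of the required shape whose classes multiply to the class of $\mu$ in this quotient, the element $\mu\ast(\mu_1\ast\dots\ast\mu_p)^{-1}$ lies in the identity component and is therefore $\exp(\nu)$ for some $\nu\in M(\bR^n)$; rearranging gives $\mu=\mu_1\ast\dots\ast\mu_p\ast\exp(\nu)$. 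The whole problem is thereby reduced to producing, within the admissible family, enough invertible measures to generate $\check H^1(\Delta,\bZ)$. Note that this family is closed under convolution and convolution inverse, since each admissible measure $\alpha_i\delta_0^n+\widetilde{\mu_i}_{|\cB^n}$ lies in $\bC\delta_0^n\oplus L^1(\bR^n_{\tau_i})$, the unitisation of a closed ideal, which is automatically inverse-closed; this guarantees that a finite $\bZ$-combination of generator classes is again represented by finitely many admissible $\mu_i$, so that $p<\infty$.

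The heart of the matter --- and the step I expect to be the main obstacle --- is to realise the classes of $\check H^1(\Delta,\bZ)$ concretely by such measures. Here I would invoke Taylor's classification of the closed $L$-subalgebras of $M(\bR^n)$: each relevant ``piece'' of the structure semigroup corresponds to a convolution algebra $L^1(\bR^n_\tau)$ for some group topology $\tau$ finer than the euclidean one for which $\bR^n_\tau$ remains locally compact abelian, and adjoining $\delta_0^n$ produces exactly the candidate measures $\alpha\delta_0^n+\widetilde{\mu}_{|\cB^n}$ with $\widetilde\mu$ absolutely continuous with respect to Haar measure on $\bR^n_\tau$. One must then analyse the topology of $\widehat S$ and match its one-dimensional cohomology to the group-theoretic data of these finer topologies, showing that the corresponding classes generate $\check H^1(\Delta,\bZ)$. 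This delicate structural analysis is the crux of Taylor's argument; in the one-dimensional setting of Theorem~\ref{t-Taylor1} it collapses to the single generator furnished by $\sigma$.
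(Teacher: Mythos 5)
First, note that the paper does not prove this statement at all: Theorem~\ref{t-Taylor2} is quoted verbatim from Taylor (\cite[Cor.~4.2]{T71}, \cite[Thm.~3]{T72}, \cite[Thm.~8.2.4]{T73}) and is used as a black box. Your outline does correctly identify Taylor's actual strategy --- structure semigroup, identification of the Gelfand space with the semicharacters $\widehat S$, and the Arens--Royden isomorphism $M(\bR^n)^{-1}/\exp(M(\bR^n))\cong \check H^1(\Delta,\bZ)$, which reduces the theorem to realising generators of $\check H^1(\Delta,\bZ)$ by measures of the stated form. But as a proof the proposal has a genuine gap: that realisation step, which you yourself flag as ``the heart of the matter,'' is precisely the content of the theorem and is nowhere carried out. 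The specific assertion to be proved is that the cohomology classes can be represented by invertible measures of the very particular shape $\alpha_i\delta_0^n+\widetilde{\mu_i}_{|\cB^n}$ with $\widetilde{\mu_i}$ absolutely continuous with respect to Haar measure for a finer locally compact group topology $\tau_i$; deferring this to ``Taylor's classification of the closed $L$-subalgebras'' and a ``delicate structural analysis'' is citing the conclusion, not proving it.

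Two smaller points. The structure semigroup theorem gives an isometric isomorphism of $M(\bR^n)$ onto a weak-$*$ dense $L$-subalgebra of $M(S)$, not onto all of $M(S)$ (this does not damage the spectrum identification, but the statement as written is inaccurate). More seriously, your justification that the admissible family is inverse-closed is wrong: $L^1(\bR^n_{\tau})$ is a closed ideal in $M(\bR^n_{\tau})$, not in $M(\bR^n)$. For instance, with $\tau$ the discrete topology the convolution of a discrete measure with an arbitrary element of $M(\bR^n)$ is generally not discrete, so $L^1(\bR^n_d)$ is not an ideal in $M(\bR^n)$, and the ``unitisation of a closed ideal is automatically inverse-closed'' argument does not apply. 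Inverse-closedness of these subalgebras inside $M(\bR^n)$ is genuinely nontrivial; the paper records it separately as Proposition~\ref{p-Taylor4}, again resting on Taylor's spectral analysis rather than on soft Banach-algebra generalities.
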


Actually, Taylor~\cite{T71,T72,T73,T-spectrum} also shows that the inverses of the $\mu_i$ are of the same form as the $\mu_i$. We will take this point up in more detail in Proposition~\ref{p-Taylor4}.

While Theorem~\ref{t-Taylor2} is the key ingredient for studying invertibility of complex measures on $\bR^n$,  it still leaves some questions open, in particular the determination of the invertible complex measures $\mu_i$ appearing there. In Section~\ref{S2} we will state our main results, giving a complete characterisation of invertible complex measures on~$\bR^n$, thus generalising Theorem~\ref{t-Taylor1} to higher dimensions. Somewhat surprisingly, it turns out that the structure of invertible complex measures on $\bR^n$ is not much more complicated than the structure of those on $\bR$.

Our original motivation for studying invertibility of  complex measures comes from the theory of quasi-infinitely divisible probability distributions. These are probability measures $\mu$ on $\bR^n$ for which there exist two infinitely divisible distributions $\mu_1$ and $\mu_2$ such that $\mu_1 \ast \mu = \mu_2$. They can be characterised as probability measures whose characteristic function has  a L\'evy--Khintchine type representation with a quasi-L\'evy measure, this is a \lq signed L\'evy measure\rq~rather than a (positive) L\'evy measure, see~\cite{BKL21,LPS2018}  for more information on such probability measures. It is already evident from the definition that there should be some relations between invertibility of complex measures and quasi-infinitely divisible distributions, and in fact using invertible complex measures has turned out a very helpful tool for identifying various classes of quasi-infinitely divisible distributions, see e.g.~Alexeev and Khartov~\cite[Thm.~3.2]{AlexeevKhartov23}, Berger~\cite[Proof of Thm.~4.12]{B19} or Berger and Kutlu~\cite[Thm.~2.1]{BK23}. Further literature on quasi-infinitely divisible distributions includes Alexeev and Khartov~\cite{AlexeevKhartov23b}, Cuppens~\cite{Cuppens1975}, Khartov~\cite{Khartov2024,Khartov2025}, Nakamura and Suzuki~\cite{NakamuraSuzuki}, Passeggeri~\cite{Passeggeri} or Zhang et al.~\cite{ZhangLiKerns}, to name just a few.
Besides its applications to quasi-infinitely divisible distributions, invertibility of complex measures is clearly related to invertibility of convolution operators, see Taylor~\cite[Prop.~1.2.5, Cor.~1.2.8]{T73}, and it helps to clarify the spectrum  of elements of $M(\bR^n)$. Finally, we find  the question of invertibility of complex measures on $\bR^n$ interesting in its own right. Recent studies related to invertibility of complex measures include
Ohrysko and Wasilewski~\cite{OW2020} and Raubenheimer and van Appel~\cite{Raubenheimer}.

The paper is organised as follows. In the next section, we will state our main results, which give characterisations of invertible complex measures on $\bR^n$, thus extending Theorem~\ref{t-Taylor1} to higher dimensions. The proof of these results relies on Theorem~\ref{t-Taylor2}, but to apply it, the invertible complex measures $\mu_i$  appearing there have to be determined, which in some cases (i.e. for some topologies) is easy or already done (namely for the \lq boundary cases\rq~$q=0$ and $q=n$), but becomes very involved for the case $q\in \{1,\ldots, n-1\}$ (the meaning of $q$ being explained in Section~\ref{S3}). For the treatment of the latter cases, we have to extend a result of Alexeev and Khartov~\cite[Thm.~3.2]{AlexeevKhartov23} on the invertibility of characteristic functions of discrete complex measures on $\bR^n$ to a setting where invertibility of  Banach algebra-valued generalized  Fourier series are considered (under certain conditions). For that purpose, we derive some  results for Banach algebra-valued functions which we believe to be interesting in their own right, such as the existence of a distinguished logarithm of such functions. Let us  specify the structure of the paper in more detail. After stating the main results in Section~\ref{S2},
we present some notation and preliminaries in Section~\ref{S-prelim}, while in Section~\ref{S3} we  give the general plan for the proof of the main results and determine the form of the complex measures $\mu_i$ appearing in Theorem~\ref{t-Taylor2}, however without characterising invertibility of them. Sections~\ref{S4}--\ref{S8} are concerned with invertibility of these complex measures. In Section~\ref{S4} we treat the case of absolutely continuous complex measures plus a point mass (corresponding to $q=n$), while in Section~\ref{S5} Alexeev and Khartov's result on discrete complex measures, in combination with a result of Taylor~\cite{T73}, is cited (corresponding to the case $q=0$). Section~\ref{S6} contains all results we develop for Banach algebra-valued functions, including the aforementioned generalisation of Alexeev and Khartov's result to Banach algebra-valued generalized Fourier series, while in Section~\ref{S7} we apply these results for the treatment of the intermediate case $q\in \{1,\ldots, n-1\}$. The final proof of the main results follows in Section~\ref{S8}. Finally, in Section~\ref{S9} we present an application of our results to divisibility properties of invertible complex measures.

\section{Statement of the main results} \label{S2}

The following is our first main result, which directly extends Theorem~\ref{t-Taylor1} to higher dimensions. In its statement and everywhere else in the paper, the product measure of two complex measures $\rho_1$ and $\rho_2$ (or $\sigma$-finite positive measures) will be denoted by $\rho_1 \otimes \rho_2$, and by $U(\rho)$ we denote the image measure of a  complex measure or positive measure $\rho \in M(\bR^n)$ under the mapping $\bR^n \to \bR^n$, $x\mapsto Ux$, induced by a matrix $U\in \bR^{n\times n}$.

\begin{theorem}[Characterisation of invertible complex measures via factorisation] \label{t-main1}
Let $n\in \bN$. Denote by $\sigma \in M(\bR)$ the finite signed measure on $\bR$ with characteristic function $\widehat{\sigma}(z) = (1+\ri z)/(1-\ri z)$, i.e.~$\sigma (\di s) = 2 \re^{-s} \one_{(0,\infty)}(s) \di s - \delta_0^1 (\di s)$. Let $n\in \bN$. Then a complex measure $\mu \in M(\bR^n)$ is invertible if and only if there are $\nu\in M(\bR^n)$, $\gamma \in \bR^n$,  $p\in \bN_0$, integers $m_1,\ldots, m_p$ and orthogonal matrices $U_1,\ldots, U_p \in O(n)$ such that
    \begin{equation} \label{eq-repr1}
    \mu = \delta_{\gamma}^n \ast U_1  (\sigma^{\ast m_1} \otimes \delta_0^{n-1}) \ast \ldots \ast
    U_p  (\sigma^{\ast m_p} \otimes \delta_0^{n-1}) \ast \exp(\nu).
    \end{equation}
\end{theorem}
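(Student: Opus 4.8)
The sufficiency is the routine direction, and I would dispatch it by checking that every factor in \eqref{eq-repr1} is a unit of the commutative Banach algebra $M(\bR^n)$. The Dirac factor $\delta_\gamma^n$ has inverse $\delta_{-\gamma}^n$, and $\exp(\nu)$ has inverse $\exp(-\nu)$, as recalled in the introduction. Since $\sigma$ is invertible in $M(\bR)$ with the explicit inverse given in the statement, each power $\sigma^{\ast m_j}$ is invertible in $M(\bR)$; moreover $\rho \mapsto \rho \otimes \delta_0^{n-1}$ is a unital algebra homomorphism $M(\bR) \to M(\bR^n)$ (as $(\rho_1\otimes\delta_0^{n-1})\ast(\rho_2\otimes\delta_0^{n-1}) = (\rho_1\ast\rho_2)\otimes\delta_0^{n-1}$ and $\delta_0^1\otimes\delta_0^{n-1}=\delta_0^n$), and the pushforward $U_j(\cdot)$ under the bijection $x\mapsto U_jx$ is an algebra automorphism of $M(\bR^n)$ fixing $\delta_0^n$. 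Hence each $U_j(\sigma^{\ast m_j}\otimes\delta_0^{n-1})$ is invertible, and a product of units is a unit, so $\mu$ is invertible.

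For necessity I would invoke Taylor's structural Theorem~\ref{t-Taylor2}, which already writes an invertible $\mu$ as $\mu_1 \ast \cdots \ast \mu_p \ast \exp(\nu)$ with each $\mu_i$ invertible and of the form $\alpha_i\delta_0^n + \widetilde{\mu_i}|_{\cB^n}$, where $\widetilde{\mu_i}$ is absolutely continuous with respect to the Haar measure of a refined group topology $\tau_i$. The whole problem thus reduces to identifying these building blocks. The first step is to classify the admissible topologies: I would show that every topology on $\bR^n$ stronger than the euclidean one that still turns $(\bR^n,+)$ into an LCA group is, after an orthogonal change of coordinates, the product of the discrete topology on a subspace $V$ and the euclidean topology on its orthogonal complement $W$, with Haar measure the product of counting measure on $V$ and Lebesgue measure on $W$. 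Writing $q := \dim W$ for the dimension of the euclidean part, the absolute-continuity requirement then forces $\widetilde{\mu_i}$ to be concentrated on countably many $q$-dimensional affine copies of $W$, carrying a Lebesgue-density on each.

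The second step is to pin down each $\mu_i$ according to $q$. For $q=0$ the measure $\mu_i$ is discrete, and I would combine Alexeev and Khartov's characterisation of invertible discrete complex measures with Taylor's discrete-group result to write $\mu_i = \delta_{\gamma_i}^n \ast \exp(\nu_i)$. For $q=n$ (absolutely continuous plus a point mass) the Riemann--Lebesgue lemma gives $\widehat{\mu_i}(z)\to\alpha_i\neq0$ at infinity; when $n\geq2$ the complement of a large ball is connected, a global distinguished logarithm of $\widehat{\mu_i}$ exists, and again $\mu_i = \delta_{\gamma_i}^n \ast \exp(\nu_i)$, whereas in the one-dimensional euclidean situation a nonzero winding number at $\pm\infty$ can survive and is exactly absorbed by a factor $\sigma^{\ast m_i}$. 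The intermediate range $q\in\{1,\ldots,n-1\}$ is where the real work lies: here $\widehat{\mu_i}$ is a generalised Fourier series in the $V$-directions whose coefficients are characteristic functions of complex measures on $W$, so it takes values in a commutative Banach algebra, and I would apply the Banach-algebra-valued extension of Alexeev--Khartov, together with the existence of a distinguished logarithm for such functions, to conclude that the only obstruction beyond a shift and an exponential is an integer winding number along a single one-dimensional euclidean direction, realised by one factor $U_i(\sigma^{\ast m_i}\otimes\delta_0^{n-1})$ (the winding being necessarily trivial once $q\geq2$).

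The final step is assembly. Since $M(\bR^n)$ is commutative I would gather the resulting factors, collecting the Dirac shifts into a single $\delta_\gamma^n$ with $\gamma=\sum_i\gamma_i$, the $\sigma$-factors into a product $U_1(\sigma^{\ast m_1}\otimes\delta_0^{n-1})\ast\cdots\ast U_p(\sigma^{\ast m_p}\otimes\delta_0^{n-1})$ (after discarding trivial factors and relabelling), and all the exponentials $\exp(\nu_i)$ together with the original $\exp(\nu)$ into one exponential $\exp(\nu+\sum_i\nu_i)$, which yields \eqref{eq-repr1}. The main obstacle is clearly the intermediate case: transporting the scalar winding-number and distinguished-logarithm theory to functions valued in a semisimple commutative unital Banach algebra with connected Gelfand space, and verifying that the surviving obstruction is an honest integer that can be peeled off by a single $\sigma$-type factor. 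Establishing the existence and good behaviour of the distinguished logarithm in that Banach-algebra setting is the crux of the whole argument.
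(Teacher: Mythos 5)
Your proposal follows essentially the same route as the paper: sufficiency by observing that every factor in \eqref{eq-repr1} is a unit of $M(\bR^n)$, and necessity by feeding Taylor's Theorem~\ref{t-Taylor2} through Hewitt's classification of the admissible group topologies and then characterising the invertible building blocks case by case in $q$ (discrete, absolutely continuous plus an atom, and the intermediate case treated via Banach-algebra-valued generalised Fourier series), exactly as in Proposition~\ref{p-Taylor3} and Theorems~\ref{t-Khartov}, \ref{t-delta-lebesgue1} and \ref{t-allg-q}, before reassembling the factors by commutativity. You also correctly single out the Banach-algebra-valued distinguished logarithm and the extension of the Alexeev--Khartov criterion as the technical crux, so nothing essential is missing from your outline.
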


It is remarkable that the structure of invertible complex measures in higher dimensions turns out to be not much more complicated than that of invertible complex measures on $\bR$. Indeed, the $1$-dimensional signed measure $\sigma$ also appeared in Theorem~\ref{t-Taylor1}, and $U_i(\sigma^{\ast m_i} \otimes \delta_0^{n-1})$ embeds the signed measure $\sigma^{\ast m_i}$ into the one-dimensional line $U_i(\bR \times \{0_{n-1}\}) \subset \bR^n$. Observe that when $n=1$, there are only two orthogonal matrices on $\bR$, namely $(1) \in \bR^{1\times 1}$ and $(-1) \in \bR^{1\times 1}$, and since the image measure of $\sigma^{\ast m}$ under the orthogonal matrix $(-1)$ is just $\sigma^{\ast (-m)}$, Theorem~\ref{t-main1} reduces to Theorem~\ref{t-Taylor1} when $n=1$.

When $\mu \in M(\bR^n)$ is not generally complex-valued but even real-valued (i.e.~a finite signed measure on $(\bR^n,\cB^n)$), one is interested in a representation of the invertible $\mu$ with a signed rather than complex measure $\nu$ in \eqref{eq-repr1}. Such a representation can be achieved, if additionally the factor $\mu(\bR^n)$ is allowed:

\begin{corollary}[Characterisation of invertible finite signed measures via factorisation] \label{c-main2}
Let $\sigma$ and $n\in \bN$ be as in Theorem~\ref{t-main1} and $\mu\in M(\bR^n)$ be a finite signed measure, different from the zero-measure. Then $\mu$ is invertible if and only there are a finite signed measure $\nu\in M(\bR^n)$, $\gamma \in \bR^n$,  $p\in \bN_0$, integers $m_1,\ldots, m_p$ and orthogonal matrices $U_1,\ldots, U_p \in O(n)$ such that
    \begin{equation*} \label{eq-repr2}
    \mu = \mu(\bR^n) \, \delta_{\gamma}^n \ast U_1  (\sigma^{\ast m_1} \otimes \delta_0^{n-1}) \ast \ldots \ast
    U_p  (\sigma^{\ast m_p} \otimes \delta_0^{n-1}) \ast \exp(\nu).
    \end{equation*}
The inverse  of $\mu$ is then a finite signed measure as well.
\end{corollary}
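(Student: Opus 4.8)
The plan is to derive Corollary~\ref{c-main2} from Theorem~\ref{t-main1}. Since a finite signed measure is in particular a complex measure, invertibility of $\mu$ is equivalent (by Theorem~\ref{t-main1}) to a representation of the form~\eqref{eq-repr1} with a possibly \emph{complex} measure $\nu$. The whole task is therefore to show that, in the real-valued case, one can choose $\nu$ to be a \emph{signed} measure at the cost of introducing the scalar factor $\mu(\bR^n)$. The sufficiency direction is essentially immediate: if $\mu$ has the claimed form with signed $\nu$, then since $\sigma$, the shifts, and $\exp(\nu)$ are all real-valued (the exponential of a real measure being real because each convolution power $\nu^{\ast j}$ is signed), the right-hand side is a real scalar multiple of a real measure, hence signed, and it is invertible by the same exponential/factor argument as in the introduction together with invertibility of $\sigma^{\ast m_i}$.

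For necessity, first I would normalise. Evaluating~\eqref{eq-repr1} at the total mass functional $\rho\mapsto\rho(\bR^n)=\widehat{\rho}(0)$, a multiplicative character on $M(\bR^n)$, and using $\widehat{\sigma}(0)=1$, $\widehat{\delta_\gamma^n}(0)=1$, gives $\mu(\bR^n)=\exp(\nu(\bR^n))\neq 0$; in particular $\mu(\bR^n)$ is a nonzero real number. Dividing through, the measure $\mu_0:=\mu(\bR^n)^{-1}\mu$ is a real signed measure with $\mu_0(\bR^n)=1$ that is still invertible, and it suffices to produce for $\mu_0$ a representation of the form~\eqref{eq-repr1} with signed $\nu$ and with $\exp(\nu)$ having total mass $1$, i.e.\ $\nu(\bR^n)=0$.

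The key step is to upgrade the complex measure $\nu$ to a signed one. I would exploit the reality of $\mu_0$ via the reflection map $R\colon x\mapsto -x$ and complex conjugation. For a signed measure $\rho$ one has $\overline{\widehat{\rho}(z)}=\widehat{R(\rho)}(z)$, so a real signed measure is characterised by the symmetry $\overline{\widehat{\mu_0}(z)}=\widehat{\mu_0}(-z)$. Writing out the representation~\eqref{eq-repr1} for $\mu_0$ and applying this symmetry, the factors $U_i(\sigma^{\ast m_i}\otimes\delta_0^{n-1})$ and the Dirac shift transform into factors of the same type (using $\overline{\widehat{\sigma}(z)}=\widehat{\sigma}(-z)$, which holds since $\sigma$ is real), so that comparing the two representations forces $\exp(\nu)$ to be real, i.e.\ $\exp(\overline{R(\nu)})=\exp(\nu)$ in $M(\bR^n)$. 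From here I would pass to a distinguished logarithm: since $\exp(\nu)$ is real and invertible with total mass $1$, and the map $\exp$ is a local homeomorphism near $\delta_0^n$, one argues that the symmetric part $\tfrac12(\nu+\overline{R(\nu)})$ is itself a signed measure whose exponential agrees with $\exp(\nu)$ up to a factor that can be absorbed. Concretely, replacing $\nu$ by its symmetrised, real version $\nu':=\tfrac12(\nu+\overline{R(\nu)})$ yields a signed measure with $\exp(\nu')=\exp(\nu)$, because the antisymmetric imaginary part must vanish by uniqueness of the representation. Reassembling gives the desired factorisation for $\mu_0$, and multiplying back by $\mu(\bR^n)$ completes necessity.

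The main obstacle I anticipate is the last step, namely rigorously concluding that $\exp(\nu)$ being real allows one to choose the exponent $\nu$ itself real. The subtlety is that $\exp$ is not globally injective on $M(\bR^n)$, so one cannot simply ``take logarithms''; instead one must use a uniqueness-of-logarithm statement for the invertible element $\exp(\nu)$, analogous to the distinguished-logarithm machinery the paper develops in Section~\ref{S6}. The cleanest route is to show that the two real-valued representations of $\mu_0$ obtained above must coincide factor-by-factor, and that the only freedom in the exponent lies in adding something whose exponential is trivial; ruling out a genuinely complex such ambiguity, while respecting the constraint $\nu(\bR^n)=0$, is precisely where the real work lies. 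Once that is established, the final claim that the inverse of $\mu$ is signed follows at once, since $\mu^{-1}=\mu(\bR^n)^{-1}\delta_{-\gamma}^n\ast U_1(\sigma^{\ast(-m_1)}\otimes\delta_0^{n-1})\ast\ldots\ast U_p(\sigma^{\ast(-m_p)}\otimes\delta_0^{n-1})\ast\exp(-\nu)$ is again a real scalar multiple of a product of real measures.
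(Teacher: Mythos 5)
Your overall architecture is the same as the paper's: reduce to $\mu(\bR^n)=1$, observe that $\exp(\nu)=\delta_{-\gamma}^n\ast U_1(\sigma^{\ast(-m_1)}\otimes\delta_0^{n-1})\ast\ldots\ast\mu$ is then a real measure of total mass $1$, and upgrade the complex exponent $\nu$ to a signed one. The ``if'' direction and the normalisation are fine. But the decisive step is exactly the one you leave open (``precisely where the real work lies''), and the sketch you give for it is flawed in two ways. First, the symmetrisation $\nu':=\tfrac12(\nu+\overline{R(\nu)})$ is the wrong one: since $\widehat{\overline{R(\nu)}}=\overline{\widehat{\nu}}$, this $\nu'$ is the Hermitian part of $\nu$ (it has real \emph{characteristic function}) and need not be a real measure at all — take $\nu=\ri\delta_1^1$, for which $\tfrac12(\nu+\overline{R(\nu)})=\tfrac{\ri}{2}(\delta_1^1-\delta_{-1}^1)$ is purely imaginary. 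Reality of a measure $\rho$ is $\overline{\rho}=\rho$, so the correct candidate is $\Real\nu=\tfrac12(\nu+\overline{\nu})$, and accordingly ``$\exp(\nu)$ real'' means $\exp(\overline{\nu})=\exp(\nu)$, not $\exp(\overline{R(\nu)})=\exp(\nu)$. Second, the claim that ``the antisymmetric imaginary part must vanish by uniqueness'' is false as stated: from $\exp(\widehat{\nu})=\exp(\widehat{\overline{\nu}})$ one only gets, by continuity and connectedness of $\bR^n$ and injectivity of the Fourier transform, that $\nu-\overline{\nu}=2\pi\ri k\,\delta_0^n$ for some $k\in\bZ$, i.e.\ $\I\nu=\pi k\,\delta_0^n$, whence $\exp(\nu)=(-1)^k\exp(\Real\nu)$. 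Ruling out odd $k$ requires using that the total mass $1$ of $\exp(\nu)$ is \emph{positive}; without that sign argument the conclusion fails (e.g.\ $\exp(\ri\pi\delta_0^n)=-\delta_0^n$ is real but its exponent cannot be taken real).

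The paper closes this gap differently: it reads off from \eqref{eq-exponential2} that $\exp(\nu)\in\CLK$ with complex L\'evy type measure $\nu_{|\cB_0^n}$, invokes the uniqueness of the $\CLK$ representation together with the fact that a \emph{signed} measure in $\CLK$ has a real L\'evy type measure (Remark~\ref{r-qid-uniqueness}~(b),(c)), concluding that $\nu_{|\cB_0^n}$ is real, and then handles the remaining atom at $0$ by exactly the $2\pi\ri k$ argument above, using $(\exp\nu)(\bR^n)=1$. Either route works, but your write-up contains neither: you would need to replace $\overline{R(\nu)}$ by $\overline{\nu}$ throughout, and supply the $2\pi\ri k\,\delta_0^n$ analysis together with the positivity of $\mu(\bR^n)^{-1}\mu(\bR^n)=1$ to dispose of the sign ambiguity.
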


While Theorem~\ref{t-main1} gives a characterisation of invertible complex measures in terms of a factorisation, one is often interested in the characteristic function of invertible complex measures. In particular, when considering quasi-infinitely divisible probability distributions, where our original motivation stems from, then characteristic functions are the key ingredient for their study. For the corresponding characterisation, recall that a \emph{L\'evy measure} on $\bR^n$ is a  (positive) measure $\rho$ on $(\bR^n,\cB^n)$ such that $\rho(\{0\}) = 0$ and $\int_{\bR^n} \min(1,|x|^2) \, \rho(\di x) < \infty$ (see~\cite[Sect.~8]{Sato2013} or~\cite[Chap.~9]{BrockLind2024} for more information on L\'evy measures). We will mostly be interested in the L\'evy measures that do not only integrate $\min(1,|x|^2)$, but even $\min(1,|x|)$.
 We will however need complex L\'evy type measures, and in particular those which integrate $\min(1,|x|)$. They are defined as follows:

\begin{definition}(see  \cite[Def.~3.1]{B19}, \cite[Def.~2.1]{BKL21}, \cite[Def.~2.2]{LPS2018}) \label{d-quasi}
Let $\cB^n_0 := \{ B \in \cB^n : \exists r > 0: B \cap \{ x \in \bR^n : |x|\leq r\} = \emptyset\}$, the system of all Borel sets in $\bR^n$ that are bounded away from zero. Then a set-function $\nu:\cB_0^n \to \bC$ is called a \emph{complex L\'evy type measure} if there are four L\'evy measures $\rho_i:\cB^n \to [0,\infty]$, $i=1,\ldots, 4$, such that
$$\nu(B) = \rho_1(B) - \rho_2(B) + \ri (\rho_3(B) - \rho_4(B)) \quad \forall\; B \in \cB^n_0.$$
(Observe that $\rho_i(B) < \infty$ for all $B\in \cB^n_0$). If the $\rho_i$ can be chosen such that even $\int_{\bR^n} \min(1,|x|) \, \rho_i(\di x) < \infty$ for all $i=1,\ldots, 4$, we shall call  $\nu$ a \emph{complex L\'evy type measure integrating $\min(1,|x|)$}. For such $\nu$ (integrating $\min(1,|x|)$) and a given $\cB^n$-measurable function $f:\bR^n \to \bC$ satisfying $|f(x)| \leq C \min (1,|x|)$ for all $x\in \bR^n$ for some finite constant~$C$ we then define
\begin{align}
 & {\int_{\bR^n} f(x) \, \nu(\di x)} \label{eq-def-int}\\
 := & \int_{\bR^n} f(x) \, \rho_1(\di x) - \int_{\bR^n} f(x) \, \rho_2(\di x) + \ri \left( \int_{\bR^n} f(x) \, \rho_3(\di x) - \int_{\bR^n} f(x) \, \rho_4(\di x) \right). \nonumber
\end{align}
It is easy to see that the value in \eqref{eq-def-int} depends only on $\nu = \rho_1-\rho_2 + \ri (\rho_3 - \rho_4)$ and not on the specific choices of the $\rho_i$, see~\cite{B19,BKL21,LPS2018}. A complex L\'evy type measure is called a \emph{quasi-L\'evy type measure} if its imaginary part is 0, i.e.~if  $\rho_3$ and $\rho_4$ in the representation above can be chosen to be the zero-measure. (We prefer the name \lq quasi-L\'evy type measure\rq~rather than \lq real L\'evy type measure\rq~since this notation is then in line with \cite{B19,BKL21,LPS2018}.) \emph{Quasi-L\'evy type measures integrating $\min(1,|x|)$} are defined similarly.
\end{definition}

The next theorem provides a L\'evy--Khintchine type representation with complex L\'evy type measures for the characteristic function
of invertible complex measures on $\bR^n$. Observe that for each fixed $z\in \bR^n$, the function $\bR^n \to \bC$, $x\mapsto \re^{\ri z^T x} - 1$, satisfies
$|f(x)| \leq C \min (1,|x|)$
for some constant~$C$, hence the integral appearing in \eqref{eq-main2} below is defined.

\begin{theorem}[Charact. of invertible complex measures on $\bR^n$ via char. functions] \label{t-main3}
(a)
Let $n\in \bN$ and $\mu \in M(\bR^n)$ with characteristic function $\widehat{\mu}$. Then $\mu$ is invertible if and only if there are  $\nu_0\in M(\bR^n)$, $\gamma \in \bR^n$, $c \in \bC\setminus\{0\}$, $p\in \bN_0$, integers $m_1,\ldots, m_p$ and points $u_1,\ldots, u_p$ on the $(n-1)$-dimensional unit sphere $S^{n-1}$ in $\bR^n$, such that
    \begin{equation}\label{eq-main2}
    \widehat{\mu}(z) = c \, \exp\left \{ \ri \gamma^T z + \int_{\bR^n} \left( \re^{\ri z^T x} -1\right) \, ( \nu_0 + \nu_1) (\di x)\right\}
    \end{equation}
    for all $z \in \bR^n$, where the quasi-L\'evy type measure $\nu_1$ integrating $\min(1,|x|)$ has polar representation
    \begin{equation}\label{eq-main3}
   \nu_1(B) := \int_{S^{n-1}}  \int_0^\infty \one_B(r\xi) r^{-1} \re^{-r} \, \di r \, \Lambda(\di \xi), \quad B \in \cB^n_0,
   \end{equation}
    with the finite signed measure $\Lambda$ on $S^{n-1}$ being given by
   \begin{equation} \label{eq-main4} \Lambda :=
    \sum_{j=1}^p \left( m_j \delta_{u_j}^{S^{n-1}} - m_j \delta_{-u_j}^{S^{n-1}}\right)
   \end{equation}
   ($\delta_{u_j}^{S^{n-1}}$ being the Dirac measure at $u_j \in S^{n-1}$, considered as a measure on $S^{n-1}$ with its Borel sets).\\
(b) The constant $c$ in \eqref{eq-main2} is equal to $\mu(\bR^n)$ which is necessarily different from 0. The complex measure $\nu_0\in M(\bR^n)$ may be chosen to satisfy $\nu_0(\{0\})  = 0$.\\
(c) When $\mu$ is an invertible finite signed measure on $\bR^n$, then
the  $\nu_0\in M(\bR^n)$ in \eqref{eq-main2} may be chosen to be a finite signed measure with $\nu_0(\{0\}) = 0$ (and $c=\mu(\bR^n)$ is then real-valued as well).\\
(d) Let $n\in \bN$. A function $f:\bR^n \to \bC$ is the characteristic function of some invertible complex measure $\mu\in M(\bR^n)$ if and only if it is of the form specified by the right-hand side of \eqref{eq-main2} with $\nu_1$ specified in \eqref{eq-main3} and \eqref{eq-main4}.
\end{theorem}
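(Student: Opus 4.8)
The plan is to derive parts (a) and (b) by pushing the factorisation \eqref{eq-repr1} of Theorem~\ref{t-main1} through the Fourier transform, which is an algebra homomorphism carrying $\ast$ to pointwise multiplication and satisfying $\widehat{\exp(\nu)}(z)=\exp(\widehat{\nu}(z))$ (because $\widehat{\nu^{\ast j}}=\widehat{\nu}^{\,j}$ and the exponential series converges in total variation norm). First I would record the transforms of the three kinds of factors. Clearly $\widehat{\delta_\gamma^n}(z)=\re^{\ri\gamma^T z}$, and $\widehat{\nu}(z)=\nu(\bR^n)+\int_{\bR^n}(\re^{\ri z^T x}-1)\,\nu(\di x)$. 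For the $\sigma$-factors I use $\widehat{U(\rho)}(z)=\widehat{\rho}(U^T z)$ together with $\widehat{\sigma^{\ast m}\otimes\delta_0^{n-1}}(w)=\widehat{\sigma}(w_1)^{m}$; since the first coordinate of $U_j^T z$ equals $u_j^T z$ for $u_j:=U_j e_1\in S^{n-1}$, this gives
\[
\widehat{U_j(\sigma^{\ast m_j}\otimes\delta_0^{n-1})}(z)=\left(\frac{1+\ri\,u_j^T z}{1-\ri\,u_j^T z}\right)^{m_j}.
\]

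The one genuinely computational step is the distinguished-logarithm (L\'evy--Khintchine) representation of $\widehat{\sigma}$. Since $(1-\ri z)^{-1}$ is the characteristic function of the $\mathrm{Gamma}(1,1)$ law, a Frullani-type integral yields $\log(1-\ri z)=-\int_0^\infty(\re^{\ri z r}-1)\,r^{-1}\re^{-r}\,\di r$ and, replacing $z$ by $-z$, $\log(1+\ri z)=-\int_0^\infty(\re^{-\ri z r}-1)\,r^{-1}\re^{-r}\,\di r$; subtracting,
\[
\log\frac{1+\ri z}{1-\ri z}=\int_0^\infty(\re^{\ri z r}-1)\,r^{-1}\re^{-r}\,\di r-\int_0^\infty(\re^{-\ri z r}-1)\,r^{-1}\re^{-r}\,\di r,\qquad z\in\bR,
\]
a genuine continuous logarithm as $\widehat{\sigma}$ has no real zeros. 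Substituting $u_j^T z$ and writing $\re^{\pm\ri r u_j^T z}=\re^{\ri z^T(r(\pm u_j))}$, the sum $\sum_{j=1}^p m_j\log\widehat{\sigma}(u_j^T z)$ collapses into $\int_{\bR^n}(\re^{\ri z^T x}-1)\,\nu_1(\di x)$ with $\nu_1,\Lambda$ exactly as in \eqref{eq-main3}--\eqref{eq-main4}; that $\nu_1$ is a quasi-L\'evy type measure integrating $\min(1,|x|)$ follows from the Jordan decomposition $\Lambda=\Lambda^+-\Lambda^-$ and $\int_0^\infty\min(1,r)\,r^{-1}\re^{-r}\,\di r<\infty$, so the integral in \eqref{eq-main2} is well defined in the sense of Definition~\ref{d-quasi}. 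Collecting the three factors and setting $\nu_0:=\nu-\nu(\{0\})\delta_0^n$ (which does not change $\int(\re^{\ri z^T x}-1)\,\nu(\di x)$, the integrand vanishing at $0$) produces \eqref{eq-main2} with $c=\re^{\nu(\bR^n)}$, and evaluating at $z=0$ identifies $c=\widehat{\mu}(0)=\mu(\bR^n)\neq0$, proving (a) and (b).

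For the converse of (a) and the ``if'' part of (d) I would reverse this construction. Given a right-hand side of \eqref{eq-main2}, choose for each $j$ an orthogonal matrix $U_j$ with $U_j e_1=u_j$ (extend $u_j$ to an orthonormal basis), set $\nu:=\nu_0+(\log c-\nu_0(\bR^n))\delta_0^n\in M(\bR^n)$ (any branch of $\log$, legitimate since $c\neq0$), and form the measure $\mu'$ given by the right-hand side of \eqref{eq-repr1}. Each factor is individually invertible, with inverses $\delta_{-\gamma}^n$, $U_j(\sigma^{\ast(-m_j)}\otimes\delta_0^{n-1})$ and $\exp(-\nu)$, so $\mu'$ is invertible; by the forward computation $\widehat{\mu'}$ equals the prescribed function, and injectivity of the Fourier transform gives $\mu=\mu'$ for (a) and exhibits $\mu'$ as the required invertible measure for (d). Part (c) follows by running the same argument from the signed factorisation of Corollary~\ref{c-main2} in place of \eqref{eq-repr1}: then $\nu$, hence $\nu_0$, is a finite signed measure, matching the two sides at $z=0$ forces $\nu(\bR^n)=0$, and $c=\mu(\bR^n)$ is real.

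Conceptually the theorem is Theorem~\ref{t-main1} transported through the Fourier transform, so I do not expect a hard estimate; the main obstacle is bookkeeping. Precisely, one must keep simultaneous track of the multiplicative constant and the atom at the origin so that both $c=\mu(\bR^n)$ and $\nu_0(\{0\})=0$ hold, and one must verify carefully that the assembled $\nu_1$ is a bona fide quasi-L\'evy type measure integrating $\min(1,|x|)$ with the polar form \eqref{eq-main3}--\eqref{eq-main4}. The only step needing actual computation, the Frullani/Gamma identity for $\log\widehat{\sigma}$, is one-dimensional and standard.
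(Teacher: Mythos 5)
Your proposal is correct and follows essentially the same route as the paper: both deduce the theorem from Theorem~\ref{t-main1} by transporting the factorisation \eqref{eq-repr1} through the Fourier transform, with the key ingredient being the L\'evy--Khintchine form of $\widehat{\sigma}$ (which the paper quotes as \eqref{eq-sigma-cf} and you rederive via the Frullani/Gamma identity) and the observation that $U(\tau_{e_1})=\tau_{U(e_1)}$ turns the $\sigma$-factors into the polar measure $\nu_1$. The only cosmetic difference is in part (c), where you evaluate at $z=0$ in the signed factorisation of Corollary~\ref{c-main2} rather than invoking the uniqueness statement of Remark~\ref{r-qid-uniqueness}~(c); both arguments are valid.
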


In the above theorem, $\nu_1$ can be interpreted as the product measure of the discrete, finitely carried signed measure $\Lambda$ on the unit sphere $S^{n-1}$ and the measure on $(0,\infty)$ with Lebesgue density $r\mapsto r^{-1} \re^{-r}$, where $\bR^{n}\setminus \{0\}$ is identified with $S^{n-1} \times (0,\infty)$ (since $\re^{\ri z^T 0}-1=0$, the integral over $\bR^n$ is the same as the integral over $\bR^n \setminus \{0\}$).  The set function $\nu:= \nu_0 + \nu_1 : \cB^n_0 \to \bC$ defines a complex L\'evy type measure, with $\nu_0$ corresponding to a general finite complex part and $\nu_1$ taking care of the non-finite part, which is seen to have a very special structure.

As mentioned before, quasi-infinitely divisible distributions are the probability distributions whose characteristic function allows a L\'evy--Khintchine type representation with a quasi-L\'evy type measure rather than a L\'evy measure, see \cite[Thm.~2.2]{BKL21} and \cite[Rem.~2.4]{LPS2018}. Allowing complex L\'evy type measures in that representation does not enlarge the class of probability measures with this representation, see~\cite[Thm.~3.2]{B19} and~\cite[Thm.~2.7]{BKL21}, but it does when instead of probability measures general  complex measures with L\'evy--Khintchine type representation are allowed. In the following, we restrict ourselves to L\'evy--Khintchine type representations with complex L\'evy type measures integrating $\min(1,|x|)$, which is sufficient for our purposes.

\begin{definition} \label{d-qid}
A complex measure $\mu \in M(\bR^d)$ is said to have a \emph{L\'evy--Khintchine type representation with no Gaussian component and with a complex L\'evy type measure integrating $\min(1,|x|)$}, if  there are $c\in \bC \setminus \{0\}$, $\gamma \in \bR^n$ and a complex L\'evy type measure $\nu$ integrating $\min(1,|x|)$ such that
\begin{equation} \label{eq-CLK}
\widehat{\mu}(z) = c \, \exp \left( \ri \gamma^T z  + \int_{\bR^n}
\left( \re^{\ri z^T x } - 1 \right) \, \nu(\di x) \right)
\end{equation}
for all $z\in \bR^n$ (observe that the integrand can be bounded by a constant times $\min(1,|x|)$, hence the integral with respect to $\nu$ is well-defined).  The class of all $\mu \in M(\bR^n)$ with such a representation will be denoted by $\CLK$.
\end{definition}

\begin{remark} \label{r-qid-uniqueness}
(a) The subscript 0 in $\CLK$ gives reference to the fact that the complex L\'evy type measure integrates $\min(1,|x|)$, which for L\'evy measures corresponds to the fact that the extended truncation function $c_0(x):=0$ in the L\'evy--Khintchine representation can be used, see \cite[Prop.~9.18]{BrockLind2024} or \cite[Eq.~(8.7)]{Sato2013}.\\
(b) If $\mu \in \CLK$, then
the constant $c$ in \eqref{eq-CLK}  is unique and given by $c=\widehat{\mu}(0) = \mu(\bR^n) \neq 0$. However, also $\gamma$ and $\nu$ are unique. The latter fact follows from the proofs of Proposition~9.8 in \cite{BrockLind2024} or Theorem~8.1~(ii) in \cite{Sato2013}, which easily carry  over to complex L\'evy type measures integrating $\min(1,|x|)$.\\
(c) When $\mu \in \CLK$ is a probability measure, then $\nu$ is necessarily real-valued, i.e. a quasi-L\'evy type measure integrating $\min(1,|x|)$, see \cite[Thm.~3.2]{B19} and~\cite[Thm.~2.7]{BKL21}. The proof given there carries over to finite signed measures in $\CLK$ without change.\\
(d) A probability measure $\mu\in \CLK$ is quasi-infinitely divisible.\\
(e) Cuppens~\cite[Sect.~4.3]{Cuppens1975} calls probability measures on $\bR^d$ having a representation as in \eqref{eq-CLK} with a \emph{finite} signed measure $\nu$ (and some additional non-negative quadratic form being present) to have a De-Finetti representation, and he obtains several results for such probability measures.
\end{remark}

The following result is immediate from Theorem~\ref{t-main3}:

\begin{corollary} \label{c-QID}
Let $\mu \in M(\bR^n)$ be invertible. Then $\mu \in \CLK$.
\end{corollary}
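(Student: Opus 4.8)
The plan is to read off the representation directly from Theorem~\ref{t-main3}(a) and then check that it already has the shape required by Definition~\ref{d-qid}. Concretely, given an invertible $\mu \in M(\bR^n)$, Theorem~\ref{t-main3}(a) produces $c\in\bC\setminus\{0\}$, $\gamma\in\bR^n$, a finite complex measure $\nu_0\in M(\bR^n)$ and a quasi-L\'evy type measure $\nu_1$ integrating $\min(1,|x|)$ (of the special polar form \eqref{eq-main3}--\eqref{eq-main4}) for which $\widehat\mu$ is given by \eqref{eq-main2}. Comparing \eqref{eq-main2} with \eqref{eq-CLK}, the only thing left to verify is that $\nu:=\nu_0+\nu_1$ is a complex L\'evy type measure integrating $\min(1,|x|)$ in the sense of Definition~\ref{d-quasi}; once this is established, \eqref{eq-main2} is literally an instance of \eqref{eq-CLK} with the same $c\neq 0$ and $\gamma$, so $\mu\in\CLK$.

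First I would invoke Theorem~\ref{t-main3}(b) to assume without loss of generality that $\nu_0(\{0\})=0$, which is needed because L\'evy measures are required to give no mass to the origin. Writing $\nu_0=(\rho_1-\rho_2)+\ri(\rho_3-\rho_4)$ via the Jordan decompositions of its real and imaginary parts, each $\rho_i$ is a finite positive measure with $\rho_i(\{0\})=0$; since $\int_{\bR^n}\min(1,|x|)\,\rho_i(\di x)\le \rho_i(\bR^n)<\infty$, each $\rho_i$ is a L\'evy measure integrating $\min(1,|x|)$, so $\nu_0$ restricted to $\cB^n_0$ is a complex L\'evy type measure integrating $\min(1,|x|)$.

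Next, $\nu_1$ is by construction a quasi-L\'evy type measure integrating $\min(1,|x|)$, hence a complex L\'evy type measure integrating $\min(1,|x|)$ with vanishing imaginary part. To conclude I would observe that the class of complex L\'evy type measures integrating $\min(1,|x|)$ is closed under addition: if $\nu_0=(\rho_1-\rho_2)+\ri(\rho_3-\rho_4)$ and $\nu_1=\rho_1'-\rho_2'$ with all $\rho_i,\rho_j'$ being L\'evy measures integrating $\min(1,|x|)$, then $\nu_0+\nu_1=(\rho_1+\rho_1')-(\rho_2+\rho_2')+\ri(\rho_3-\rho_4)$, and sums of L\'evy measures integrating $\min(1,|x|)$ are again of that kind. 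Thus $\nu=\nu_0+\nu_1$ is a complex L\'evy type measure integrating $\min(1,|x|)$, and by the additivity of the integral in \eqref{eq-def-int} the integral in \eqref{eq-main2} coincides with $\int_{\bR^n}(\re^{\ri z^Tx}-1)\,\nu(\di x)$, completing the identification with \eqref{eq-CLK}.

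There is essentially no hard obstacle here: the corollary is a matching-of-definitions argument, and the only points requiring care are the bookkeeping ones just mentioned, namely discarding the possible atom of $\nu_0$ at the origin (handled by Theorem~\ref{t-main3}(b)) and checking closure of the class under the sum $\nu_0+\nu_1$. If anything, the one place to be slightly attentive is confirming that the integral symbol in \eqref{eq-main2} is interpreted exactly as in Definition~\ref{d-quasi}, so that the additive splitting over $\nu_0$ and $\nu_1$ is legitimate; this is guaranteed by the representation-independence of the integral in \eqref{eq-def-int}.
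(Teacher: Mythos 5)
Your proof is correct and follows exactly the route the paper intends: the paper simply declares the corollary immediate from Theorem~\ref{t-main3}, and your argument spells out the routine verification that $\nu_0+\nu_1$ is a complex L\'evy type measure integrating $\min(1,|x|)$ so that \eqref{eq-main2} is an instance of \eqref{eq-CLK}. The bookkeeping points you flag (removing the atom of $\nu_0$ at the origin and the additivity of the integral in \eqref{eq-def-int}) are handled correctly.
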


Sections~\ref{S3}--\ref{S8} will be dedicated to the proof of the main results, i.e. to Theorem~\ref{t-main1}, Corollary~\ref{c-main2} and Theorem~\ref{t-main3}.

\section{Preliminaries and general notation} \label{S-prelim}

We denote $\bN=\{1,2,\ldots \}$ and $\bN_0 = \bN \cup \{0\}$. The zero-element in $\bR^n$ (all elements viewed as column vectors) will be denoted by either $0$ or $0_n$. The euclidian norm of an element $x\in \bR^n$ is denoted by $|x|$, and the $(n-1)$-dimensional unit sphere in $\bR^n$ by $S^{n-1}$.
For $A\subset X$, the indicator function of the set $A$  is denote by $\one_A$, hence $\one_A(x) = 1$ if $x\in A$ and $\one_A(x) = 0$ for $x\in X\setminus A$. If $X$ is a topological space, we denote the system of its Borel sets, i.e.~the $\sigma$-algebra generated by the open sets,  by $\cB(X)$. In particular, when $\tau$ is a topology on $\bR^n$, we write $\cB(\bR^n_\tau)$, whereas when we intend the euclidian topology we usually omit explicit mentioning of the topology. Hence $\cB^n = \cB(\bR^n)$. The discrete topology on a set $X$ is described by the power set of $X$, i.e.~the topology in which every subset of $X$ is open (and hence also closed). We denote the discrete topology by $X_d$, in particular $\bR^n_d$ is $\bR^n$ equipped with the discrete topology. The compact sets in $\bR^n_d$ are the finite subsets of $\bR^n$. If $X$ is a Hausdorff topological space, then a (positive) measure $\mu:\cB(X) \to [0,\infty]$ is called \emph{regular} if $\mu(K) < \infty$ for each compact subset $K\subset X$, if for every $A\in \cB(X)$ we have $\mu(A) = \inf \{\mu(U): U \supset A \mbox{ and $U$ open}\}$ and if for every open $U\subset X$ we have $\mu(U) = \sup \{ \mu(K) : K \subset U \mbox{ and $K$ compact}\}$. A complex or finite signed measure is called \emph{regular} if its total variation is regular. On $\bR^n$ equipped with the euclidian topology, every finite measure on $\cB^n$ is regular (e.g.~\cite[Prop.~7.2.3]{Cohn2013}), hence the elements of $M(\bR^n)$ are automatically regular. If $X$ and $Y$ are locally compact Hausdorff spaces, we denote by $C(X,Y)$ the set of all continuous functions from $X$ to $Y$, while if $Y=\bC$ with the euclidian topology, we usually simply write $C(X)$. We denote by $C_c(X)$ those elements in $C(X)$ that have compact support and by $C_0(X)$ those elements in $C(X)$ that vanish at infinity.

A \emph{locally compact abelian group} is an abelian group $G$ (the group operation written as addition) which is also a locally compact Hausdorff space, such that the group operations $G\times G \to G$, $(x,y) \to x+y$ (on the product space) and $G\to G$, $x\mapsto -x$, are continuous. A (positive) measure $H:\cB(G) \to [0,\infty]$ is called \emph{Haar measure} on $G$, if it is non-zero, regular and translation invariant, the latter meaning that $H(A+x) = H(A)$ for all $A\in \cB(G)$ and $x\in G$ (with $A+x := \{a+x : a \in A\}$). On every locally compact abelian group there exists a Haar measure $H$, which is unique up to multiplication by positive constants (e.g.~\cite[Thms.~9.2.2 and 9.2.6]{Cohn2013}), which is why we usually simple speak of \emph{the Haar measure}. On $\bR^n$, the Haar measure is Lebesgue measure,  henceforth denoted by $\lambda^n$, while on $\bR^n_d$ the Haar measure is the counting measure, which assigns to each $A\subset X$ its number of elements (see \cite[Ex.~9.2.1]{Cohn2013}), denoted henceforth by $\zeta^n$. Observe that the counting measure $\zeta^n$ is not $\sigma$-finite. For a given countable set $C\subset \bR^n$, we denote by $\zeta^n_C$ the measure on $\cB^n$ (!) that counts the number of elements of $A\cap C$ for $A\in \cB^n$, i.e. $\zeta^n_C = \sum_{c\in C} \delta_c^n$ (on $(\bR^n,\cB^n)$). This measure is $\sigma$-finite. For a (positive) measure $\mu$ we denote by $L^1(\mu)$ the set of (equivalence classes of) $\mu$-integrable $\bC$-valued functions, while if $G$ is a locally compact abelian group we write $L^1(G)$ instead of $L^1(H)$, where $H$ is the Haar measure on $G$. In particular, $L^1(\bR^n)$ is the set of all $\cB_n$-measurable, Lebesgue-integrable functions on $\bR^n$, while $L^1(\bR^n_d)$ is the set of all functions $f:\bR^n \to \bC$ that are integrable with respect to $\zeta^n$, i.e. which vanish outside a countable set and whose values on the countable set are absolutely summable.

If a complex, positive or finite signed measure $\nu$ has a density $f$ with respect to a positive measure $\mu$, we write $\nu = f \mu$, and if $\mu=\lambda^n$ is the Lebesgue measure on $\bR^n$, we shall also write $\nu(\di s) = f(s) \lambda^n(\di s)$ or $\nu(\di s) = f(s) \, \di s$ rather than $\nu = f\lambda^n$, and similarly for integration with respect to Lebesgue measure. Image measures under a measurable mapping $T$ will be denoted by $T(\mu)$, and if $U \in \bR^{n\times n}$ is an orthogonal matrix (the collection of all orthogonal $(n\times n)$-matrices is denoted by $O(n)$), we also write $U(\mu)$ to indicate the image measure of $\mu$ under the induced mapping $\bR^n \to \bR^n $, $z\mapsto Uz$. The product measure of two $\sigma$-finite positive or complex measures $\rho_1$ and $\rho_2$ is denoted by $\rho_1 \otimes \rho_2$.

The characteristic function, or Fourier transform, of a complex measure $\mu \in M(\bR^n)$, was already defined in \eqref{eq-def-FT}. For $f\in L^1(\bR^n)$, the Fourier transform is denoted by $\cF f:\bR^n \to \bC$ and given by
$$\cF f(z) := \int_{\bR^n} \re^{\ri z^T x} \, \di x, \quad z \in \bR^n,$$
so that $\cF f$ is the characteristic function of $f \lambda^n$.
The exponential of $\nu\in M(\bR^n)$ was defined in \eqref{eq-exponential}. The characteristic function of $\exp(\nu)$ is given by
$$\widehat{\exp(\nu)}(z) = \sum_{j=0}^\infty \frac{1}{j!} (\widehat{\nu}(z))^j  = \exp (\widehat{\nu}(z)), \quad z \in \bR^n,$$
hence $\exp(\nu_1+\nu_2) = \exp(\nu_1) \ast \exp(\nu_2)$ for $\nu_1,\nu_2 \in M(\bR^n)$.
In particular, if $\nu \in M(\bR^n)$ we can rewrite the characteristic function of $\exp(\nu)$ as
\begin{equation} \label{eq-exponential2}
\widehat{\exp(\nu)}(z) = \re^{\nu(\bR^n)}
\exp \left\{ \int_{\bR^n} \left( \re^{\ri z^T x} - 1 \right) \, \nu(\di x) \right\}, z \in \bR^n,
\end{equation}
showing that $\exp(\nu) \in \CLK$ whenever $\nu\in M(\bR^n)$, and in that case the complex L\'evy type measure is finite and given by $\nu$ (restricted to $\cB^n_0$).  Conversely, if a function $f:\bR^n \to \bC$ is of the form
\begin{equation} \label{eq-exponential3}
f(z) = c \, \exp \left\{ \ri \gamma^T z + \int_{\bR^n} \left( \re^{\ri z^T x} - 1 \right) \, \nu(\di x) \right\}, \quad z \in \bR^n,
\end{equation}
for some $\nu\in M(\bR^n)$ (i.e.~for some \emph{finite} complex L\'evy type measure),  $c\in \bC \setminus \{0\}$ and $\gamma\in \bR^n$,
then it follows from \eqref{eq-exponential2} that $f$ is the characteristic function of some $\mu\in M(\bR^n)$, namely of $\delta^n_\gamma \ast \exp (\nu + \alpha \delta_0^n)$, where $\alpha$ is some complex number such that $\exp(\alpha) = c/\re^{\nu(\bR^n)}$. The complex measure $\mu$ is then necessarily in $\CLK$. Hence, complex measures in $\CLK$ with \emph{finite} complex L\'evy type measure are exactly those complex measures that can be written as $\delta_\gamma^n \ast \exp(\nu)$ for some $\nu\in M(\bR^n)$ and $\gamma \in \bR^n$.

Finally, we recall the distinguished logarithm. Let $f:\bR^n \to \bC$ be a continuous function such that $f(0) = 1$ and $f(z) \neq 0$ for all $z\in \bR^n$. Then there exists a unique continuous function $\psi:\bR^n \to \bC$ with $\psi(0) = 0$ such that $\exp(\psi(z)) = f(z)$ for all $z\in \bR^d$, see  \cite[Thm.~8.21]{BrockLind2024} or \cite[Lem.~7.6]{Sato2013}. The function $\psi$ is called the \emph{distinguished logarithm} of $f$. Consequently, when $g\in C(\bR^n)$ with $g(z) \neq 0$ for all $z\in \bR^n$, then we can write $g(z) = g(0) \exp (\psi(z))$, where $\psi$ is the distinguished logarithm of $g/g(0)$.

All definitions regarding Banach algebras will be given in Section~\ref{S6}.

\section{Plan for the proofs of the main results and determination of the topologies $\tau_i$ and the $\mu_i$ appearing in Theorem~\ref{t-Taylor2}} \label{S3}

Our main results will follow from Theorem~\ref{t-Taylor2}. For its application, we must
\begin{enumerate}
\item determine all topologies $\tau$ on $\bR^n$ that are at least as strong as the euclidian topology, such that $\bR_\tau^n$ remains a locally compact abelian group,
\item  identify the Haar measure $H=H_\tau$ for theses topologies,
\item find the form of general complex measures $\widetilde{\mu_i}_{|\cB^n}$, where $\widetilde{\mu_i}$ is absolutely continuous with respect to the corresponding Haar measure,
\item and finally characterise the invertible complex measures of the form $\alpha \delta_0^n+  \widetilde{\mu_i}_{|\cB^n}$ with $\widetilde{\mu_i}$ from (iii).
\end{enumerate}
In this section we deal with  steps (i) to (iii), while Sections~\ref{S4}--\ref{S7} are concerned with step (iv). Luckily, the answer to (i) has already been given by Hewitt~\cite[Thm.~2.2]{H64}. Accordingly, if $\tau$ is a topology on $\bR^n$, at least as strong as the euclidian topology, such that $\bR_\tau^n$ remains a locally compact abelian group, then there is some $q\in \{0,1,\ldots, n\}$ such that $\bR_\tau^n$ is topologically isomorphic to $\bR^q \times \bR^{n-q}_d$, where $\bR^q$ is equipped with the euclidian topology, $\bR^{n-q}_d$ with the discrete topology  and $\bR^q \times \bR^{n-q}_d$ carries the product topology. Here, two locally compact abelian groups $G$ and $G'$ are called \emph{topologically isomorphic}, if there is a group isomorphism $U:G\to G'$ which is also a homeomorphism, which is then called a \emph{topological isomorphism}. Since later we will need image measures, we are also interested in the form of these topological isomorphisms. The answer to this question can be derived from the proof of Theorem~2.2 in Hewitt~\cite{H64}. To do that, let $\bR^n_\tau$ be a locally compact abelian group with topology $\tau$ as least as strong as the euclidian topology. Since the answer is trivial if $\tau$ is the euclidian or discrete topology, we exclude this case for the moment.  Then it is shown in \cite[proof of Thm.~2.2]{H64} that there exists a linear subspace $D\subset \bR^n$ of dimension $q\in \{1,\ldots, n-1\}$ and a linear bijection $\beta:\bR^q \to D$, where $D$ is open (and simultaneously closed) in $\tau$, such that $\beta$ defines a topological isomorphism from $\bR^q$ (equipped with the euclidian topology) onto $D$ (equipped with the relative topology of $\tau$); in particular, the relative topology of $\tau$ on $D$ coincides with the euclidian topology on $D$. Now let $x_1,\ldots, x_q$ be an orthonormal basis of $D$ (with respect to the euclidian topology) and denote $y_1:= \beta^{-1}(x_1),\ldots, y_q:= \beta^{-1} (x_q)$. Let $\gamma:\bR^q \to \bR^q$ be a linear mapping defined by $\gamma(e_i) := y_i$, $i=1,\ldots, q$, where $e_i$ denotes the $i^{\rm th}$ unit vector in $\bR^q$. Then $\beta \circ \gamma:\bR^q \to D$ is a topological isomorphism with $\beta \circ \gamma(e_i) = x_i$ for all $i \in\{1,\ldots, q\}$. Denote by $W \subset \bR^n$ the orthogonal complement of $D$ in $\bR^n$ (with respect to the euclidian topology) and let $x_{q+1}, \ldots, x_n$ be an orthonormal basis for~$W$. Define the linear map $U:\bR^n \to \bR^n$ by $U(e_i) = x_i$, $i=1,\ldots, n$, where $e_i$ is the $i^{\rm th}$ unit vector in $\bR^n$ and $\bR^q$ is identified with $\bR^{q} \times \{0_{n-q}\} \subset \bR^n$ (having the first $q$ unit vectors). Then $U$ is described by an orthogonal matrix (which we also denote by~$U$). Denote by $\tau'$ the topology on $\bR^n$ induced by $U$ and $\tau$, i.e.~a set $A\subset \bR^n$ is open with respect to $\tau'$ if and only if it is the preimage under $U$ of an open set with respect to $\tau$. We show that $\tau'$ equals the product topology of $\bR^q$ and $\bR^{n-q}_d$. To this end, observe that every $A\subset \bR^n$, which is open with respect to $\tau$, can be written as $A = \bigcup_{x\in W} ((A-x)\cap D)) + x$. Since $\bR^n_\tau$ is a topological group and $D$ is open, also $(A-x)\cap D$ is open with respect to $\tau$. Hence each open $A$ in $\tau$ can be written as a union $A = \bigcup_{x\in W} (B_x+x)$ with $B_x\subset D$ open (with respect to the relative topology of $\tau$ on $D$, hence with respect to $\tau$, since $D$ is open). On the other hand, if $B_x \subset D$ is open, then $B_x + x$ is open since $\bR^n_\tau$ is a topological group, hence so are arbitrary unions of such sets. We conclude that $\{ B+x : B \subset D \, \, \mbox{open}, x \in W\}$ is a basis for~$\tau$.
Since the relative topology of $\tau$ on $D$ coincides with the euclidian topology on $D$,
we conclude that the system $\mathcal{A}'$ of all sets of the form $B'+x'$, where $B'$ is open in $\bR^q$ and $x'\in \bR^{n-q}$ (with $\bR^q$ identified with $\bR^q \times \{0_{n-q}\}$ and $\bR^{n-q}$ identified with $\{0_q\} \times \bR^{n-q}$), is a basis for $\tau'$. On the other hand, the set $\mathcal{A'}$ is easily seen to be a basis for the product (topological) space $\bR^q \times \bR^{n-q}_d$. Summarising, we have:

\begin{lemma} \label{l-Hewitt}
Let $\tau$ be a topology on $\bR^n$, at least as strong as the euclidian topology, such that $\bR^n_\tau$ remains a locally compact abelian group. Then there exist $q\in \{0,\ldots, n\}$ and an orthogonal matrix $U\in \bR^{n\times n}$ such that the induced linear map $U:\bR^n \to \bR^n$ is a topological isomorphism from $\bR^q \times \bR^{n-q}_d$ onto $\bR^n_\tau$.
\end{lemma}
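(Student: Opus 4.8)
The plan is to build on Hewitt's classification~\cite[Thm.~2.2]{H64}, which already tells us that any such $\bR^n_\tau$ is topologically isomorphic to $\bR^q \times \bR^{n-q}_d$ for some $q\in\{0,\ldots,n\}$; the only additional content of the lemma is that the isomorphism $U$ can be chosen to be \emph{orthogonal}. First I would dispose of the boundary cases $q=0$ and $q=n$, where $\tau$ is the discrete, respectively the euclidian, topology, so that $U$ may be taken to be the identity (or indeed any orthogonal matrix).

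For the genuinely mixed case $q\in\{1,\ldots,n-1\}$, the crucial geometric object supplied by the proof of Hewitt's theorem is a linear subspace $D\subset\bR^n$ of dimension $q$ that is simultaneously open and closed in $\tau$, and on which $\tau$ restricts to the euclidian topology. The idea is then to align the product decomposition with the euclidian orthogonal structure: choose an orthonormal basis $x_1,\ldots,x_q$ of $D$, extend it by an orthonormal basis $x_{q+1},\ldots,x_n$ of the orthogonal complement $W=D^\perp$, and let $U\in O(n)$ be the matrix sending the $i$-th standard unit vector $e_i$ to $x_i$. By construction $U$ is orthogonal, it maps $\bR^q\times\{0_{n-q}\}$ onto $D$ and $\{0_q\}\times\bR^{n-q}$ onto $W$, and it is a group automorphism of $\bR^n$.

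It then remains to check that $U$ carries the product topology of $\bR^q\times\bR^{n-q}_d$ to $\tau$, i.e.\ that the pulled-back topology $\tau'$ (declaring $A$ open iff $UA$ is $\tau$-open) equals the product topology. The key structural step is to show that
\[
\{\,B+x : B\subset D \text{ open in } \tau,\ x\in W\,\}
\]
is a basis for $\tau$. This should follow because $D$ is open and $\bR^n_\tau$ is a topological group: any $\tau$-open $A$ decomposes along the cosets of $W$ as $A=\bigcup_{x\in W}\big((A-x)\cap D\big)+x$, each slice $(A-x)\cap D$ being $\tau$-open, while conversely every translate $B+x$ with $B\subset D$ open is $\tau$-open by continuity of the group operation. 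Combining this with the fact that $\tau$ restricts to the euclidian topology on $D$ then identifies $\tau'$ with the basis consisting of all sets $B'+x'$, $B'$ euclidian-open in $\bR^q$ and $x'\in\bR^{n-q}$, which is exactly a basis for $\bR^q\times\bR^{n-q}_d$.

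I expect the main obstacle to be this coset-decomposition and basis step: one must be careful that openness and closedness of $D$ in $\tau$ are genuinely used (openness to split $A$ into $W$-translates of open subsets of $D$, and the group structure to transport openness between cosets), and that the relative topology on $D$ really is the euclidian one. Both of these are precisely the nontrivial outputs of Hewitt's argument, which I would cite rather than reprove; the remaining work is then the essentially bookkeeping verification that an orthogonal $U$ respecting the splitting $\bR^n=D\oplus W$ intertwines the two bases.
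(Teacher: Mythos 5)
Your proposal is correct and follows essentially the same route as the paper: invoke Hewitt's classification, extract from its proof the $q$-dimensional subspace $D$ that is $\tau$-open and $\tau$-closed and carries the euclidian relative topology, build an orthogonal $U$ from orthonormal bases of $D$ and $D^\perp$, and verify via the coset decomposition $A=\bigcup_{x\in W}\bigl((A-x)\cap D\bigr)+x$ that $\{B+x : B\subset D \text{ open},\ x\in W\}$ is a basis for $\tau$, matching the product topology under $U$. The only cosmetic difference is that the paper routes through an auxiliary linear map $\gamma$ to align Hewitt's bijection $\beta$ with the chosen orthonormal basis of $D$, whereas you define $U$ directly from the bases; the substance is identical.
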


For the determination of the Haar measures $H_{\tau}$ appearing in Theorem~\ref{t-Taylor2}, suppose first that $U$ appearing in Lemma~\ref{l-Hewitt} is the identity matrix, so that $\bR^n_\tau = \bR^q \times \bR^{n-q}_d$ for some $q\in \{0,\ldots, n\}$. If $q=n$, we know already that the Haar measure is equal to $n$-dimensional Lebesgue measure $\lambda^n$ on $\cB^n$, while when  $q=0$, we have the discrete topology, and the Haar measure is the counting measure $\zeta^n$. When $q\in \{1,\ldots, n-1\}$, then the natural candidate for the Haar measure on $\bR^q \times \bR^{n-q}_d$ is the product measure $\lambda^q \otimes \zeta^{n-q}$. This is indeed the case, but for proving it, one has to be careful, since the counting measure is not $\sigma$-finite and hence one has to rely on the theory of regular measures on locally compact abelian groups and one should clarify first what is meant by the \lq product measure\rq, when one factor is not $\sigma$-finite. All necessary facts that we need can be found in the book by Cohn~\cite[Ch.~7 and 9]{Cohn2013}. The Haar measures $\lambda^q$ and $\zeta^{n-q}$ on $\bR^q$ and $\bR^{n-q}_d$, respectively, are regular. As shown in \cite[Prop.~7.6.4]{Cohn2013},
\begin{align*}
I  : &  C_c(\bR^q \times \bR^{n-q}_d,\bR)  \to  \bR ,\\
  & f \mapsto \int_{\bR^q} \int_{\bR^{n-q}_d} f(x,y) \, \zeta^{n-q}(\di y) \, \lambda^q(\di x) =
 \int_{\bR^{n-q}_d} \int_{\bR^q} f(x,y) \, \lambda^q(\di x) \, \zeta^{n-q} (\di y),
 \end{align*}
defines a positive linear functional on $C_c (\bR^q \times \bR^{n-q}_d,\bR)$, and indeed the order of integration may be interchanged. By the Riesz representation theorem, there exists a regular measure  $H$ on $\bR^q \times \bR^{n-q}_d$ corresponding to $I$, which Cohn~\cite[p.~222]{Cohn2013} calls the \emph{regular Borel product} of $\lambda^q$ and $\zeta^{n-q}$. We denote it by $\lambda^q \times \zeta^{n-q}$. Since $\lambda^q$ and $\zeta^{n-q}$ are translation invariant, so is the linear functional $I$ and hence so is the corresponding measure $\lambda^q \times \zeta^{n-q}$, which is hence the Haar measure on $\bR^q \times \bR^{n-q}_d$.

So far we have carried out steps (i) and (ii) of our road map (at least when $U$ is the identity matrix). Next, we have to find the form of all absolutely continuous finite complex measures $\widetilde{\mu}$ with respect to $\lambda^q \times \zeta^{n-q}$, when restricted to $\cB^n$. When $q=n$, it is clear from the standard form of the Radon-Nikodym theorem (observe that $\lambda^n$ is $\sigma$-finite) that we are dealing with complex measures of the form $\widetilde{\mu} (\di s) = f(s) \, \lambda^n(\di s)$ with $f:\bR^n \to \bC$ being $\cB^n$-measurable and Lebesgue-integrable. When $q\in \{0,\ldots, n-1\}$, $\sigma$-finiteness is lost, but the Radon-Nikodym theorem still holds, in the sense that a regular  complex measure $\widetilde{\mu}$ (which lives on $\cB(\bR^q \times \bR^{n-q}_d)$, not just on $\cB^n$!) is absolutely continuous with respect to $\lambda^q \times \zeta^{n-q}$ if and only if there is some $f\in L^1(\bR^q \times \bR^{n-q}_d)$ such that $\widetilde{\mu}$ has density~$f$ with respect to $\lambda^q \times \zeta^{n-q}$; see~\cite[Prop.~7.3.10]{Cohn2013}. To gather more information about $f$, we observe from~\cite[Lem.~9.4.2]{Cohn2013} that there is a sequence $(K_k)_{k\in \bN}$ of compact subsets of $\bR^q \times \bR^{n-q}_d$ such that $f(x,y) = 0$ for all $(x,y) \notin \bigcup_{k=1}^\infty K_k$. Now if $K$ is a compact subset of $\bR^q \times \bR^{n-q}_d$, then the projection of $K$ onto $\bR^{n-q}_d$ is compact, hence is finite, since $\bR^{n-q}_d$ carries the discrete topology. It follows that $K$ is contained in $\bR^{q} \times F$ for some finite set $F\subset \bR^{n-q}$, hence $f$ vanishes outside $\bR^{q} \times C$ for some countable set $C$. Denote by $f^c:\bR^q \to \bC$ the section of $f$ at $c\in C$, i.e. $f^c(x) = f(x,c)$ for $x\in \bR^q$. Then $f^c$ is $\cB^q$-measurable (cf.~\cite[Lem.~7.6.1]{Cohn2013}) and since $C$ is countable, the function $f$ is $\cB^n$-measurable. Further, by~\cite[Thm.~7.6.7]{Cohn2013},  $f^c \in L^1(\bR^q)$ for $\zeta^{n-q}$-almost all $c\in C$ (hence for all $c\in C$), and for all $A\in \cB(\bR^q \times \bR^{n-q}_d)$, in particular all $A\in \cB^n$,  we have
\begin{align*}
& \int_{\bR^q \times \bR^{n-q}_d} f(x,y)  \one_A (x,y) (\lambda^q \times \zeta^{n-q})(\di(x,y)) \\
&  = \int_{\bR^{n-q}_d} \int_{\bR^q} f(x,y) \one_A(x,y) \,
\lambda^q (\di x) \, \zeta^{n-q}(\di y)  = \sum_{c\in C} \int_{\bR^q} f(x,c) \one_A(x,c) \, \lambda^q(\di x),
\end{align*}
and the same is true when $f$ is replaced by $|f|$.
With  $\zeta^{n-q}_C$ as defined in Section~\ref{S-prelim} (which is $\sigma$-finite), we easily see that the right-hand side of the last equation is equal to
$\int_A f(x,y) (\lambda^q \otimes \zeta^{n-q}_C) \, \di(x,y)$
for all $A\in \cB^n$. Summing up, we conclude that whenever the regular  complex measure $\widetilde{\mu}$ is absolutely continuous with respect to $\lambda^q \times \zeta^{n-q}$, then there are a countable set $C\subset \bR^{n-q}$ and a $\cB^n$-measurable function $f:\bR^n \to \bC$ such that $\widetilde{\mu}_{|\cB^n}$ is equal to the complex  measure $f\, \lambda^q \otimes \zeta^{n-q}_C$ on $\cB^n$, i.e.~the complex measure on $\cB^n$ with density~$f$ with respect to $\lambda^q \otimes \zeta^{n-q}_C$. We further have $\sum_{c\in C} \int_{\bR^q} |f(x,c)| \lambda^q(\di x) < \infty$.

If we drop the assumption that $U$ is the identity matrix, where $U$ is the orthogonal matrix that induces the topological isomorphism from $\bR^q \times \bR^{n-q}_d$ onto $\bR^n_\tau$ according to Lemma~\ref{l-Hewitt}, then it is easily seen that the image measure $U(\lambda^q \times \zeta^{n-q})$ is the Haar measure on $\bR^n_\tau$, that the regular complex measures on $\bR^n_\tau$ that are absolutely continuous with respect to $U(\lambda^q \times \zeta^{n-q})$ are exactly the image measures under $U$ of the corresponding ones on $\bR^q \times \bR^{n-q}$, and that also their restriction to $\cB^n$ is carried to the corresponding restriction. Combining this with Theorem~\ref{t-Taylor2}, with the previous notations, we immediately obtain the following result:

\begin{proposition} \label{p-Taylor3}
Let $\mu \in M(\bR^n)$ be invertible. Then there exist $p\in \bN_0$, $\nu \in M(\bR^n)$,  orthogonal matrices $U_1,\ldots, U_p \in \bR^{n\times n}$, integers $q_1,\ldots, q_p \in \{0,\ldots, n\}$, non-empty countable sets $C_1\subset \bR^{n-q_1}, \ldots, C_p \subset \bR^{n-q_p}$, complex numbers $\alpha_1,\ldots, \alpha_p$ and $\cB^n$-measurable functions $f_i:\bR^n \to \bC$, $i=1,\ldots, q$, each $f_i$ vanishing outside $\bR^{q_i} \times C_i$ and being integrable with respect to $\lambda^{q_i} \otimes \zeta_{C_i}^{n-q_i}$, such that
$$\mu = \mu_1 \ast \ldots \ast \mu_p \ast \exp(\nu),$$
where $\mu_i$ is of the form $\mu_i = U_i( \alpha_i \delta_0^n + f_i \lambda^{q_i} \otimes \zeta_{C_i}^{n-q_i})$ and is invertible for each $i\in \{1,\ldots, p\}$. Here, when $q_i = n$, we interpret $C_i$ as $\{0\}$ and $f_i \lambda^{q_i} \otimes \zeta_{C_i}^{n-q_i}$ as $f_i \lambda^n$.
\end{proposition}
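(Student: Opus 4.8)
The plan is to simply assemble the ingredients already prepared in this section. First I would invoke Theorem~\ref{t-Taylor2} to obtain $p \in \bN_0$, a complex measure $\nu \in M(\bR^n)$, topologies $\tau_1,\ldots,\tau_p$ (each at least as strong as the euclidean topology and turning $\bR^n$ into a locally compact abelian group), and invertible complex measures $\mu_1,\ldots,\mu_p$ with $\mu = \mu_1 \ast \cdots \ast \mu_p \ast \exp(\nu)$, where each $\mu_i = \alpha_i \delta_0^n + \widetilde{\mu_i}|_{\cB^n}$ with $\alpha_i \in \bC$ and $\widetilde{\mu_i}$ a regular complex measure on $(\bR^n_{\tau_i}, \cB(\bR^n_{\tau_i}))$ that is absolutely continuous with respect to the Haar measure on $\bR^n_{\tau_i}$. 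The whole task then reduces to unpacking the structure of each $\widetilde{\mu_i}$.

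Next, for each fixed $i$ I would apply Lemma~\ref{l-Hewitt} to produce an integer $q_i \in \{0,\ldots,n\}$ and an orthogonal matrix $U_i$ such that the induced map $U_i : \bR^{q_i} \times \bR^{n-q_i}_d \to \bR^n_{\tau_i}$ is a topological isomorphism. Pulling $\widetilde{\mu_i}$ back along $U_i$ yields a regular complex measure $\rho_i := U_i^{-1}(\widetilde{\mu_i})$ on $\bR^{q_i} \times \bR^{n-q_i}_d$ which, by the image-measure considerations recorded above, is absolutely continuous with respect to the regular Borel product $\lambda^{q_i} \times \zeta^{n-q_i}$. The Radon--Nikodym analysis carried out above then applies verbatim to $\rho_i$: there is a countable set $C_i \subset \bR^{n-q_i}$ and a $\cB^n$-measurable function $f_i : \bR^n \to \bC$, vanishing outside $\bR^{q_i} \times C_i$ and satisfying $\sum_{c \in C_i} \int_{\bR^{q_i}} |f_i(x,c)| \, \lambda^{q_i}(\di x) < \infty$, such that $\rho_i|_{\cB^n} = f_i \, \lambda^{q_i} \otimes \zeta^{n-q_i}_{C_i}$. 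In the boundary cases this specialises as indicated: for $q_i = n$ one reads $C_i = \{0\}$ and the measure as $f_i \lambda^n$, while for $q_i = 0$ one obtains a purely discrete $f_i \zeta^n_{C_i}$.

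It then remains to push forward. Since $U_i$ is a linear homeomorphism of $\bR^n$ in the euclidean topology, it maps $\cB^n$ onto $\cB^n$ and commutes with restriction, so $\widetilde{\mu_i}|_{\cB^n} = U_i(\rho_i)|_{\cB^n} = U_i(\rho_i|_{\cB^n}) = U_i(f_i \, \lambda^{q_i} \otimes \zeta^{n-q_i}_{C_i})$. Using $U_i(\delta_0^n) = \delta_0^n$ I can then write $\mu_i = \alpha_i \delta_0^n + U_i(f_i \lambda^{q_i} \otimes \zeta^{n-q_i}_{C_i}) = U_i(\alpha_i \delta_0^n + f_i \lambda^{q_i} \otimes \zeta^{n-q_i}_{C_i})$, which is the asserted form; invertibility of each $\mu_i$ is inherited directly from Theorem~\ref{t-Taylor2}. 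To meet the stated requirement that each $C_i$ be non-empty, in the degenerate situation where the analysis returns an empty $C_i$ (so that $\widetilde{\mu_i} = 0$ and $\mu_i = \alpha_i \delta_0^n$, forcing $\alpha_i \neq 0$ by invertibility), I would simply adjoin an arbitrary point to $C_i$ and extend $f_i$ by the value $0$ there, which alters neither $f_i \lambda^{q_i} \otimes \zeta^{n-q_i}_{C_i}$ nor $\mu_i$.

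I expect no serious obstacle, since the substantive work---the determination of the Haar measure $\lambda^q \times \zeta^{n-q}$ and the description of the absolutely continuous measures---has already been completed above. The only point requiring genuine care is the bookkeeping in the second and third paragraphs: verifying that absolute continuity with respect to the Haar measure transfers correctly under the pullback by $U_i$, and that taking image measures under $U_i$ commutes with restriction from $\cB(\bR^n_{\tau_i})$ to $\cB^n$. Both facts reduce to $U_i$ being simultaneously a topological isomorphism of the groups in question and a homeomorphism of $\bR^n$ for the euclidean topology.
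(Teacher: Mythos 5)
Your proposal is correct and follows essentially the same route as the paper, which likewise obtains the proposition by combining Theorem~\ref{t-Taylor2} with Lemma~\ref{l-Hewitt}, the identification of the Haar measure as the regular Borel product $\lambda^{q}\times\zeta^{n-q}$, the Radon--Nikodym analysis of the absolutely continuous regular measures, and the transfer under the orthogonal map $U_i$. Your extra remark on making $C_i$ non-empty in the degenerate case is a harmless bookkeeping point the paper leaves implicit.
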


With Proposition~\ref{p-Taylor3} we have completed steps (i) -- (iii) of our general plan and it remains to characterise when the complex measures $\mu_i$ of the special form appearing there are invertible. This will be done in the following sections, separately when $q_i=n$, $q_i=0$ and $q_i \in \{1,\ldots, n-1\}$. We close this section by pointing out that when complex measures of these specific forms are invertible, then their inverses have the same form. This result is again an immediate consequence of results of Taylor~\cite[Thm.~3.3 with Prop.~4.1]{T-spectrum}, since the complex measures of the form
$\alpha \delta_0^n + f \lambda^{q} \otimes \zeta_{C}^{n-q}$ for fixed $q\in \{0,\ldots, n\}$ form a closed subalgebra of $M(\bR^n)$, and for an  element $\mu$ of this subalgebra, the spectrum with respect to $M(\bR^n)$ is the same as the spectrum with respect to this subalgebra. This is by no means trivial, but true for the specific form, as follows easily from \cite[Thm.~3.3 with Prop.~4.1]{T-spectrum}, observing that we have already determined all possible topologies and hence the space $M_1(\bR^n)$ appearing in \cite[Prop.~4.1]{T-spectrum}.

\begin{proposition} {\rm (Taylor~\cite[Thm.~3.3 with Prop.~4.1]{T-spectrum}).} \label{p-Taylor4}
Let $q\in \{0,\ldots, n\}$, $U\in \bR^{n\times n}$ be an orthogonal matrix and
$\mu \in M(\bR^n)$ be invertible and of the form $\mu = U(\alpha \delta_0^n + f \lambda^q \otimes \zeta_C^{n-q})$, with
$\alpha \in \bC$, $C\subset \bR^{n-q}$ countable and $f$ vanishing outside $\bR^q \times C$ and being integrable with respect to $\lambda^q \otimes \zeta_C^{n-q}$. Then the inverse of $\mu$ is of the same form, i.e. can be written as $U (\widetilde{\alpha} \delta_0^n + \widetilde{f} \lambda^q \otimes \zeta_{\widetilde{C}}^{n-q})$ with $\widetilde{\alpha} \in \bC$, $\widetilde{C} \subset \bR^{n-q}$ being countable, and $\widetilde{f}$ vanishing outside $\bR^q \times \widetilde{C}$ and being integrable with respect to $\lambda^q \otimes \zeta^{n-q}_{\widetilde{C}}$.
\end{proposition}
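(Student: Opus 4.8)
The plan is to deduce the statement from Taylor's spectral-invariance results in~\cite{T-spectrum}, the genuinely new ingredient being the identification of the relevant subalgebra, which is made possible by the topology classification of Lemma~\ref{l-Hewitt}. First I would eliminate the matrix $U$. Since $x\mapsto Ux$ is a topological group automorphism of $\bR^n$, the image-measure map $\rho\mapsto U(\rho)$ is an isometric algebra automorphism of $M(\bR^n)$ with $U(\rho_1\ast\rho_2)=U(\rho_1)\ast U(\rho_2)$ and $U(\delta_0^n)=\delta_0^n$. Hence $\mu=U(\rho)$ is invertible if and only if $\rho:=\alpha\delta_0^n+f\,\lambda^q\otimes\zeta_C^{n-q}$ is, and then $\mu^{-1}=U(\rho^{-1})$; it therefore suffices to show that $\rho^{-1}$ is again of the form $\widetilde\alpha\,\delta_0^n+\widetilde f\,\lambda^q\otimes\zeta_{\widetilde C}^{n-q}$.

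Next I would place $\rho$ inside a subalgebra attached to the locally compact abelian group $G:=\bR^q\times\bR^{n-q}_d$ with Haar measure $H=\lambda^q\times\zeta^{n-q}$. Let $N(G):=\bC\,\delta_0^n+L^1(G)$ be the unitisation of the group algebra, i.e. the regular measures on $\cB(G)$ that are a point mass at $0$ plus a measure absolutely continuous with respect to $H$. This is a closed subalgebra of $M(G)$: the absolutely continuous measures form the convolution ideal $L^1(G)$, which is a closed subspace (total-variation limits of measures $\ll H$ remain $\ll H$), and adjoining the one-dimensional space $\bC\,\delta_0^n$ preserves both closedness and, via $\delta_0^n\ast\delta_0^n=\delta_0^n$, closure under convolution. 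By the analysis carried out in Section~\ref{S3}, restriction to $\cB^n$ carries $N(G)$ isometrically and bijectively onto $B_q:=\{\alpha\delta_0^n+f\,\lambda^q\otimes\zeta_C^{n-q}\}$ and intertwines the two convolutions; here the countability of $C$ and the fact that the continuous factor carries the euclidean relative topology guarantee that $\cB(G)$ and $\cB^n$ agree on the relevant supports. Thus $B_q$ is a closed subalgebra of $M(\bR^n)$, and the task reduces to showing that invertibility of $\rho$ in $M(\bR^n)$ forces $\rho^{-1}\in B_q$.

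This is precisely a spectral-invariance assertion: the spectrum of $\rho$ relative to $B_q$ should equal its spectrum relative to $M(\bR^n)$ (equivalently, relative to $N(G)$ versus $M(G)$). I would derive it from Taylor~\cite[Thm.~3.3 with Prop.~4.1]{T-spectrum}, whose abstract hypotheses have to be verified for our subalgebra. The decisive point is that Lemma~\ref{l-Hewitt} enumerates \emph{all} topologies $\tau$ making $\bR^n_\tau$ a locally compact abelian group stronger than the euclidean one; this determines the subalgebra $M_1(\bR^n)$ occurring in~\cite[Prop.~4.1]{T-spectrum} and lets me recognise $B_q$ as one of the subalgebras to which Taylor's inverse-closedness applies. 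Granting this, $0\notin\mathrm{sp}_{M(\bR^n)}(\rho)$ gives $0\notin\mathrm{sp}_{B_q}(\rho)$, so $\rho$ is invertible already within $B_q$; as an inverse computed in a subalgebra is automatically the inverse in the ambient algebra, $\rho^{-1}\in B_q$ has the asserted form, and applying $U$ concludes the proof.

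The main obstacle is this spectral-invariance step: inverse-closedness fails for generic subalgebras, so the real work lies in checking that $N(G)$ (equivalently $B_q$) meets the exact conditions of Taylor's theorem and in matching it with the space $M_1(\bR^n)$ of~\cite{T-spectrum}. A secondary technical point is the reconciliation of the two Borel structures and the two convolutions on $G$ and on $\bR^n$, which is controlled by the countability of $C$ and the euclidean relative topology on the continuous factor, and which has essentially been handled already in Section~\ref{S3}.
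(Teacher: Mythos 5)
Your proposal is correct and follows essentially the same route as the paper: the paper likewise reduces to the observation that the measures $\alpha\delta_0^n+f\,\lambda^q\otimes\zeta_C^{n-q}$ (for fixed $q$) form a closed subalgebra of $M(\bR^n)$ in which the spectrum agrees with the spectrum relative to $M(\bR^n)$, deducing this spectral invariance from Taylor~\cite[Thm.~3.3 with Prop.~4.1]{T-spectrum} together with the determination of all admissible topologies (hence of the space $M_1(\bR^n)$) from Lemma~\ref{l-Hewitt}. Your explicit treatment of the conjugation by $U$ and of the identification of $B_q$ with the unitisation of $L^1(\bR^q\times\bR^{n-q}_d)$ merely spells out steps the paper leaves implicit.
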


\section{The case $q=n$} \label{S4}

In this section we characterise when a complex measure of the form $\mu= \alpha \delta_0^n + f \lambda^n$ is invertible, where $\alpha\in \bC$ and $f\in L^1(\bR^n)$; this corresponds to the case $q_i=n$ in Proposition~\ref{p-Taylor3}.   It turns out that there is a subtle difference between $n=1$ and $n\geq 2$ in the form of the L\'evy--Khintchine representation, which basically is due to the fact that $\{ z \in \bR^n : |x|>R\}$ for $R>0$ is connected when $n\geq 2$, while it is not when $n=1$. Let us first start with some general equivalences which are true for all dimensions and follow easily from results in Taylor~\cite{T73}:

\begin{proposition} \label{p1-q=n}
Let $n\in \bN$ and $\mu\in M(\bR^n)$ be of the form $\mu = \alpha \delta_0^n + f \lambda^n$ with
$\alpha\in \bC$ and $f\in L^1(\bR^n)$. Then the following are equivalent:
\begin{enumerate}
\item[(i)] $\mu$ is invertible.
\item[(ii)] $\alpha \neq 0$ and $\widehat{\mu}(z) \neq 0$ for all $z\in \bR^n$.
\item[(iii)] $\inf_{z\in \bR^n} |\widehat{\mu}(z)| > 0$, i.e.~\eqref{eq-bounded-cf} holds true.
\end{enumerate}
\end{proposition}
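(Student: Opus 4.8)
The plan is to establish the cycle (i) $\Rightarrow$ (iii) $\Rightarrow$ (ii) $\Rightarrow$ (i), which has the pleasant feature of avoiding any spectral-permanence argument between subalgebras. The first two implications are elementary, resting only on the boundedness of characteristic functions and the Riemann--Lebesgue lemma; the implication (ii) $\Rightarrow$ (i) is the substantive one, and I would obtain it by applying Gelfand theory to the closed unital subalgebra of $M(\bR^n)$ consisting of measures of exactly this form.

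For (i) $\Rightarrow$ (iii) I would simply reuse the observation from the introduction: if $\mu \ast \mu' = \delta_0^n$ with $\mu' \in M(\bR^n)$, then $\widehat{\mu}(z)\,\widehat{\mu'}(z) = 1$ for all $z$, and since $|\widehat{\mu'}(z)| \les \|\mu'\|$, this forces $|\widehat{\mu}(z)| \ges \|\mu'\|^{-1} > 0$ uniformly in $z$, which is \eqref{eq-bounded-cf}. For (iii) $\Rightarrow$ (ii), condition (iii) at once gives $\widehat{\mu}(z) \neq 0$ everywhere; writing $\widehat{\mu}(z) = \alpha + \cF f(z)$ and invoking the Riemann--Lebesgue lemma so that $\cF f(z) \to 0$ as $|z| \to \infty$, I get $|\alpha| = \lim_{|z|\to\infty} |\widehat{\mu}(z)| \ges \inf_{z} |\widehat{\mu}(z)| > 0$, hence $\alpha \neq 0$.

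The core of the argument is (ii) $\Rightarrow$ (i). I would introduce $A := \{\alpha \delta_0^n + f\lambda^n : \alpha \in \bC,\ f \in L^1(\bR^n)\}$. Since $\delta_0^n$ is singular and $f\lambda^n$ absolutely continuous with respect to Lebesgue measure, the total variation norm splits as $\|\alpha\delta_0^n + f\lambda^n\| = |\alpha| + \|f\|_{L^1}$, so $A$ is a closed unital subalgebra of $M(\bR^n)$, namely the unitisation of the commutative Banach algebra $L^1(\bR^n)$. Using the classical identification of the Gelfand space of $L^1(\bR^n)$ with $\bR^n$ (each nonzero character being $f \mapsto \cF f(z)$ for a unique $z$), the characters of $A$ are exactly the maps $\alpha\delta_0^n + f\lambda^n \mapsto \alpha + \cF f(z) = \widehat{\mu}(z)$, $z \in \bR^n$, together with the augmentation character $\alpha\delta_0^n + f\lambda^n \mapsto \alpha$. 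By the Gelfand invertibility criterion, $\mu$ is invertible in $A$ if and only if no character annihilates it, i.e.\ if and only if $\widehat{\mu}(z) \neq 0$ for all $z$ and $\alpha \neq 0$ --- precisely (ii). As the resulting inverse lies in $A \subset M(\bR^n)$, this yields (i).

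I expect the main obstacle to be justifying the identification of the Gelfand space of $L^1(\bR^n)$ with $\bR^n$, that is, the fact that every nonzero complex homomorphism of $L^1(\bR^n)$ is evaluation of a Fourier transform at some point. This is a standard result of commutative harmonic analysis and is exactly the kind of spectral description contained in Taylor~\cite{T73}; granting it, the remainder of (ii) $\Rightarrow$ (i) is a purely formal application of Gelfand theory. I would emphasise that this route never needs the nontrivial fact that the inverse stays in $A$ when $\mu$ is only known to be invertible in the larger algebra $M(\bR^n)$: the direction (ii) $\Rightarrow$ (i) uses only that an $A$-inverse is automatically an $M(\bR^n)$-inverse, and the reverse passage is routed through (iii).
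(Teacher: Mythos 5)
Your proposal is correct, and for the one substantive implication it takes a genuinely different (though closely related) route from the paper. The paper proves the easy parts the same way you do --- (i) $\Rightarrow$ (iii) from boundedness of $\widehat{\mu'}$, and (ii) $\Leftrightarrow$ (iii) from continuity plus the Riemann--Lebesgue limit $\lim_{|z|\to\infty}\widehat{\mu}(z)=\alpha$ --- but for the hard direction it goes (iii) $\Rightarrow$ (i) by citing Taylor: condition \eqref{eq-bounded-cf} makes the convolution operator $C_\mu:L^1(\bR^n)\to L^1(\bR^n)$, $C_\mu g=\mu\ast g$, invertible (Taylor, Prop.~1.2.5), and an invertible convolution operator forces $\mu$ itself to be invertible (Taylor, Cor.~1.2.8). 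You instead prove (ii) $\Rightarrow$ (i) by identifying the Gelfand space of the unitisation $A=\bC\delta_0^n+L(\bR^n)$ as $\bR^n$ together with the augmentation character $\mu\mapsto\alpha$, and invoking the Gelfand invertibility criterion in $A$; since an $A$-inverse is an $M(\bR^n)$-inverse, you are done. Both arguments ultimately rest on the same classical input (the description of $\Delta(L^1(\bR^n))$, i.e.\ Wiener-type theory), and indeed the paper itself records exactly your algebra $A$, its closedness, its identification as the unitisation of $L^1(\bR^n)$, and the one-point compactification of its Gelfand space in Examples~\ref{ex-Banach-algebra-1}~(c) and~\ref{ex-Banach-algebra-2}~(c) --- just not in the proof of this proposition. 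What your route buys is self-containedness and transparency: the two characters that can vanish are visibly $\widehat{\mu}(z)$ and $\alpha$, so condition (ii) is read off directly, and you correctly note that no inverse-closedness (spectral permanence) argument is needed since the passage from $M(\bR^n)$-invertibility back to the subalgebra is routed through (iii). What the paper's route buys is brevity and the extra dividend that the convolution operator on $L^1$ is itself invertible. Your cycle (i) $\Rightarrow$ (iii) $\Rightarrow$ (ii) $\Rightarrow$ (i) does establish all three equivalences; the only point taken on faith is the standard identification of the nonzero complex homomorphisms of $L^1(\bR^n)$ with Fourier-transform evaluations, which you flag explicitly and which the paper also uses (with references) elsewhere.
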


\begin{proof}
The equivalence of (ii) and (iii) is an easy consequence of the continuity of $\widehat{\mu}$ and the Riemann--Lebesgue lemma, according to which $\lim_{|z|\to \infty} \widehat{\mu}(z) = \alpha$. That (i) implies (iii) was already observed in the introduction. For the proof that (iii) implies (i), observe that if \eqref{eq-bounded-cf} holds, then by Taylor~\cite[Prop.~1.2.5]{T73}, the convolution operator $C_\mu:L^1(\bR^n) \to L^1(\bR^n)$, defined by $C_\mu f = \mu \ast f$, is invertible, hence $\mu$ is invertible by~\cite[Cor.~1.2.8]{T73}.
\end{proof}

Let us now add some more equivalences when $n=1$ and $\alpha \neq 0$, which were given by Taylor~\cite{T73} and Berger~\cite{B19}.

\begin{proposition} \label{p2-q=n}
Let $\mu \in M(\bR)$ be of the form $\mu = \alpha \delta_0^1 + f \lambda^1$ with $\alpha \in \bC\setminus \{0\}$ and $f\in L^1(\bR)$. Then the following are equivalent:
\begin{enumerate}
\item[(i)] $\mu$ is invertible.
\item[(iv)] $\mu \in \CLKeins$.
\item[(v)] $\mu \in \CLKeins$ with $\gamma=0$ in \eqref{eq-CLK} and complex L\'evy type measure $\nu$ being of the form
$$\nu(\di x) = (m \, \re^{-|x|}/x + g(x)) \lambda^1(\di x)$$
for some $m\in \bZ$ and $g\in L^1(\bR)$.
\item[(vi)] There exist $m \in \bZ$ and $\nu'\in M(\bR^1)$ such that $\mu = \sigma^{\ast m} \ast \exp(\nu')$, where $\sigma$ is the signed measure on $\bR$ defined in Theorem~\ref{t-Taylor1}
\end{enumerate}
\end{proposition}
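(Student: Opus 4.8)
The plan is to prove the equivalences through the short cycle $(\mathrm{v})\Rightarrow(\mathrm{iv})\Rightarrow(\mathrm{i})\Rightarrow(\mathrm{v})$ together with $(\mathrm{i})\Leftrightarrow(\mathrm{vi})$, with essentially all the work concentrated in the single implication $(\mathrm{i})\Rightarrow(\mathrm{v})$/$(\mathrm{vi})$. The remaining directions are immediate. For $(\mathrm{v})\Rightarrow(\mathrm{iv})$: (v) is a special case of membership in $\CLKeins$. For $(\mathrm{iv})\Rightarrow(\mathrm{i})$: a representation \eqref{eq-CLK} forces $\widehat{\mu}(z)=c\exp\{\cdots\}\neq 0$ for every $z$, so together with the standing hypothesis $\alpha\neq 0$ condition (ii) of Proposition~\ref{p1-q=n} holds, whence $\mu$ is invertible. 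For $(\mathrm{vi})\Rightarrow(\mathrm{i})$: $\sigma$ and $\exp(\nu')$ are invertible, hence so is their convolution.

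For the hard direction assume (i). By Proposition~\ref{p1-q=n} the function $\widehat{\mu}$ has no zeros on $\bR$, and since $\widehat{\mu}(z)=\alpha+\cF f(z)$ the Riemann--Lebesgue lemma gives $\widehat{\mu}(z)\to\alpha\neq 0$ as $|z|\to\infty$. Thus $\widehat{\mu}$ extends to a continuous, nowhere-vanishing function on the one-point compactification $\bR\cup\{\infty\}\cong S^1$, taking the value $\alpha$ at $\infty$; let $m\in\bZ$ be the winding number of this extension about $0$. Here $\sigma$ enters through $\widehat{\sigma}(z)=(1+\ri z)/(1-\ri z)=\exp\{2\ri\arctan z\}$, which tends to $-1$ as $|z|\to\infty$ and so extends to $S^1$ with winding number $1$; hence $\widehat{\sigma^{\ast m}}=\widehat{\sigma}^{\,m}$ has winding number $m$ and value $(-1)^m$ at $\infty$. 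Consequently the invertible complex measure $\rho:=\mu\ast\sigma^{\ast(-m)}$ has Gelfand/characteristic function $\widehat{\rho}=\widehat{\mu}\cdot\widehat{\sigma}^{\,-m}$ of winding number $0$, with limit $\alpha(-1)^m\neq 0$ at $\infty$.

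The next step is to recognise $\rho$ as an exponential. Since $\sigma^{\ast(-m)}$ is again of the form point mass plus absolutely continuous measure, $\rho$ lies in the closed commutative unital subalgebra $\cA:=\{\alpha'\delta_0^1+f'\lambda^1:\alpha'\in\bC,\ f'\in L^1(\bR)\}$ of $M(\bR)$ and has the shape $\rho=\alpha'\delta_0^1+f'\lambda^1$ with $\alpha'=\alpha(-1)^m\neq 0$. The algebra $\cA$ is the unitisation of $L^1(\bR)$, hence semisimple with maximal ideal space $S^1$, the Gelfand transform of $\rho$ being precisely the continuous extension of $\widehat{\rho}$ to $S^1$. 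By Proposition~\ref{p-Taylor4} (or directly by the Gelfand criterion, as $\widehat{\rho}$ is nonvanishing on $S^1$) $\rho$ is invertible in $\cA$; and since the winding number of its Gelfand transform is $0$, $\rho$ lies in the principal component of the invertible group of $\cA$, i.e.\ $\rho=\exp(\nu'')$ for some $\nu''=\beta\delta_0^1+g\lambda^1\in\cA$ with $g\in L^1(\bR)$. Then $\mu=\sigma^{\ast m}\ast\exp(\nu'')$, which is (vi). Finally, the elementary integral $\int_0^\infty \frac{\sin(zx)}{x}\,\re^{-x}\,\di x=\arctan z$ yields $\widehat{\sigma}(z)=\exp\{\int_{\bR}(\re^{\ri zx}-1)\,\re^{-|x|}x^{-1}\,\di x\}$, and combining this with $\widehat{\exp(\nu'')}(z)=\re^{\beta+\int_\bR g(x)\,\di x}\exp\{\int_{\bR}(\re^{\ri zx}-1)g(x)\,\di x\}$ puts $\widehat{\mu}$ in the form \eqref{eq-CLK} with $c=\re^{\beta+\int_\bR g}$, $\gamma=0$ and complex L\'evy type measure $(m\,\re^{-|x|}/x+g(x))\,\lambda^1(\di x)$. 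This is (v).

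The step I expect to be the main obstacle is the identification of $\rho$ as $\exp(\nu'')$ \emph{within} $\cA$: vanishing of the winding number only produces a continuous logarithm of the Gelfand transform on $S^1$, and one must know that this logarithm is itself the Gelfand transform of an element of $\cA$ — equivalently, that the winding number is the \emph{complete} obstruction to an invertible element of $\cA$ being an exponential, i.e.\ $G(\cA)/\exp(\cA)\cong\check{H}^1(S^1,\bZ)\cong\bZ$. This Arens--Royden/index phenomenon for the algebra $\cA$ with Gelfand space $S^1$ is the scalar prototype of the Banach-algebra logarithm results developed later in the paper, and its careful justification (semisimplicity, $\exp(\cA)$ being the principal component, and the cohomological computation) is where the genuine content lies. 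It is worth noting that Taylor's Theorem~\ref{t-Taylor1} would give $\mu=\delta_\gamma^1\ast\sigma^{\ast m}\ast\exp(\nu)$ more cheaply, but only with a \emph{general} $\nu\in M(\bR)$; the absolute continuity of the non-$\sigma$ part of the L\'evy measure asserted in (v) is exactly what the $\cA$-argument secures and what Theorem~\ref{t-Taylor1} alone does not. The remaining ingredients — the winding-number bookkeeping for $\widehat{\sigma}$ and the closed form for $\arctan$ — are routine.
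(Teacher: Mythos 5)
Your proof is correct, but it takes a genuinely different route from the paper: the paper's own proof of this proposition is essentially a citation, invoking Berger~\cite[Thm.~4.4]{B19} for the implication (ii)$\Rightarrow$(v) and Taylor~\cite[Sect.~1.6.4]{T73} for (i)$\Rightarrow$(vi), whereas you supply a self-contained argument. Your winding-number bookkeeping is accurate ($\widehat{\sigma}(z)=\re^{2\ri\arctan z}$ does have index $1$ on the one-point compactification, so $\rho=\mu\ast\sigma^{\ast(-m)}$ has index $0$ and point mass $\alpha(-1)^m$ at the origin), and the identification of $\bC\delta_0^1+L(\bR)$ as the semisimple unitisation of $L^1(\bR)$ with Gelfand space $S^1$ matches Example~\ref{ex-Banach-algebra-2}~(c). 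The one step you lean on without proof is the Arens--Royden isomorphism $\cA^{-1}/\exp(\cA)\cong\check{H}^1(S^1,\bZ)\cong\bZ$ realised by the winding number of the Gelfand transform; this is a standard citable theorem and you flag it honestly, so I would not call it a gap --- but note that you could avoid it entirely by repeating the paper's own argument for Theorem~\ref{t-delta-lebesgue1}: once the index is $0$, the distinguished logarithm of $\widehat{\rho}/\widehat{\rho}(0)$ has equal limits at $\pm\infty$, hence lies in a constant plus $C_0(\bR)$, and the density of $\cF(L^1)$ in $C_0(\bR)$ together with the Wiener--L\'evy theorem produces the required $g\in L^1(\bR)$. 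Your closing observation is also on point: the $\cA$-internal exponential is exactly what upgrades Taylor's Theorem~\ref{t-Taylor1} (general $\nu\in M(\bR)$, possibly nonzero shift $\gamma$) to the sharper statements (v) and (vi) with $\gamma=0$ and absolutely continuous finite part, which is the real content here.
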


\begin{proof}
Recall the equivalence of (i) and  (ii) in Proposition~\ref{p1-q=n}. By \cite[Thm.~4.4]{B19}, (ii) implies (v), which in turn implies (iv), and that (iv) implies (ii) is clear. That (i) implies (vi) is proved in~\cite[Sect.~1.6.4]{T73} (see also Sect.~8.4.1 in~\cite{T73}, where this is more clearly stated), and that (vi) implies (i) is clear since $\exp(\nu')$ and $\sigma$ are both invertible.
\end{proof}

Observe that when $\alpha = 0$, then (iv) does not imply (i). A counterexample is given by the exponential probability distribution with parameter 1, which satisfies (iv) with L\'evy measure $\nu(\di x) = x^{-1} \re^{-x} \one_{(0,\infty)}(x)$ (see \cite[Ex.~9.16 (e)]{BrockLind2024} or \cite[Ex.~8.10]{Sato2013}), but is not invertible, being an absolutely continuous distribution. Condition (vi) still implies~(i) and since there is no absolutely continuous invertible complex measure, it follows that there is no absolutely continuous complex measure which has a representation as in (vi).

We now give the counterpart to Proposition~\ref{p2-q=n} when $n\geq 2$. While invertibility of $\mu$ is still equivalent to $\mu \in \CLK$, the form of the complex L\'evy type measure $\nu$ in (v) and the representation in (vi) become simpler. More precisely, we have:

\begin{theorem} \label{t-delta-lebesgue1}
Let $n\geq 2$ and  $\mu\in M(\mathbb{R}^n)$ be of the form $\mu=\alpha\delta_0^n+f\lambda^n$, where $\alpha \in \bC\setminus\{0\}$ and $f\in L^1(\mathbb{R}^n)$. Then the following are equivalent:
\begin{enumerate}
\item[(i)] $\mu$ is invertible.
\item[(iv)] $\mu \in \CLK$.
\item[(v')] $\mu \in \CLK$ with $\gamma=0$ in \eqref{eq-CLK} and the complex L\'evy type measure $\nu$ being of the form
$\nu(\di x) = g(x) \lambda^1(\di x)$ for some $g\in L^1(\bR)$, i.e.~$\nu$ is finite (more precisely, can be extended to a finite complex measure on $\cB^n$) and absolutely continuous with respect to $\lambda^n$.
\item[(vi')] There exists $\nu' \in M(\bR^n)$ such that $\mu = \exp(\nu')$.
\end{enumerate}
\end{theorem}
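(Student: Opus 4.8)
The plan is to establish the four conditions equivalent through the cycle (i) $\Rightarrow$ (v') $\Rightarrow$ (vi') $\Rightarrow$ (iv) $\Rightarrow$ (i), in which only the first implication carries real content. The closing three are short. For (iv) $\Rightarrow$ (i): if $\mu \in \CLK$ then $\widehat{\mu}$ is nonvanishing (it equals a nonzero constant times the exponential of a finite quantity), and since $\alpha \neq 0$ by hypothesis, condition (ii) of Proposition~\ref{p1-q=n} holds, so $\mu$ is invertible. For (vi') $\Rightarrow$ (iv): formula \eqref{eq-exponential2} exhibits $\exp(\nu')$ as an element of $\CLK$ with finite complex L\'evy type measure. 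For (v') $\Rightarrow$ (vi'): with $\nu = g\lambda^n$ finite and $\gamma = 0$, one matches $\widehat{\mu}$ against \eqref{eq-exponential2} by taking $\nu' := g\lambda^n + \beta\delta_0^n$, where $\beta$ absorbs the constant factor $c$, exactly as in the discussion following \eqref{eq-exponential3}; this writes $\mu = \exp(\nu')$.

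The heart is (i) $\Rightarrow$ (v'). By Proposition~\ref{p1-q=n} we may assume (ii): $\alpha \neq 0$ and $\widehat{\mu}(z) \neq 0$ for all $z$. Normalise by writing $\widehat{\mu} = \alpha(1+h)$, where $h := \mathcal{F}(f/\alpha)$ lies in the Wiener algebra $A(\mathbb{R}^n) := \mathcal{F}(L^1(\mathbb{R}^n))$; by the Riemann--Lebesgue lemma $h$ vanishes at infinity, and $1+h = \widehat{\mu}/\alpha$ is nonvanishing. The goal is to produce $g \in L^1(\mathbb{R}^n)$ with $\exp(\mathcal{F}g) = 1+h$. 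Once this is done, $\widehat{\mu}(z) = \alpha\exp(\mathcal{F}g(z)) = c\exp\{\int_{\mathbb{R}^n}(\re^{\ri z^T x}-1)g(x)\,\di x\}$ with $c = \alpha\exp(\mathcal{F}g(0))$, and reading off $\gamma = 0$ together with the finite, absolutely continuous L\'evy type measure $\nu = g\lambda^n$ yields precisely (v').

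To produce such a $g$, I would work in the unital commutative Banach algebra $B := \mathbb{C}\cdot 1 \oplus A(\mathbb{R}^n)$, whose Gelfand space is the one-point compactification $\mathbb{R}^n \cup \{\infty\} \cong S^n$ (the character at $\infty$ sending $c\,1 + \mathcal{F}g \mapsto c$). Under the Gelfand transform, $1+h$ corresponds to the continuous function on $S^n$ equal to $1+h$ on $\mathbb{R}^n$ and to $1$ at $\infty$; being nonvanishing, $1+h$ is invertible in $B$. The decisive point---and the only place where $n \geq 2$ is used---is that $S^n$ is simply connected, so the map $1+h\colon S^n \to \mathbb{C}\setminus\{0\}$ is null-homotopic, equivalently $\check H^1(S^n;\mathbb{Z}) = 0$. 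By the Arens--Royden theorem every invertible element of $B$ is then an exponential, so $1+h = \exp(L)$ for some $L \in B$; writing $L = c'\,1 + \mathcal{F}g$ and evaluating at $\infty$ forces $\exp(c') = 1$, hence $c' \in 2\pi\ri\mathbb{Z}$, and after subtracting this constant we may take $L = \mathcal{F}g \in A(\mathbb{R}^n)$, as required.

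The main obstacle is exactly this logarithm-in-the-algebra step: a continuous branch of $\log(1+h)$ always exists on $\mathbb{R}^n$ (the distinguished logarithm), but one must show it lies in the Fourier algebra, i.e.\ is itself the Fourier transform of an $L^1$ function. For $n=1$ this fails in general, because the circle carries a nontrivial winding number $m \in \mathbb{Z} \cong \check H^1(S^1;\mathbb{Z})$, and this integer is precisely the one producing the $\sigma^{\ast m}$ factor in Proposition~\ref{p2-q=n}; the gain for $n \geq 2$ is that this cohomological obstruction vanishes. In writing up the step I would either invoke Arens--Royden directly or, to keep the argument more self-contained, combine the topological null-homotopy of $1+h$ on $S^n$ with a Wiener--L\'evy symbolic-calculus argument: a single holomorphic branch of $\log(1+\,\cdot\,)$ exists on a neighbourhood of the compact spectrum $\overline{(1+h)(S^n)} \subset \mathbb{C}\setminus\{0\}$, and composing it with $h$ preserves membership in $A(\mathbb{R}^n)$.
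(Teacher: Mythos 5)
Your proof is correct, and the hard implication (i) $\Rightarrow$ (v$'$) follows a genuinely different route from the paper's. The paper argues by hand: it takes the distinguished logarithm $\psi$ of $\widehat{\mu}/\widehat{\mu}(0)$, uses the connectedness of $\{z\in\bR^n : |z|>R\}$ for $n\geq 2$ to show that $\psi$ has a limit at infinity, approximates the resulting $C_0$-function uniformly within $\log\frac{3}{2}$ by an $L^1$-Fourier transform, and finishes with the Wiener--L\'evy theorem applied to $\log(1+w)$ on the small remaining perturbation. You instead pass to the unitisation $B=\bC\cdot 1\oplus \cF(L^1(\bR^n))$, identify its Gelfand space with the one-point compactification $S^n$ (as the paper itself does in Example~\ref{ex-Banach-algebra-2}~(c)), and invoke the Arens--Royden theorem together with $\check H^1(S^n;\bZ)=0$; the hypothesis $n\geq 2$ enters through vanishing cohomology rather than through connectedness of the exterior of a ball. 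Both are legitimate; yours is shorter and makes the $n=1$ obstruction (the winding number, which is exactly what produces the $\sigma^{\ast m}$ factor in Proposition~\ref{p2-q=n}) conceptually transparent, while the paper's is more self-contained, resting only on the scalar distinguished logarithm and the classical Wiener--L\'evy theorem. Your treatment of the remaining implications matches the paper's. One caveat: your closing ``more self-contained'' alternative is not quite right as stated --- the compact set $\overline{(1+h)(S^n)}$ may separate $0$ from $\infty$ (null-homotopy of the map does not prevent its image from being, say, an annulus about the origin), in which case no single-valued holomorphic branch of $\log$ exists on a neighbourhood of the spectrum and the symbolic calculus does not apply directly; one must first shrink the spectrum into a disc about $1$, which is precisely what the paper's approximation step $\sup_{z}|\widetilde{\psi}(z)-\cF h(z)|<\log\frac{3}{2}$ accomplishes.
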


\begin{proof}
That (v') implies (iv) is clear. Further, (iv) implies (ii) of Proposition~\ref{p1-q=n} since we assumed $\alpha\neq 0$, hence (iv) implies (i). The proof that (i) implies (v') is the hardest part. For its proof, assume that $\mu$ is invertible, so that  $\widehat{\mu}(z) \neq 0$ for all $z\in \bR^n$.
Replacing $\mu$ by $(\widehat{\mu}(0))^{-1} \mu$, we can assume without loss of generality that $\widehat{\mu}(0) = 1$. Since $\widehat{\mu}$ has no zeroes on $\bR^n$, we can take the distinguished logarithm $\psi:\bR^n \to \bC$ of~$\widehat{\mu}$ (see Section~\ref{S-prelim}).
The idea is to prove that $\psi$ has a limit at infinity, which implies that there exists a constant $c\in \mathbb{C}$ such that $\psi-c\in C_0(\mathbb{R}^n)$. Using then the denseness of the range of the $L^1$-Fourier transform in  $C_0(\mathbb{R}^n)$, we can apply a Wiener-L\'evy theorem.

To carry out the above idea, observe that $\re^{\psi(z)}=\widehat{\mu}(z)$ tends to $\alpha \neq 0$ as $|z|\to\infty$ by the Riemann-Lebesgue lemma. Denote by $\log \alpha$ some arbitrary but fixed complex logarithm of $\alpha$. Then $\re^{\psi(z) - \log \alpha}$ tends to 1 as $|z|\to\infty$. Consequently, for each $\varepsilon > 0$, there exists $R_\varepsilon > 0$ such that
$$\psi(z) - \log \alpha \in \bigcup_{k \in \bZ} \{ w + 2 \pi \ri k : w \in \bC, |w| < \varepsilon\}
$$
whenever $|z|>R_\varepsilon$. Since $z\mapsto \psi(z) - \log \alpha$ is continuous, since the set $\{ z \in \bR^n : |z| > R_\varepsilon\}$ is connected as a consequence of $n\geq 2$, and since the sets $\{ w +2 \pi \ri k : w \in \bC, |w| < \varepsilon\}$, $k\in \bZ$, are separated from each other when $\varepsilon < \pi$, we conclude that there is $k_0 \in \bZ$ (independent of $\varepsilon$) such that $\psi(z) - \log \alpha \in \{ w + 2 \pi \ri k_0 : |w|< \varepsilon\}$ whenever $\varepsilon \in (0,\pi)$ and $|z| > R_\varepsilon$. Hence $\lim_{|z|\to\infty} \psi(z)$ exists and is equal to $2 \pi \ri k_0 + \log \alpha$.

We conclude that
\begin{align*}
\widetilde{\psi}:\mathbb{R}^n\to \mathbb{C},\,\widetilde{\psi}(z):=\psi(z)-2\pi \ri k_0-\log \alpha
\end{align*}
is in $C_0(\mathbb{R}^n)$ (the space of continuous functions on $\bR^n$ vanishing at infinity, see Section~\ref{S-prelim}). As the range of the $L^1$-Fourier transform $\cF$ (cf. Section~\ref{S-prelim}) is dense in  $C_0(\mathbb{R}^n)$ with respect to uniform convergence (since it contains all Schwartz functions and since the set of Schwartz functions is dense),  there exists a function $h\in L^1(\mathbb{R}^n)$ such that
\begin{align} \label{eq-mu-1}
\sup_{z \in \bR^n} |\widetilde{\psi}(z)-\cF{h}(z)|<\ \log \frac{3}{2}.
\end{align}
Write
\begin{align}
\widehat{\mu}(z)
=&\re^{\widetilde{\psi}(z)-\cF{h}(z)}\, \re^{\cF{h}(z)} \, \re^{2\pi \ri k_0+\log \alpha}. \label{eq-mu-2}
\end{align}
Since
\begin{align*}
\re^{\widetilde{\psi}(z) - \cF{h}(z)} & = \frac{1}{\alpha} \widehat{\mu}(z) \re^{-\cF{h}(z)}
= \frac{1}{\alpha} \widehat{\mu}(z) \sum_{j=0}^\infty (\widehat{(-h \lambda^n)}(z))^j / j! = \frac{1}{\alpha} \widehat{\mu}(z) \, {[\exp (-h \lambda^n)]}^{\wedge}(z)
\end{align*}
we conclude that $\re^{\widetilde{\psi} - \cF{h}}$ is the characteristic function  of the complex measure
$$\rho := \frac{1}{\alpha} \mu \ast \exp(-h\lambda^n) =  (\delta_0^n + \alpha^{-1} f \lambda^n) \ast (\delta_0^n + \sum_{j=1}^\infty (-h \lambda^n)^{\ast j} / j!),$$
which is of the form $\rho = \delta_0^n + \varphi \lambda^n$ with $\varphi \in L^1(\bR^n)$. From \eqref{eq-mu-1} we derive
\begin{align*}
|\cF \varphi(z)| & =| \widehat{\rho}(z) - 1| =| \re^{\widetilde{\psi}(z) - \cF{h}(z)} - 1| \leq \re^{|\widetilde{\psi}(z)-\cF{h}(z)|}- 1 < \frac12
\end{align*}
for all $z\in \bR^n$. Define $W:= \{ w \in \bC : |w| < 1\}$ and the function $F: W \to \bC$, $w\mapsto \log (1+w)$, where $\log$ denotes the principal branch of the complex logarithm. Since $F$ is holomorphic on the open set $W$ with $F(0) = 0$ and $W$ contains the closure of $\{ \cF \varphi(z) : z \in \bR^n\}$, it follows from the Wiener--L\'evy theorem for Fourier transforms on $\bR^d$ (e.g.~Rudin~\cite[Theorem 6.2.4]{Rudin1962})
that $F(\cF \varphi)$ is in the range of the $L^1$-Fourier transform. In other words, there is $\tau \in L^1(\bR^n)$ such that
$$\cF \tau(z) = F(\cF \varphi(z)) = \log (1+\cF \varphi(z)) = \log (\widehat{\rho}(z))$$
for all $z\in \bR^n$, hence
$$\re^{\widetilde{\psi}(z) - \cF{h}(z)} = \widehat{\rho}(z) = \re^{\cF{\tau}(z)}.$$
Together with \eqref{eq-mu-2} we conclude that
\begin{align*}
\widehat{\mu}(z)=& \, \alpha\, \exp\left(\,\int_{\mathbb{R}^n}\re^{\ri y^T z}(\tau(y)+h(y))\, \di y\right).
\end{align*}
Specialising to $z=0$ gives $1= \alpha \, \exp\left( \int_{\bR^n} (\tau(y) + h(y)) \, \di y \right)$,
yielding (v') with $g=\tau+h$.

We have proved the equivalence of (i), (iv) and (v'). That (vi') implies (i) is clear from the discussion in the introduction. Conversely, as pointed out in the preliminaries following Equation~\eqref{eq-exponential3}, (v') implies that $\mu = \exp(g \lambda^1 + \beta \delta_0^n)$ with some $\beta\in \bC$, giving (vi).
\end{proof}

\begin{remark} \label{r-p=n}
When $n\in \bN$ and $\mu\in M(\bR^n)$ is invertible and signed, then the function $g$ appearing in (v) of Proposition~\ref{p2-q=n} and (v') of Theorem~\ref{t-delta-lebesgue1} is real-valued, as a consequence of Remark~\ref{r-qid-uniqueness}~(c).
\end{remark}

For properties of multivariate distributions, one is often interested in the question whether the property of interest is already determined by the corresponding property of its projections onto all real lines through the origin. Such results are typically called \lq Cram\'er--Wold devices\rq. For complex measures of the form $\alpha \delta_0^n + f \lambda^n$ with $\alpha \neq 0$ we have such a result for the properties of being invertible or of belonging to $\CLK$:

\begin{corollary}[Cram\'er--Wold device for invertibility and for $\mathrm{CLK}_0$] \label{c-Cramer-Wold}
Let $n\geq 2$ and $\mu\in M(\bR^n)$ be of the form $\mu = \alpha \delta_0^n + f \lambda^n$, where $\alpha\in \bC \setminus \{0\}$ and $f\in L^1(\bR^n)$. For each $a\in S^{n-1}$  denote by $\pi_a:\bR^n \to \bR a$ the projection onto the line $\bR a$, which is identified with $\bR$, and denote by $\pi_a(\mu)$ the image measure of $\mu$ under $\pi_a$.  Then the following are equivalent:
\begin{enumerate}
\item[(i)] $\mu$ is invertible.
\item[(iv)] $\mu \in \CLK$.
\item[(vii)]  $\pi_a(\mu)$ is invertible for all $a\in S^{n-1}$.
\item[(viii)] $\pi_a(\mu) \in \CLKeins$ for all $a\in S^{n-1}$.
\end{enumerate}
\end{corollary}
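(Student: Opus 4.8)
The plan is to show that, because $\alpha\neq 0$ is assumed throughout, all four conditions reduce to the single statement that $\widehat{\mu}$ has no zeros on $\bR^n$. The equivalence of (i) and (iv) is already supplied by Theorem~\ref{t-delta-lebesgue1}, so no further work is needed on that pair; the real content is in tying the one-dimensional projections $\pi_a(\mu)$ back to $\widehat{\mu}$, for which the results of Section~\ref{S4} in dimension $1$ are the essential tools.

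First I would determine the precise form of $\pi_a(\mu)$. After identifying $\bR a$ with $\bR$, the map $\pi_a$ is the linear surjection $x\mapsto a^T x$, so it carries $\delta_0^n$ to $\delta_0^1$, while the absolutely continuous part $f\lambda^n$ is pushed forward to an absolutely continuous measure on $\bR$: rotating so that $a$ becomes the first coordinate direction and applying Fubini, its density is the marginal $g_a(t):=\int_{a^\perp} f(ta+y)\,\lambda^{n-1}(\di y)$, which lies in $L^1(\bR)$. Hence $\pi_a(\mu)=\alpha\delta_0^1+g_a\lambda^1$ with the same $\alpha\neq 0$, so each $\pi_a(\mu)$ is exactly of the type to which Propositions~\ref{p1-q=n} and~\ref{p2-q=n} apply in dimension $1$. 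The link between the two scales is the elementary identity $\widehat{\pi_a(\mu)}(t)=\widehat{\mu}(ta)$ for all $t\in\bR$ and $a\in S^{n-1}$, which is immediate from the definition of the image measure; since $\{\,ta : t\in\bR,\ a\in S^{n-1}\,\}=\bR^n$, the totality of values $\widehat{\pi_a(\mu)}(t)$ is precisely the range of $\widehat{\mu}$.

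Now I would assemble the equivalences. Applying Proposition~\ref{p1-q=n} in dimension $n$ (and using $\alpha\neq 0$) shows that (i) holds iff $\widehat{\mu}(z)\neq 0$ for all $z\in\bR^n$. Applying the same proposition in dimension $1$ to each $\pi_a(\mu)=\alpha\delta_0^1+g_a\lambda^1$ shows that $\pi_a(\mu)$ is invertible iff $\widehat{\pi_a(\mu)}$ is zero-free on $\bR$; by the identity $\widehat{\pi_a(\mu)}(t)=\widehat{\mu}(ta)$, condition (vii) therefore holds iff $\widehat{\mu}(ta)\neq0$ for every $a\in S^{n-1}$ and $t\in\bR$, i.e.\ iff $\widehat{\mu}$ is zero-free on all of $\bR^n$. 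This yields (i) $\Leftrightarrow$ (vii). Finally, for each fixed $a$ Proposition~\ref{p2-q=n} gives that $\pi_a(\mu)$ is invertible iff $\pi_a(\mu)\in\CLKeins$, so passing to the quantifier over $a\in S^{n-1}$ gives (vii) $\Leftrightarrow$ (viii). Together with Theorem~\ref{t-delta-lebesgue1} this closes the cycle among (i), (iv), (vii), (viii).

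I do not expect any genuinely hard step here. The one point deserving care is verifying that the pushforward $\pi_a(f\lambda^n)$ really is absolutely continuous with an $L^1$ density and that $\pi_a(\mu)$ retains the form $\alpha\delta_0^1+g_a\lambda^1$ with $\alpha\neq0$, since this is exactly what makes the one-dimensional Propositions~\ref{p1-q=n} and~\ref{p2-q=n} applicable; once this is in place, the remainder is bookkeeping around the identity $\widehat{\pi_a(\mu)}(t)=\widehat{\mu}(ta)$ and the already-established equivalences.
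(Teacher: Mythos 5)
Your proposal is correct and follows essentially the same route as the paper: identify $\pi_a(\mu)$ as $\alpha\delta_0^1+f_a\lambda^1$, use $\widehat{\pi_a(\mu)}(t)=\widehat{\mu}(ta)$ to reduce all four conditions to zero-freeness of $\widehat{\mu}$, and invoke Propositions~\ref{p1-q=n}, \ref{p2-q=n} and Theorem~\ref{t-delta-lebesgue1}. The only difference is that you spell out the Fubini computation of the marginal density, which the paper leaves implicit.
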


\begin{proof}
For each $a\in S^{n-1}$ the complex measure $\pi_a(\mu)$ is also of the form $\alpha \delta_0^1 + f_a \lambda^1$ for some $f_a \in L^1(\bR)$. The characteristic function of $\pi_a(\mu)$ is given by $\widehat{\pi_a(\mu)}(z) = \widehat{\mu}(za)$, $z\in \bR$. Hence $\widehat{\mu}$ is non-vanishing if and only if all $\widehat{\pi_a(\mu)}$ are non-vanishing. The claim then follows from
Propositions~\ref{p1-q=n}, \ref{p2-q=n} and Theorem~\ref{t-delta-lebesgue1}.
\end{proof}

\section{The case $q=0$} \label{S5}

When $\mu_i$ in Proposition~\ref{p-Taylor3} corresponds to the case $q_i=0$, then $\mu_i$ is of the form $\mu_i = U_i (\alpha_i \delta_0^n + f_i \zeta_{C_i}^n)$ for some countable set $C_i$, which just means that $\mu_i$ is a discrete complex measure on $(\bR^n,\cB^n)$. A characterisation when discrete complex measures are invertible, and when they are in $\CLK$, has been obtained
by Alexeev and Khartov~\cite[Thm. 3.2]{AlexeevKhartov23}, and in the following statement we also add some characterisation given in Taylor~\cite[Cor.~8.3.3]{T73} to it.

\begin{theorem} {\rm (Alexeev and Khartov~\cite[Thm.~3.2]{AlexeevKhartov23}, Taylor~\cite[Cor.~8.3.3]{T73}).} \label{t-Khartov}
Let $\mu \in M(\bR^n)$ be discrete. Then the following are equivalent:
\begin{enumerate}
\item[(i)] $\mu$ is invertible.
\item[(ii)] $\inf_{z\in \bR^n} |\widehat{\mu}(z)| > 0$.
\item[(iii)] There exists a discrete complex measure $\mu' \in M(\bR^n)$ such that $\mu' \ast \mu = \delta_0^n$.
\item[(iv)] $\mu \in \CLK$.
\item[(v)]  $\mu \in \CLK$ and the complex L\'evy type measure $\nu$ is finite and discrete.
\item[(vi)] There exist $\gamma \in \bR^n$ and $\nu'\in M(\bR^n)$ such that $\mu = \delta_\gamma^n \ast \exp(\nu')$.
\item[(vii)] There exist $\gamma \in \bR^n$ and a discrete complex measure $\nu' \in M(\bR^n)$ such that $\mu=\delta_\gamma^n \ast \exp(\nu')$.
\end{enumerate}
\end{theorem}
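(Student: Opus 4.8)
The plan is to run the seven equivalences as a single cycle, separating the purely formal implications from the one substantial step, which is the content of the two cited theorems. I would first record the elementary implications. That (i) implies (ii) is exactly the necessary condition \eqref{eq-bounded-cf} noted in the introduction. The implication (v)~$\Rightarrow$~(iv) is trivial, as is (vii)~$\Rightarrow$~(vi). For (vi)~$\Rightarrow$~(i), recall from the introduction that $\delta_\gamma^n$ is invertible with inverse $\delta_{-\gamma}^n$ and that $\exp(\nu')$ is invertible with inverse $\exp(-\nu')$, so their convolution is invertible; and (iii)~$\Rightarrow$~(i) needs no argument, a discrete inverse being in particular an inverse.

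The analytic core is the equivalence of the four intrinsic conditions (i), (ii), (iv) and (v), and this is precisely \cite[Thm.~3.2]{AlexeevKhartov23}. The mechanism there, which Section~\ref{S6} later lifts to the Banach-algebra-valued setting, is to view $\widehat{\mu}$, for discrete $\mu$, as a uniformly almost periodic function with countable spectrum, i.e. a generalized Fourier series, lying in the commutative Banach algebra of such series. Under (ii) this series is bounded away from zero, hence invertible in that algebra; one then constructs a distinguished logarithm $\psi$ of $\widehat{\mu}/\widehat{\mu}(0)$ that again belongs to the algebra, by a Wiener--L\'evy theorem, and reads off from the Fourier coefficients of $\psi$ a finite discrete complex L\'evy type measure, yielding (v). I would invoke their theorem verbatim for this block; it is the only step that is not formal, and I expect it to be the main obstacle, precisely because it requires the almost-periodic Wiener--L\'evy machinery rather than the Euclidean one used in Section~\ref{S4}.

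It then remains to connect this block to the factorization conditions (vi), (vii) and the discrete-inverse condition (iii), which is where Taylor's \cite[Cor.~8.3.3]{T73} enters. Granting (v), the complex L\'evy type measure $\nu$ is finite and discrete, so by the computation following \eqref{eq-exponential3} we may write $\mu = \delta_\gamma^n \ast \exp(\nu')$ with $\nu' = \nu + \alpha \delta_0^n$ a finite discrete complex measure, which is (vii); this closes the cycle via (vii)~$\Rightarrow$~(vi)~$\Rightarrow$~(i). Finally, to obtain (iii) I would use the same representation: the inverse $\delta_{-\gamma}^n \ast \exp(-\nu')$ is again discrete, since $\exp(-\nu')$ is a norm-convergent series of convolution powers of the discrete measure $-\nu'$ and the discrete measures form a closed subspace of $M(\bR^n)$, while convolution with $\delta_{-\gamma}^n$ preserves discreteness; alternatively one cites \cite[Cor.~8.3.3]{T73} directly. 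Assembling the block (i)~$\Leftrightarrow$~(ii)~$\Leftrightarrow$~(iv)~$\Leftrightarrow$~(v) with the cycle (v)~$\Rightarrow$~(vii)~$\Rightarrow$~(vi)~$\Rightarrow$~(i) and the branch (vii)~$\Rightarrow$~(iii)~$\Rightarrow$~(i) gives all seven equivalences.
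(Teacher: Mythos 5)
Your proof is correct, and its core coincides with the paper's: both simply invoke Alexeev--Khartov for the substantial equivalence and dispose of the rest by formal implications. The differences are in the periphery. First, a small attribution point: the paper credits \cite[Thm.~3.2]{AlexeevKhartov23} with the equivalence of (ii)--(v) --- so with the \emph{discrete-inverse} condition (iii) rather than with invertibility (i) in the full algebra $M(\bR^n)$ --- and then inserts (i) via the trivial chain (iii) $\Rightarrow$ (i) $\Rightarrow$ (ii); your block ``(i) $\Leftrightarrow$ (ii) $\Leftrightarrow$ (iv) $\Leftrightarrow$ (v) is precisely their theorem'' overstates what the citation delivers, but this creates no gap because you independently record (i) $\Rightarrow$ (ii) and return to (i) through (v) $\Rightarrow$ (vii) $\Rightarrow$ (vi) $\Rightarrow$ (i). Second, for (vii) and (iii) you take the route the paper only mentions as an alternative: you derive (v) $\Rightarrow$ (vii) from the computation following \eqref{eq-exponential3} together with the observation that the exponential of a discrete measure is discrete (convolution powers of discrete measures are discrete and the discrete measures form a norm-closed subspace of $M(\bR^n)$), and then get (iii) from (vii) by the same discreteness argument applied to $\delta_{-\gamma}^n \ast \exp(-\nu')$. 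The paper instead quotes Taylor~\cite[Cor.~8.3.3]{T73} for (i) $\Rightarrow$ (vii) and obtains (iii) for free from the Alexeev--Khartov citation. Your version is marginally more self-contained (Taylor's corollary becomes optional), at the cost of having to justify the closedness of the discrete measures, which you do correctly.
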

As stated above, the equivalence of (ii) -- (v) can be found directly in \cite[Thm.~3.2]{AlexeevKhartov23}. Further, (iii) obviously implies (i), which in turn implies (ii) as mentioned in the introduction, so that (i) -- (v) are indeed equivalent.  That (i) implies (vii) follows from Corollary~8.3.3 in Taylor~\cite{T73}, and that (vii) implies (vi) which in turn implies (i) is clear, so that all assertions are indeed equivalent. A different proof of the equivalence of  (v) and  (vii) can be obtained from the discussion in the preliminaries following Equation~\eqref{eq-exponential3}.

Actually, in \cite[Thm. 3.2]{AlexeevKhartov23} some further equivalent conditions are given and the countable set carrying  $\nu$ in (vi) is further specified, but for our purpose the statement as given here is quite sufficient.

Alexeev and Khartov~\cite[Thm.~2.2]{AlexeevKhartov23} also obtain a Cram\'er-Wold device for invertibility and for membership in $\mathrm{CLK}_0$ for discrete distributions. The result in \cite{AlexeevKhartov23} is stated for discrete probability measures $\mu$, but the proof given there, which shows that for discrete probability measures,  $\inf_{z\in \bR^n} |\widehat{\mu}(z)| > 0$ if and only if $\inf_{x\in \bR} |\widehat{\mu}(xa)| > 0$ for all $a\in S^{n-1}$,
 carries over to discrete complex measures without changes. We shall not need that result in the following, but state it for completeness.

\begin{theorem} {\rm (Cram\'er--Wold device for invertibility and for $\mathrm{CLK}_0$, cf. Alexeev and Khartov~\cite[Thm.~2.2]{AlexeevKhartov23}).} \label{t-Khartov-Cramer-Wold}
Let $n\geq 2$ and $\mu \in M(\bR^n)$ be discrete and denote by $\pi_a:\bR^n \to \bR a$ the projection onto the line $\bR a$, which is identified with $\bR$, and denote by $\pi_a(\mu)$ the image measure of $\mu$ under $\pi_a$. Then the following are equivalent:
\begin{enumerate}
\item[(i)] $\mu$ is invertible.
\item[(iv)] $\mu \in \CLK$.
\item[(viii)] $\pi_a(\mu)$ is invertible for all $a\in S^{n-1}$.
\item[(ix)] $\pi_a(\mu) \in \CLKeins$ for all $a\in S^{n-1}$.
\end{enumerate}
\end{theorem}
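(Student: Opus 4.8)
The plan is to reduce all four conditions, by means of the already-established Theorem~\ref{t-Khartov}, to statements asserting that $\widehat{\mu}$ is bounded away from zero, and then to invoke the genuine Cram\'er--Wold estimate of Alexeev and Khartov. Two structural facts about the projections come first. For each $a\in S^{n-1}$ the image measure $\pi_a(\mu)$ is again a \emph{discrete} complex measure on $\bR$, because the image of the countable carrier of $\mu$ under the linear map $\pi_a$ is countable; and, identifying $\bR a$ with $\bR$, its characteristic function is $\widehat{\pi_a(\mu)}(x)=\widehat{\mu}(xa)$ for $x\in\bR$. With these in hand, Theorem~\ref{t-Khartov} does almost all of the work: applied in dimension $n$ to the discrete measure $\mu$ it yields the equivalence of (i) and (iv), while applied in dimension $1$ to each discrete measure $\pi_a(\mu)$ it yields that $\pi_a(\mu)$ is invertible if and only if $\pi_a(\mu)\in\CLKeins$, hence the equivalence of (viii) and (ix).

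It therefore remains to connect the pair (i), (iv) with the pair (viii), (ix), and for this I would pass to condition (ii) of Theorem~\ref{t-Khartov}. In dimension $n$ that theorem gives that (i) holds if and only if $\inf_{z\in\bR^n}|\widehat{\mu}(z)|>0$; in dimension $1$, applied to $\pi_a(\mu)$ and using $\widehat{\pi_a(\mu)}(x)=\widehat{\mu}(xa)$, it gives that $\pi_a(\mu)$ is invertible if and only if $\inf_{x\in\bR}|\widehat{\mu}(xa)|>0$. Thus the entire theorem is equivalent to the single assertion
\begin{equation*}
\inf_{z\in\bR^n}|\widehat{\mu}(z)|>0 \quad\Longleftrightarrow\quad \inf_{x\in\bR}|\widehat{\mu}(xa)|>0 \ \text{ for all } a\in S^{n-1}.
\end{equation*}
The implication ``$\Rightarrow$'' is immediate, since every line $\bR a$ is a subset of $\bR^n$.

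The implication ``$\Leftarrow$'' is the heart of the matter and the step I expect to be the main obstacle. One cannot argue naively here: for a discrete measure $\widehat{\mu}(z)=\sum_{c}\mu(\{c\})\,\re^{\ri z^Tc}$ is a uniformly almost periodic function (the masses being absolutely summable), and the difficulty is precisely to upgrade the per-direction lower bounds into a single uniform one. Arguing by contraposition, if $\inf_{z}|\widehat{\mu}(z)|=0$ one selects $z_k$ with $\widehat{\mu}(z_k)\to0$. If $(z_k)$ is bounded, continuity finishes the argument: a limit point $z_*$ satisfies $\widehat{\mu}(z_*)=0$, and either $z_*\neq0$, so $z_*$ lies on the line $\bR(z_*/|z_*|)$, or $z_*=0$, forcing $\widehat{\mu}(0)=0$, which contradicts the per-line bound for every $a$. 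The genuinely hard case is $|z_k|\to\infty$: passing to a subsequence one has $z_k/|z_k|\to a$, but the naive comparison of $\widehat{\mu}(z_k)$ with values on the line $\bR a$ fails, because the positional error $|z_k|\,\big|\,z_k/|z_k|-a\,\big|$ need not tend to zero. Transferring near-vanishing along a blowing-up sequence into near-vanishing along one fixed line is exactly the delicate almost-periodicity estimate carried out in Alexeev and Khartov~\cite[Thm.~2.2]{AlexeevKhartov23}. I would invoke their argument directly, observing (as noted before the statement) that it uses only absolute summability of the masses and nowhere positivity or the probability normalisation, so it carries over verbatim from discrete probability measures to discrete complex measures. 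This closes the bridge and establishes the equivalence of (i), (iv), (viii) and (ix).
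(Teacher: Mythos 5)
Your proposal is correct and follows essentially the same route as the paper: both reduce (i)$\Leftrightarrow$(iv) and (viii)$\Leftrightarrow$(ix) to Theorem~\ref{t-Khartov} applied in dimensions $n$ and $1$ respectively, and then bridge the two pairs via the single equivalence $\inf_{z\in\bR^n}|\widehat{\mu}(z)|>0 \Leftrightarrow \inf_{x\in\bR}|\widehat{\mu}(xa)|>0$ for all $a\in S^{n-1}$, which is delegated to the proof of Alexeev and Khartov~\cite[Thm.~2.2]{AlexeevKhartov23} with the observation that it uses only absolute summability and so carries over from probability to complex measures. Your additional remarks on why the unbounded case $|z_k|\to\infty$ is the genuinely delicate step are accurate but not part of the paper's (very short) argument.
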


\section{Some results on Banach algebra valued functions} \label{S6}

\subsection{Motivation} \label{S-motivation}
Denote by $\GFSC$ the set of all functions $f:\bR^n \to \bC$ that can be written in the form  $f(z) = \sum_{x\in B} \re^{\ri x^T z} a_x$ for some countable set $B$ and an absolutely summable sequence $(a_x)_{x\in B}$ of complex numbers; the notion \lq GFS\rq~stands for \lq generalized Fourier series\rq. It is easily seen that $\GFSC$ is exactly the class of characteristic functions of discrete complex measures on $(\bR^n,\cB^n)$.
Then a discrete complex measure $\sum_{x\in B} a_x \delta_x$, with characteristic function $f$, is invertible in $M(\bR^n)$ if and only if there is some $f' \in \GFSC$ such that $f'(z) f(z) = 1$ for all $z\in \bR^n$; this is clear since the inverse of a discrete complex measure, if existent, must be discrete (see also Proposition~\ref{p-Taylor4}). For a function $f\in \GFSC$, the equivalent conditions (i), (ii) and (v) of Theorem~\ref{t-Khartov} can then be restated as:
\begin{enumerate}
\item[(i)] There exists $f'\in \GFSC$ such that $f'(z) f(z) = 1$ for all $z\in \bR^n$.
\item[(ii)] $f(z) \neq 0$ for all $z\in \bR^n$ and $\sup_{z\in \bR^n} |f(z)^{-1}| < \infty$.
\item[(v)] There exist $\gamma\in \bR^n$ and $g\in \GFSC$ such that
\begin{equation} \label{eq-7-a}
f(z) = f(0) \, \exp \left( \ri \gamma^T z + g(z) - g(0) \right) \quad \forall\; z \in \bR^n
\end{equation}
($g$ is the characteristic function of $\nu$ appearing in Theorem~\ref{t-Khartov}~(v)).
\end{enumerate}
Now if $\mu \in M(\bR^n)$ is of the form $\mu = \alpha \delta_0^n + h \, \lambda^q \otimes \zeta_C^{n-q}$ as in Proposition~\ref{p-Taylor3} with $q\in \{1,\ldots, n-1\}$ ($h$ taking the role of $f_i$ for the moment), then it will be shown in~\eqref{eq-mu-drei} below that the characteristic function $\widehat{\mu}$ is of the form $\widehat{\mu}(\cdot, z_2) = \sum_{y\in C\cup \{0\}} \re^{\ri z_2^T y} a_y$ for  an absolutely summable sequence $(a_y)_{y\in C\cup \{0\}}$ in a suitable Banach algebra $A$. Denoting the set of all such functions by $\GFSQ$, invertibility of $\mu$ will then mean that we can find some $f' \in \GFSQ$ such that $f'(z_2) \widehat{\mu}(\cdot, z_2)$ is equal to $\widehat{\delta_0^q} = 1$ for all $z_2\in \bR^{n-q}$, and we will be looking for a representation as in \eqref{eq-7-a} for $\widehat{\mu}(\cdot, z_2)$. This will be achieved in Theorem~\ref{t-characterisation}, which will then be applied in Section~\ref{S7} for the treatment of the case $q\in \{1,\ldots, n-1\}$. For the proof of Theorem~\ref{t-characterisation} we
establish some results on Banach algebra valued functions which may be interesting in their own right.

\subsection{Generalities on Banach algebras} \label{S-Banach-general}

In the following, all Banach algebras will be complex Banach algebras. We refer to the monographs by Allan~\cite{Allan} or Kaniuth~\cite{Kaniuth} for general information regarding Banach algebras, but shortly recall the necessary definitions and facts about them.  A \emph{commutative Banach algebra} is a complex Banach space $A$ together with a multiplication $A\times A \to A$, $(a,b) \mapsto ab$, such that $a(bc) = (ab)c$, $ab=ba$, $\alpha (ab)  =(\alpha a) b$, $a(b+c) = ab + ac$ and such that $\|ab\|_A \leq \|a\|_A \, \|b\|_A$ for all $a,b,c\in A$ and $\alpha \in \bC$, where $\|\cdot\|_A$ denotes the norm on the Banach space. The zero-element in $A$ is denoted by either $0$ or $0_A$. If there exists an element $e_A$ in $A$ such that $a e_A =a$ for all $a\in A$ and $\|e_A\|_A = 1$, then~$A$ is called \emph{unital}. The element $e_A$ is called the \emph{identity} of $A$ and then necessarily unique. An element $a\in A$ in a commutative unital Banach algebra is \emph{invertible} if there exists some $a^{-1}\in A$ such that $a^{-1} a  = e_A$. The element $a^{-1}$ is the \emph{inverse} of $a$ which is then necessarily unique. We denote the set of all invertible elements in $A$ by~$A^{-1}$. The spectrum $\mbox{Sp}_A (a)$ of $a\in A$ is the set of all $\alpha \in \bC$ such that $a-\alpha e_A$ is not invertible. In a commutative unital Banach algebra, the \emph{exponential} $\exp (a)$ of some element $a\in A$ is defined as
$\exp(a) := \sum_{j=0}^\infty \frac{1}{j!} a^j$,
where $a^0 := e_A$. Since $A$ is commutative, we have $\exp(a + b) = \exp(a) \exp(b)$ for $a,b\in A$. In particular, $\exp(a)$ is invertible with inverse $\exp(-a)$, so that $\exp(A) := \{ \exp(a) : a \in A\} \subset A^{-1}$. The set $A^{-1} \subset A$ is a group under multiplication and carries the relative topology of $A$ in $A^{-1}$. It is known that an element $b\in A$ is in $\exp(A)$ if and only if it is in the connected component of $A^{-1}$ that contains $e_A$, see e.g.~Allan~\cite[Theorem 4.105]{Allan}, Kaniuth~\cite[Theorem 3.2.6]{Kaniuth} or Taylor~\cite[Section 1.6.1]{T72}. An element $b\in A$ is said to \emph{have a logarithm in $A$} if there exists some $a\in A$ such that $\exp(a) = b$, i.e. if $b\in \exp(A)$, and any such number $a$ is then called a \emph{logarithm} of $b$.
Let us recall some examples of Banach algebras which are relevant for us.

\begin{example} \label{ex-Banach-algebra-1}
(a) The complex numbers $\bC$, with the usual addition, multiplication and norm, clearly form a unital commutative Banach algebra.\\
(b) The set $M(\bR^n)$ with the usual addition, multiplication by scalars, convolution as multiplication and the total variation norm as norm is a  commutative unital Banach algebra with identity $\delta_0^n$ (e.g.~Cohn~\cite[Props.~4.1.8, 9.4.6]{Cohn2013}; actually, this is true more generally for the space of regular complex measures on locally compact abelian groups). The notions of invertibility and of the exponential of $\mu \in M(\bR^n)$ introduced in the introduction coincide with the corresponding notions just given for the Banach algebra $M(\bR^n)$.\\
(c) Let $L(\bR^n)$ be the set of all $\mu \in M(\bR^n)$ that are absolutely continuous with respect to $n$-dimensional Lebesgue measure $\lambda^n$ and $\bC \delta_0^n + L(\bR^n)$ the set of all $\nu \in M(\bR^n)$ that can be written as $\alpha \delta_0^n + \mu$ with $\alpha \in \bC$ and $\mu \in L(\bR^n)$. By the Radon-Nikodym theorem, $L(\bR^n)$ can be identified with $L^1(\bR^n)$ and $\bC \delta_0^n + L(\bR^n)$ with the set of all $\nu\in M(\bR^n)$ that are absolutely continuous with respect to $\delta_0^n + \lambda^n$. Then both $L(\bR^n)$ and $\bC \delta_0^n + L(\bR^n)$ are closed subspaces of $M(\bR^n)$ (e.g. Cohn~\cite[Exercise 4.2.7]{Cohn2013}) and the convolution of two elements in $L(\bR^n)$ is again in $L(\bR^n)$, and the same is true for $\bC \delta_0^n + L(\bR^n)$. Hence both $L(\bR^n)$ and $\bC \delta_0^n + L(\bR^n)$ are closed subalgebras, hence themselves commutative Banach algebras. While $\bC \delta_0^n + L(\bR^n)$ is unital, $L(\bR^n)$ is not. However, every non-unital commutative Banach algebra $A$ can be embedded into a commutative unital Banach algebra, the \emph{unitisation} of $A$ (see e.g. Allan~\cite[Sect.~4.3]{Allan} or Kaniuth~\cite[comment after Ex. 1.1.7]{Kaniuth}), and it is easily seen that $\bC \delta_0^n + L(\bR^n)$ is the unitisation of $L(\bR^n)$.\\
(d) Let $G$ be a locally compact abelian group with Haar measure $H$, e.g.~$\bR^n$ or $\bR^n_d$. Then $L^1(G)$   forms a commutative  Banach algebra with the usual $L^1$-norm and the convolution on $L^1(G)$ as product (e.g.~\cite[Sect.~1.3]{Kaniuth}). When $G=\bR^n$, $L^1(\bR^n)$ can be identified with $L(\bR^n)$, and when $G=\bR^n_d$, then $L^1(\bR^n_d)$ can be identified with the set of all discrete complex measures in $M(\bR^n)$.\\
(e) We now generalise (d) when $G=\bR^n_d$ to the case where the functions may take values in a Banach algebra. So let $A$ be a commutative unital Banach algebra. Then $L^1(\bR^n_d,A)$ consists of all functions $f:\bR^n \to A$  for which there exists a countable set $B\subset \bR^n$ such that $f(z) = 0$ whenever $z\in \bR^n \setminus B$ and $\sum_{z\in B} \|f(z)\|_A <\infty$. With the aid of the indicator function, $f$ can be written as $f=\sum_{x\in B} a_x \one_{\{x\}}$, where $(a_x)_{x\in B}$ is an absolutely summable sequence in $A$ (we have $a_x = f(x)$ for $x\in B$). The convolution for $f,g \in L^1(\bR^n_d,A)$ with $f= \sum_{x\in B} a_x \one_{\{x\}}$ and $g= \sum_{x\in C} c_x \one_{\{x\}}$ is given by
$f\ast g (z) = \sum_{x\in C} f(z-x) g(x) = \sum_{x\in B} g(z-x) f(x)$, $z \in \bR^n$,
and the $L^1$-norm of $f= \sum_{x\in B} a_x \one_{\{x\}} \in L^1(\bR^n_d,A)$ is defined by
$$\|f\|_{L^1(\bR^n_d,A)}  :=
\sum_{x\in B} \|a_x\|_A = \sum_{x\in B} \|f(x)\|_A.
$$
It is easily checked that with convolution and this norm, $L^1(\bR^n_d,A)$ becomes a commutative unital Banach algebra, where the identity $e_{L1d}$ is given by $e_{L1d} = e_A \one_{\{0\}}$, see also~\cite[p.~32]{Kaniuth}. The Banach algebra $L^1(\bR^n_d,A)$ can be identified with the projective tensor product $L^1(\bR^n_d) \widehat{\otimes}_\pi A$, see~\cite[Prop.~1.5.4]{Kaniuth}.\\
(f) Let $A$ be a commutative unital Banach algebra and denote by $\GFSA$ (standing for \lq generalised Fourier series\rq) the set of all $A$-valued functions on $\bR^n$ of the form $z\mapsto \sum_{x\in B} \re^{\ri x^T z} a_x$ for some countable set $B\subset \bR^n$ and absolutely summable elements $(a_x)_{x\in B}$ in $A$. Endowed with the usual addition and multiplication by scalars, and with the usual multiplication of functions, this becomes an algebra with identity $e_{GFS}$, where $e_{GFS}(z) = e_A$ for all $z\in \bR^n$. Consider the map
$$\Psi_A : L^1(\bR^n_d,A) \to \GFSA, \quad \sum_{x\in B} \one_{\{x\}} a_x \mapsto \left[ \bR^n \ni z \mapsto \sum_{x\in B} \re^{\ri x^T z} a_x \right].$$
It is easily seen that $\Psi_A$ is linear, surjective and maps convolution to products. Further, $\Psi_A$ is injective, for if $\sum_{x\in B} \re^{\ri x^T z} a_x = 0_A$ for all $z\in \bR^n$, then
$\sum_{x\in B} \re^{\ri x^T z} \varphi(a_x) =0$ for all $z\in \bR^n$ and all $\varphi$ in the Banach space dual $A^*$ of $A$. Since $z\mapsto \sum_{x\in B} \re^{\ri x^T z} \varphi(a_x)$ is the characteristic function of the complex measure $\sum_{x\in B} \varphi(a_x) \delta_x^n$, we conclude that $\varphi(a_x) = 0$ for all $x\in B$ and $\varphi \in A^*$, hence $a_x = 0$ for all $x\in B$ as a consequence of the Hahn--Banach theorem,
thus proving injectivity of $\Psi_A$. This shows that $\Psi_A$ is an algebra isomorphism from $L^1(\bR^n_d,A)$ onto $\GFSA$. Endowing $\GFSA$ with the norm
$$\|f\|_{\GFSA} := \|\Psi_A^{-1}(f)\|_{L^1(\bR^n_d,A)},$$
i.e.~$\|\sum_{x\in B} \re^{\ri x^T \cdot} a_x\|_{\GFSA} = \sum_{x\in B} \|a_x\|_A$,
it is clear that $\GFSA$ becomes a Banach algebra, isometrically isomorphic to $L^1(\bR^n_d,A)$. Invertibility of elements in the Banach algebra $\GFSA$ will be our main concern in Theorems~\ref{t-characterisation2} and~\ref{t-characterisation}.
\end{example}

Now let $A$ be a commutative Banach algebra, not necessarily unital. Then the \emph{Gelfand space} $\Delta(A)$ consists of all non-zero linear maps $\varphi:A \to \bC$ that are additionally multiplicative, i.e.~which satisfy $\varphi(ab) = \varphi(a) \varphi(b)$ for all $a,b\in A$. Every $\varphi \in \Delta(A)$ is bounded with operator norm $\leq 1$, see e.g.~\cite[Thm.~4.43]{Allan} or \cite[Lem.~2.1.5]{Kaniuth}).  In particular, $\Delta(A)$ is a subset of the Banach space dual $A^*$ of $A$. The space $\Delta(A)$ is equipped with the weak-$*$ topology, which is the weakest topology on $\Delta(A)$ making the maps $\Delta(A) \to \bC$, $\varphi \mapsto \varphi(a)$, for each $a\in A$ continuous. It is easily seen to coincide with the relative topology on $\Delta(A)$ of the weak-$*$-topology on $A^*$. The space $\Delta(A)$ is then a locally compact Hausdorff space (e.g. \cite[Thm.~2.2.3]{Kaniuth}). A commutative Banach algebra $A$ is called \emph{semisimple} if $\Delta(A)$ separates points, equivalently if for every $a\in A \setminus \{0_A\}$ there exists some $\varphi \in \Delta(A)$ such that $\varphi(a) \neq 0$.

When the commutative Banach algebra $A$ is also unital, then the Gelfand space $\Delta(A)$ is non-empty, compact, and each $\varphi\in \Delta(A)$ has operator norm 1 and satisfies
$\varphi(e_A) = 1$ and consequently $\varphi(a^{-1}) = 1/\varphi(a)$ for $a\in A^{-1}$, so that $\varphi(a) \neq 0$ for all $a\in A^{-1}$. Even more, an element $a\in A$ is invertible if and only if $\varphi(a) \neq 0$ for all $\varphi \in \Delta(A)$, and  the spectrum $\mbox{Sp}_A(a)$ of an element $a\in A$ is described by $\{\varphi(a) : a \in \Delta (A)\}$. All this can be found in \cite[Thm.~4.43, Cor.~4.47, Thm.~4.54]{Allan} or \cite[Thm.~2.1.2, Lem.~2.1.5, Thm.~2.2.3, Thm.~2.2.5]{Kaniuth}.

In the following, we will be particularly interested in the Banach algebras $L^1(\bR^n)$, $L(\bR^n)$, $\bC \delta_0^n + L(\bR^n)$, $L^1(\bR^n_d,A)$ and $\GFSA$.

\begin{example} \label{ex-Banach-algebra-2}
(a) For the study of $L^1(\bR^n)$, let more generally $G$ be a locally compact abelian group, e.g. $\bR^n$ or $\bR^n_d$.  It is known that $L^1(G)$ is semisimple, see \cite[Cor.~4.35]{Folland} or \cite[Cor.~2.7.9]{Kaniuth}. For the description of $\Delta (L^1(G))$, recall that a \emph{character} on $G$ is a continuous mapping $\beta:G \to \{ w \in \bC: |w| = 1\}$ such that $\beta(g+h) = \beta(g) \beta(h)$ for all $g,h\in G$. The set of all characters is called the \emph{dual group of $G$} and denoted by $\widehat{G}$. For each $\beta \in \widehat{G}$ define $\varphi_\beta: L^1(G) \to \bC$ by $\varphi_\beta(f) := \int_G f(x) \overline{\beta(x)} \, H(\di x)$, $f\in L^1(G)$ (where $H$ is the Haar measure on $G$). Then $\varphi_\beta \in \Delta(L^1(G))$ and the mapping $\beta \to \varphi_\beta$ is a bijection from $\widehat{G}$ onto $\Delta (L^1(G))$, so that $\Delta (L^1(G))$ can be identified with $\widehat{G}$, see \cite[Thm.~2.7.2]{Kaniuth}. The topology on $\widehat{G}$, induced by this bijection and the weak-$*$ topology on $\Delta(L^1(G))$, coincides with the compact-open topology on $\widehat{G}$, see \cite[Thm.~2.7.5]{Kaniuth}. When $G=\bR^n$, then the dual group (and hence the Gelfand space $\Delta(L^1(\bR^n))$ can be identified with $\bR^n$ with the euclidian topology, see \cite[Ex.~2.7.6~(1,4)]{Kaniuth}. In particular, the Gelfand space of $L^1(\bR^n)$ (and hence of $L(\bR^n)$) is connected, and $L^1(\bR^n)$ is semisimple, but not unital.\\
(b) Since every character on $\bR^n$ is also a character on $\bR^n_d$, we have $\widehat{\bR^n_d} \supset \widehat{\bR^n}$ and hence $\Delta(L^1(\bR^n_d)) \supset \Delta (L^1(\bR^n))$.
Since $\widehat{\bR^n} =\bR^n$, it follows from the Bohr compactification (e.g.~Rudin~\cite[Thm.~1.8.2]{Rudin1962}), interchanging the roles of the group and the dual group, that $\widehat{\bR^n}$ can be continuously embedded as a dense subgroup of $\widehat{\bR^n_d}$. Since $\widehat{\bR^n}  =\bR^n$ is connected, so is any continuous image of it, hence also the closure of such a continuous image. It follows that $\widehat{\bR^n_d}$ and hence $\Delta(L^1(\bR^n))$ are connected, so that $L^1(\bR^n_d)$ is a semisimple commutative unital Banach algebra with connected Gelfand space.\\
(c) As seen in Example~\ref{ex-Banach-algebra-1}~(c), $\bC \delta_0^n + L(\bR^n)$ is the unitisation of $L(\bR^n)$, and $L(\bR^n)$ can be identified with $L^1(\bR^n)$, whose Gelfand space is identified with $\bR^n$. But the Gelfand space of a unitisation is the one-point compactification of the underlying Gelfand space (see \cite[Thm.~2.2.3]{Kaniuth}), hence $\Delta ( \bC \delta_0^n + L(\bR^n))$ can be identified with the one-point compactification of $\bR^n$, in particular it is connected, and $\bC \delta_0^n + L(\bR^n)$ is semisimple (the latter follows from \cite[Rem.~2.1.3]{Kaniuth}).\\
(d) Let $A$ be a commutative unital Banach algebra and consider $L^1(\bR^n_d,A)$ as introduced in Example~\ref{ex-Banach-algebra-1}~(e). Then the Gelfand space of $L^1(\bR^n_d,A)$ can be identified  with the topological product space $\Delta (L^1(\bR^n_d)) \times \Delta (A)$, see \cite[Prop.~1.5.4, Lem.~2.11.1, Thm.~2.11.2]{Kaniuth} (we will take this up in more detail again in the proof of Theorem~\ref{t-characterisation2}). In particular, if $\Delta(A)$ is connected, then so is $\Delta(L^1(\bR^n_d,A))$, since the topological product of connected spaces is connected. If $A$ is semisimple, then so is $L^1(\bR^n_d, A)$, see \cite[Thm.~2.11.8]{Kaniuth}. Since $\GFSA$, as introduced in Example~\ref{ex-Banach-algebra-1}~(f), is isometrically isomorphic to $L^1(\bR^n_d,A)$, similar statements hold for $\GFSA$.\\
(e) The Gelfand space of $M(\bR^n)$ is quite complicated, see Taylor~\cite[Rem.~1.3.5]{T73}.
We shall not go into further detail. We remark however that $M(\bR^n)$ is semisimple, see e.g. Folland~\cite[Cor.~4.35]{Folland}.
\end{example}

\subsection{The distinguished logarithm of Banach algebra valued functions}
Let $A$ be a commutative unital Banach algebra and
suppose that $f:\bR^n \to A$ is continuous such that $f(z) \in A^{-1}$ for all $z\in \bR^n$. Is it then possible to choose a \emph{continuous} logarithm $g:\bR^n \to A$ such that $f(z) = f(0) \exp(g(z))$ for all $z\in \bR^n$, and if so, is it unique? Such a result is clearly true when $A=\bC$ when $g$ is the distinguished logarithm of $f/f(0)$ (see the end of Section~\ref{S-prelim}) and we shall now show that is still true when the Banach algebra $A$ is  semisimple with connected Gelfand space.
We need the following lemma.

\begin{lemma} \label{l-uniqueness}
Let $A$ be a semisimple commutative unital Banach algebra with connected Gelfand space $\Delta(A)$ and identity denoted by $e_A$. Let $a,b \in A$ such that $\exp(a) = \exp(b)$. Then
\begin{equation}  \label{eq-tilde2}
a- \varphi (a) e_A = b - \varphi(b) e_A \quad \forall\; \varphi \in \Delta(A)
\end{equation}
and
there is a constant $k\in \bZ$, which depends only on $a$ and $b$, such that
\begin{equation} \label{eq-tilde4}
\varphi(a) - \varphi(b) = 2\pi \ri k \quad \forall\; \varphi \in \Delta(A).
\end{equation}
\end{lemma}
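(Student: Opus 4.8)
The plan is to reduce everything to scalar statements by applying the characters $\varphi \in \Delta(A)$, and then to exploit the two standing hypotheses separately: connectedness of $\Delta(A)$ to pin down a single integer $k$, and semisimplicity to upgrade the resulting scalar identities to an identity in $A$. Since $A$ is unital, $\Delta(A)$ is non-empty, and each $\varphi \in \Delta(A)$ is bounded, multiplicative, and satisfies $\varphi(e_A) = 1$; these are exactly the facts recalled in Section~\ref{S-Banach-general}.

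First I would fix $\varphi \in \Delta(A)$ and apply it to the identity $\exp(a) = \exp(b)$. Because $\varphi$ is bounded, linear and multiplicative, it commutes with the absolutely convergent exponential series, so $\varphi(\exp(a)) = \sum_{j\ges 0} \varphi(a)^j/j! = \exp(\varphi(a))$, and likewise $\varphi(\exp(b)) = \exp(\varphi(b))$. Hence $\exp(\varphi(a)) = \exp(\varphi(b))$, which forces $\varphi(a) - \varphi(b) \in 2\pi \ri \bZ$ for every $\varphi \in \Delta(A)$.

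Next I would observe that the map $h\colon \Delta(A) \to \bC$, $\varphi \mapsto \varphi(a) - \varphi(b) = \varphi(a-b)$, is precisely the Gelfand transform of $a-b$, hence continuous by the very definition of the weak-$*$ topology on $\Delta(A)$. By the previous step $h$ takes values in the discrete set $2\pi \ri \bZ$, so its image $h(\Delta(A))$ is a connected subset of a discrete set, thus a single point. Since $\Delta(A)$ is connected, $h$ is constant: there is $k \in \bZ$, depending only on $a$ and $b$, with $\varphi(a) - \varphi(b) = 2\pi \ri k$ for all $\varphi \in \Delta(A)$. This is exactly \eqref{eq-tilde4}.

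Finally, to obtain \eqref{eq-tilde2}, I would set $c := a - b - 2\pi \ri k\, e_A$ and compute, for each $\varphi \in \Delta(A)$, that $\varphi(c) = \varphi(a) - \varphi(b) - 2\pi \ri k\, \varphi(e_A) = 2\pi \ri k - 2\pi \ri k = 0$, using $\varphi(e_A) = 1$. Semisimplicity of $A$ then yields $c = 0_A$, i.e. $a - b = 2\pi \ri k\, e_A = (\varphi(a) - \varphi(b))\, e_A$ for every $\varphi$; rearranging gives $a - \varphi(a) e_A = b - \varphi(b) e_A$, which is \eqref{eq-tilde2}. The only genuinely substantive points are recognizing that both hypotheses are really needed---connectedness to make the $\bZ$-valued function $h$ constant, and semisimplicity to pass from the scalar identities back to an identity in $A$---together with the observation that continuity of $h$ is already built into the topology of the Gelfand space; beyond these there is no serious analytic obstacle.
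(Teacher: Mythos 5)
Your proof is correct and follows essentially the same route as the paper's: apply characters to reduce to the scalar identity $\exp(\varphi(a))=\exp(\varphi(b))$, use continuity of the Gelfand transform of $a-b$ together with connectedness of $\Delta(A)$ to get a single integer $k$, and then use semisimplicity to conclude that $a-b-2\pi\ri k\,e_A$ vanishes. The only (cosmetic) difference is that the paper phrases the last step by applying a second character $\varphi'$ to $a-\varphi(a)e_A-b+\varphi(b)e_A$, whereas you package the same element as $c=a-b-2\pi\ri k\,e_A$; the argument is identical.
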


\begin{proof}
Since each $\varphi\in \Delta(A)$ is continuous, linear and multiplicative, we have
\begin{equation} \label{eq-exponential-phi}
\varphi (\exp (a)) = \varphi \left( \sum_{k=0}^\infty \frac{a^k}{k!} \right) = \sum_{k=0}^\infty \frac{\varphi (a)^k}{k!} = \exp (\varphi(a)),
\end{equation}
and similarly $\varphi(\exp(b)) = \exp(\varphi(b))$. Since, by assumption, $\varphi(\exp(a)) = \varphi(\exp(b))$, we conclude that for each $\varphi \in \Delta (A)$
there exists some $k_{\varphi}\in \bZ$ such that
$
\varphi(a)=\varphi(b)+2\pi \ri k_{\varphi}$.
Since $\Delta(A)$ is equipped with the weak-$\ast$-topology which makes all maps $\Delta(A) \to \bC$, $\varphi \mapsto \varphi(c)$, for $c\in A$ continuous, and since $\Delta(A)$ is connected by assumption, the set $\{\varphi(a-b): \varphi \in \Delta(A)\}$ is connected as well, so we conclude that $k_{\varphi}$ does not depend on~$\varphi$, giving \eqref{eq-tilde4}.
Now let $\varphi, \varphi' \in \Delta(A)$. Since $\varphi(a) - \varphi(b) = 2\pi \ri k$ we have
$\varphi'(\varphi(a) e_A - \varphi(b) e_A ) = 2 \pi \ri k$, and since also $\varphi'(a) - \varphi'(b) = 2 \pi \ri k$ by \eqref{eq-tilde4}, we conclude
$$\varphi' ( a - \varphi(A) e_A - b + \varphi(b) e_A) =  0 \quad \forall\; \varphi, \varphi' \in \Delta(A).$$
Fixing $\varphi$ and letting $\varphi'$ run through $\Delta(A)$ this implies \eqref{eq-tilde2} since $A$ is semisimple.
\end{proof}

\begin{theorem}[Distinguished logarithm for Banach algebra valued functions]\label{t-distinguished}
Let $A$ be a semisimple commutative unital Banach algebra  such that the Gelfand space $\Delta(A)$ is connected. Let $f:\bR^n \to A$ be a continuous function such that $f(z) \in A^{-1}$ for all $z\in \bR^n$. Then there is a unique continuous function $g:\bR^n \to A$ such that $g(0) = 0_A$ and
$$
f(z)=f(0)\exp(g(z)) \quad \forall\; z \in \bR^n.
$$
\end{theorem}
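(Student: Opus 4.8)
The plan is to normalise the problem, settle uniqueness directly from Lemma~\ref{l-uniqueness}, and then construct $g$ by lifting through the exponential map, the global patching being the only delicate point. Put $h := f(0)^{-1} f \cl \bR^n \to A$; then $h$ is continuous, $h(0) = e_A$, and $h(z)\in A^{-1}$ for all $z$. Since $h(\bR^n)$ is connected and contains $e_A$, it lies in the connected component of $A^{-1}$ through $e_A$, which by the characterisation of $\exp(A)$ recalled in Subsection~\ref{S-Banach-general} equals $\exp(A)$. Hence it suffices to find a unique continuous $g\cl \bR^n \to A$ with $g(0) = 0_A$ and $\exp(g(z)) = h(z)$ for all $z$. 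For uniqueness, if $g_1,g_2$ both work then $\exp(g_1(z)) = \exp(g_2(z))$ for every $z$, so \eqref{eq-tilde2} and \eqref{eq-tilde4} of Lemma~\ref{l-uniqueness} give an integer $k(z)$ with $g_1(z) - g_2(z) = 2\pi\ri k(z)\, e_A$. The left-hand side is continuous in $z$ and takes values in the discrete set $2\pi\ri\bZ\, e_A$ (discrete because $\|e_A\|_A = 1$), hence is constant on the connected set $\bR^n$; as it vanishes at $z=0$, we conclude $g_1 = g_2$.

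For existence I would first record that $\exp$ is a local homeomorphism near $0_A$: the series $L(a) := \sum_{j\ges 1} (-1)^{j-1} j^{-1} (a-e_A)^{\,j}$ converges for $\|a-e_A\|_A < 1$, defines a continuous map there with $L(e_A) = 0_A$ and $\exp(L(a)) = a$, and is a local inverse of $\exp$ around $0_A$. Since $\exp(a_0 + u) = \exp(a_0)\exp(u)$ by commutativity, near any $a_0$ the map $\exp$ is the composition of the local homeomorphism at $0_A$ with multiplication by the fixed invertible element $\exp(a_0)$, hence is a local homeomorphism at every point of $A$ onto its image in $\exp(A)$. Moreover Lemma~\ref{l-uniqueness} shows that the fibre of $\exp$ over any point of $\exp(A)$ is exactly an orbit $\{a_0 + 2\pi\ri k\, e_A : k\in\bZ\}$, and the $\bZ$-action $a\mapsto a + 2\pi\ri k\, e_A$ is free and properly discontinuous (because $\|2\pi\ri k\, e_A\|_A = 2\pi|k| \ges 2\pi$). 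Combined with the local homeomorphism property and the openness of $\exp$, this exhibits $\exp\cl A \to \exp(A)$ as a covering map.

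Finally, since $\bR^n$ is simply connected and locally path-connected and $h\cl \bR^n \to \exp(A)$ is continuous with $h(0) = \exp(0_A)$, the lifting theorem for covering maps produces a continuous $g\cl \bR^n \to A$ with $\exp\circ g = h$ and $g(0) = 0_A$, completing the construction. Equivalently, and avoiding covering-space language, one may define $g(z)$ by lifting the segment $t\mapsto h(tz)$, $t\in[0,1]$: subdividing $[0,1]$ finely enough that consecutive values of $h(tz)$ lie in a ball on which $L$ applies, one concatenates the resulting increments to obtain a continuous logarithm along the segment and sets $g(z)$ equal to its value at $t=1$. The main obstacle in either route is the same, namely ruling out monodromy so that the locally defined logarithms patch into a single-valued continuous function; this is precisely where the connectedness of $\Delta(A)$ enters, through Lemma~\ref{l-uniqueness}, forcing every fibre of $\exp$ to be a discrete $\bZ$-orbit and hence every lift to be continuous and unambiguous in $z$. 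Continuity of $g$ and the identity $\exp(g(z)) = h(z)$ then follow from the local-homeomorphism property, while $g(0) = 0_A$ holds by construction.
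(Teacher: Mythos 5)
Your proof is correct, but the existence argument takes a genuinely different route from the paper's. Both proofs rest on Lemma~\ref{l-uniqueness}: combining \eqref{eq-tilde2} and \eqref{eq-tilde4} shows that $\exp(a)=\exp(b)$ forces $a-b\in 2\pi\ri\,\bZ\, e_A$, and your uniqueness argument is essentially the one in the paper. For existence, you upgrade this fibre description, together with local invertibility of $\exp$ via the logarithmic series, into the statement that $\exp\cl A\to\exp(A)$ is a covering map with deck group $2\pi\ri\,\bZ\, e_A$, and then lift $f(0)^{-1}f$ through it using simple connectedness of $\bR^n$. The paper instead avoids covering-space theory: it picks an arbitrary, a priori discontinuous, pointwise logarithm $g_1$, observes via \eqref{eq-tilde2} that the reduced function $z\mapsto g_1(z)-\varphi_0(g_1(z))e_A$ is unambiguous and therefore continuous (it agrees locally with the reduction of a continuous local logarithm built from the same series), and then restores the missing scalar part using the classical distinguished logarithm of the $\bC$-valued function $z\mapsto\varphi_0(f(0)^{-1}f(z))$. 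Your route is conceptually cleaner and makes transparent exactly where semisimplicity and connectedness of $\Delta(A)$ enter (they force the fibres of $\exp$ to be discrete $\bZ$-orbits); the paper's route is more elementary and, importantly, yields the explicit decomposition $g=G+\psi\, e_A$ with $G=g-\varphi_0(g)e_A$, which is reused verbatim in Lemmas~\ref{lem-ap} and~\ref{lem-ap2}. The only slightly compressed step in your write-up is the passage from ``local homeomorphism plus discrete fibres'' to ``covering map''; it is worth noting that the fibre description gives injectivity of $\exp$ on every ball of radius less than $\pi$, so that $\exp^{-1}(\exp(B(a_0,r)))=\bigcup_{k\in\bZ}\bigl(B(a_0,r)+2\pi\ri k\, e_A\bigr)$ is the required disjoint evenly covered decomposition. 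This is routine and does not affect correctness.
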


In analogy with the $\bC$-valued case, we shall call the function $g$ appearing in Theorem~\ref{t-distinguished} the \emph{distinguished logarithm of $f(0)^{-1} f$}.

\begin{proof}
Let us first show the existence of the function $g$.  Since $\bR^n$ is connected and since the function $\bR^n \to A^{-1}$, $z\mapsto f(0)^{-1} f(z)$, is continuous, its image is connected as well, so all elements $f(0)^{-1} f(z)$ lie in the connected component of $A^{-1}$ that contains the identity $e_A$, and as mentioned in Section~\ref{S-Banach-general}, that component is known to be $\exp(A) = \{ \exp(a) : a \in A\}$. Hence, for each $z\in \bR^n$, there exists some $g_1(z) \in A$ such that
$f(z) = f(0) \exp (g_1(z))$. Fix $\varphi_0 \in \Delta(A)$. We claim that
\begin{equation} \label{eq-tilde3}
\bR^n \to A, \quad z \mapsto g_1(z) - \varphi_0(g_1(z)) e_A \quad \mbox{is continuous.}
\end{equation}
To see this, let $z_0 \in \bR^n$. By continuity of $f$, there is an open neighbourhood $U_{z_0}$ of $z_0$ such that
$\|e_A - f(z_0)^{-1} f(z)\|_A < 1$ for all $z\in U_{z_0}$. However, on $\{ a \in A: \|e_A - a \|_A < 1\}$ a logarithm can be defined by the logarithmic series $a \mapsto \log a := -\sum_{k=1}^\infty k^{-1} (e_A - a)^k$ so that $\exp (\log(a)) = a$  (see e.g.~Allan~\cite[Thm. 4.100, Prop. 4.101]{Allan}), and it is easily seen that this map is continuous. Denoting then $g_2(z) := \log (f(0)^{-1} f(z))$ for $z \in U_{z_0}$, we see that $g_2$ is continuous on $U_{z_0}$ with $\exp (g_2(z)) = f(0)^{-1} f(z) = \exp (g_1(z))$. Then also $z\mapsto \varphi_0 (g_2(z)) e_A$ is continuous on $U_{z_0}$ and we conclude that $z\mapsto g_2(z) - \varphi_0 (g_2(z)) e_A$ is continuous on $U_{z_0}$. By \eqref{eq-tilde2}, the same is true for $z\mapsto g_1(z) - \varphi_0 (g_1(z)) e_A$. Since $z_0 \in \bR^n$ was arbitrary, this gives \eqref{eq-tilde3}.

Now define
$$h: \bR^n \to \bC, \quad z \mapsto \varphi_0 (f(0)^{-1} f(z)).$$
This is a continuous function with $h(0) = \varphi_0(e_A) = 1$ and $h(z) \neq 0$ for all $z\in \bR^n$. Denote by $\psi:\bR^n \to \bC$ the distinguished logarithm of $h$ and define $$g(z) := g_1(z) - \varphi_0 (g_1(z)) e_A + \psi(z) e_A \quad \forall\; z \in \bR^n.$$
Then $g$ is continuous by \eqref{eq-tilde3}, and for $z\in \bR^n$ we obtain with a similar calculation as in \eqref{eq-exponential-phi},
\begin{align*}
\exp(g(z)) & = \exp (g_1(z)) \, \varphi_0 (\exp (-g_1(z))) e_A \; h(z) e_A \\
& = \exp(g_1(z)) \, \varphi_0 (f(z)^{-1} f(0)) e_A \; \varphi_0(f(0)^{-1} f(z)) e_A \\
& = \exp (g_1(z)) = f(0)^{-1} f(z).
\end{align*}
Finally, $g(0) = g_1(0) - \varphi_0(g_1(0)) e_A$ which by \eqref{eq-tilde2} is equal to $g_2(0) - \varphi_0(g_2(0)) e_A$ with $g_2$ constructed on $U_0$ as before with $z_0=0$.  But $g_2(0) = 0_A$, hence $g(0) = 0_A$, finishing the proof of the existence of $g$.

To show uniqueness of $g$, let $g_3:\bR^n \to \bC$ be also a continuous function with $g_3(0) = 0_A$ and $f(z) = f(0) \exp (g_3(z))$ for all $z\in \bR^n$. From \eqref{eq-tilde2} and \eqref{eq-tilde4} we conclude
\begin{equation*}
g(z) - g_3(z) = \varphi_0 (g(z) - g_3(z)) e_A \in (2\pi \ri  \bZ)e_A.
\end{equation*}
Since $g-g_3$ is continuous, it must be constant, and since $g(0) = g_3(0)$ we conclude that $g_3(z) = g(z)$ for all $z\in \bR^n$, thus establishing uniqueness.
\end{proof}

\subsection{Almost periodic $A$-valued functions}

In Section~\ref{S7-5} we will aim at a characterisation of invertible elements of $\GFSA$. Elements of $\GFSA$ are in particular almost periodic, and as an auxiliary result, we shall now show that the distinguished logarithm of suitable $A$-valued almost periodic functions, when \lq suitably corrected\rq, is again almost periodic. We start with some definitions.

Let $A$ be a commutative unital Banach algebra with norm $\|\cdot\|_A$.
For a function $f:\bR^n \to A$ denote by
$$\| f \|_{\infty,A} := \sup_{z\in \bR^n} \|f(z)\|_A$$
the supremum norm of the $A$-valued function, and let $C_b(\bR^n,A)$ be the set of all bounded continuous $A$-valued functions $f$ on $\bR^n$, i.e.~which satisfy $\|f\|_{\infty,A} < \infty$. Endowed with the norm $\|\cdot\|_{\infty,A}$, it is easily checked that $C_b(\bR^n,A)$
becomes a unital commutative Banach algebra, with identity $e_{CbA}$ given by
$e_{CbA}(z) = e_A$ for all $z\in \bR^n$. As in Pankov~\cite[p.~7]{Pankov}, we call a function $f:\bR^d \to A$ \emph{almost periodic (in the sense of Bohr)}, if it is in $C_b(\bR^n,A)$ and for each $\varepsilon > 0$ there is a compact set $K_\varepsilon \subset \bR^n$ such that
each set of the form $\alpha + K_\varepsilon$ with $\alpha \in \bR^n$ contains at least one element $\tau_{\varepsilon,\alpha}$ satisfying $\|f(\tau_{\varepsilon,\alpha} + \cdot) - f\|_{\infty,A} < \varepsilon$. The set of all $A$-valued periodic functions on $\bR^n$ will be denoted by $\CAP$.

\begin{remark} \label{AP-equivalences}
(a) $\CAP$ is a Banach space with norm $\|\cdot\|_{\infty,A}$, see \cite[p.9]{Pankov}.\\
(b) Let $f\in C_b(\bR^n,A)$. Then the following are equivalent.
\begin{enumerate}
\item[(i)] $f$ is almost periodic (in the sense of Bohr), i.e.~$f\in \CAP$.
\item[(ii)] $f$ can be represented as a uniform limit of trigonometric polynomials, i.e. there exists a sequence $(f_k)_{k\in \bN}$ in $C_b(\bR^n,A)$, where each $f_k$ is of the form $f_k = \sum_{x\in B_k} \re^{\ri x^T \cdot} a_{k,x}$ for some finite set $B_k\subset \bR^n$ and $a_{k,x} \in A$, such that $\lim_{k\to\infty} \|f_k - f\|_{\infty,A} = 0$.
\item[(iii)] $f$ is almost periodic in the sense of Bochner, i.e.~for every sequence $(z_k)_{k\in \bN}$ the sequence $(f(z_k + \cdot))_{k\in \bN}$ is relatively compact in $C_b(\bR^n,A)$, i.e.~has a convergent subsequence.
\end{enumerate}
See Pankov~\cite[Thm. 1.2]{Pankov} for the equivalence of (i) and (iii). By  \cite[Prop.~1.3]{Pankov}, the space of trigonometric polynomials is dense in $\CAP$ with respect to convergence in $\|\cdot\|_{\infty,A}$, giving (together with (a)) the equivalence of (i) and (ii). Even more, $\CAP$ is the closure of the set of trigonometric polynomials with respect to $\|\cdot\|_{\infty,A}$ in $C_b(\bR^n,A)$.\\
(c) It follows easily from (ii) in (b) that $\CAP$ is closed under multiplication of functions and that $e_{CbA} \in \CAP$, hence $\CAP$ is a closed unital commutative Banach sub-algebra of $C_b(\bR^n,A)$ with identity $e_{CbA}$.\\
(d) Let $f\in \GFSA$ with $f(z) = \sum_{x\in B} \re^{\ri x^T z} a_x$, $B\subset \bR^n$ being countable and $(a_x)_{x\in B}$ being absolutely summable in $A$. Then $f$ is clearly in $C_b(\bR^n,A)$ and a uniform limit of trigonometric polynomials, hence is almost periodic. It follows that $\GFSA \subset \CAP$ and that  $\CAP$ is the closure of $\GFSA$ \emph{with respect to the norm $\|\cdot\|_{A,\infty}$}. Observe however that $\GFSA$, as a Banach algebra,  carries a norm different from $\|\cdot\|_{\infty,A}$, namely the norm introduced in Example~\ref{ex-Banach-algebra-1}~(f). In general (for example when $A=\bC$), we have $\GFSA \subsetneq \CAP$.
\end{remark}

We can now formulate and prove the aforementioned result regarding the \lq suitably corrected\rq~distinguished logarithm.

\begin{lemma} \label{lem-ap}
Let $A$ be a semisimple commutative unital Banach algebra with connected Gelfand space $\Delta(A)$. Let $f\in \CAP$ such that $f(z) \in A^{-1}$ for all $z\in \bR^n$ and assume that $\sup_{z\in \bR^n} \|f(z)^{-1}\|_A < \infty$. Denote by $g$ the distinguished logarithm of $f(0)^{-1} f$, as defined in Theorem~\ref{t-distinguished}. For fixed $\varphi_0 \in \Delta(A)$, define $G : \bR^n \to A$ by $G(z) := g(z) - \varphi_0(g(z)) e_A$. Then $G \in \CAP$.
\end{lemma}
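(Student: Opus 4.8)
The backbone of the argument is a cocycle identity for the corrected logarithm. For $t\in\bR^n$ set $f_t:=f(\cdot+t)$; then $f_t\in\CAP$, $f_t(z)\in A^{-1}$ for all $z$, and $\sup_z\|f_t(z)^{-1}\|_A=\sup_z\|f(z)^{-1}\|_A<\infty$, so by Theorem~\ref{t-distinguished} $f_t$ has a distinguished logarithm $g_t$ (with $f(z+t)=f(t)\exp(g_t(z))$ and $g_t(0)=0_A$), with associated corrected logarithm $G_t(z):=g_t(z)-\varphi_0(g_t(z))e_A$. Comparing $f(z+t)=f(0)\exp(g(z+t))$ with $f(z+t)=f(t)\exp(g_t(z))=f(0)\exp(g(t)+g_t(z))$ and applying \eqref{eq-tilde2} of Lemma~\ref{l-uniqueness} with $\varphi=\varphi_0$ yields
\[
G(z+t)=G_t(z)+G(t)\qquad(z,t\in\bR^n),
\]
so that $G(\cdot+t)=G_t+G(t)\,e_{CbA}$ in $C_b(\bR^n,A)$.

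The plan is to verify almost periodicity of $G$ through Bochner's criterion (Remark~\ref{AP-equivalences}(b)(iii)): once $G\in C_b(\bR^n,A)$ is known, it suffices that every sequence of translates $(G(\cdot+t_k))_k$ has a uniformly convergent subsequence. By the displayed identity this splits into two tasks: (a) relative compactness of $\{G_t:t\in\bR^n\}$ in $C_b(\bR^n,A)$, and (b) relative compactness in $A$ of the range $\{G(t):t\in\bR^n\}$.

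For (a) I would first use that, since $f\in\CAP$, Bochner's criterion makes the orbit closure $\mathcal{O}:=\overline{\{f_t:t\in\bR^n\}}$ compact in $C_b(\bR^n,A)$, and every $h\in\mathcal{O}$ again takes values in $A^{-1}$ with $\sup_z\|h(z)^{-1}\|_A\le M:=\sup_z\|f(z)^{-1}\|_A$ (the set $\{a\in A^{-1}:\|a^{-1}\|_A\le M\}$ is closed). The assignment $h\mapsto L[h]$, sending $h$ to the corrected logarithm of $h(0)^{-1}h$, is continuous from $\mathcal{O}$ into $C_b(\bR^n,A)$: locally on the relatively compact range of $h(0)^{-1}h$ the logarithm is given by the convergent series $a\mapsto-\sum_{k=1}^\infty k^{-1}(e_A-a)^k$ employed in the proof of Theorem~\ref{t-distinguished}, which depends continuously on $h$ uniformly in $z$, while the $\varphi_0$-correction is continuous by \eqref{eq-tilde2}. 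Continuity of $L$ on the compact set $\mathcal{O}$ gives that $L(\mathcal{O})=\overline{\{G_t\}}$ is compact, settling (a).

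Task (b) is the main obstacle, and it is precisely where connectedness of $\Delta(A)$ enters decisively. Here one must rule out a linear drift of $G$. Put $u:=\exp\circ G=\bigl(\varphi_0(f(0)^{-1}f)\bigr)^{-1}\,f(0)^{-1}f$; then $u\in\CAP$ with $u(z)\in A^{-1}$, $\varphi_0(u(z))=1$, and for each $\varphi\in\Delta(A)$ the scalar function $\varphi\circ G$ is the scalar distinguished logarithm of the nonvanishing almost periodic function $\varphi\circ u$, whose only possible unbounded part is a linear term $\ri\,(c_\varphi-c_{\varphi_0})^T z$ governed by the Bohr mean motion $c_\varphi\in\bR^n$ of $\varphi\circ f$. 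Thus $G$ is bounded exactly when $c_\varphi=c_{\varphi_0}$ for all $\varphi$. To obtain this I would show that $\varphi\mapsto c_\varphi$ is continuous for the weak-$\ast$ topology and that a nondegenerate continuum of values is incompatible with the almost periodicity of $f$: an $\varepsilon$-almost period $\tau$ of $f$ is, uniformly in $\varphi$, an $\varepsilon$-almost period of $\varphi\circ f$, which forces $\exp(\ri\,c_\varphi^T\tau)\approx 1$ simultaneously for all $\varphi$, something a relatively dense set of such $\tau$ cannot achieve unless $\{c_\varphi:\varphi\in\Delta(A)\}$ reduces to a point; connectedness of $\Delta(A)$ then pins $c_\varphi\equiv c_{\varphi_0}$. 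Consequently each $\varphi\circ G$ is bounded (indeed almost periodic) and $G$ is bounded; relative compactness of its range is then extracted from the compactness of $\mathcal{O}$ together with the continuity of $L$. Combining (a) and (b) via Bochner's criterion yields $G\in\CAP$. I expect the mean-motion step --- the continuity of $\varphi\mapsto c_\varphi$ and the quantitative clash between a nondegenerate family of mean motions and the relatively dense almost periods of $f$ --- to be the technically delicate heart of the proof.
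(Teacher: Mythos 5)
Your cocycle identity $G(z+t)=G_t(z)+G(t)$ is correct (it is exactly \eqref{eq-tilde2} applied to $\exp(g(z+t))=\exp(g(t)+g_t(z))$), and your task (a) — relative compactness of $\{G_t\}$ via the compact orbit closure of $f$ and the local logarithmic series — is sound and close in spirit to the estimates the paper actually performs. However, the additive splitting creates a problem the paper's proof deliberately avoids, and your treatment of it has two genuine gaps. First, Bochner's criterion applied to $G(\cdot+t)=G_t+G(t)e_{CbA}$ requires that $\{G(t):t\in\bR^n\}$ be \emph{relatively compact in $A$}, not merely bounded; in an infinite-dimensional Banach algebra boundedness does not give this, and your proposed source of compactness (``from the compactness of $\mathcal{O}$ together with the continuity of $L$'') cannot work as stated, because $L[f_t]=G_t$ satisfies $G_t(0)=0_A$ and therefore carries no information whatsoever about the value $G(t)$. (This is repairable: since $u:=\exp\circ G=f(0)^{-1}f/\varphi_0(f(0)^{-1}f)$ is almost periodic with relatively compact range inside $\exp(A)$, and since by \eqref{eq-tilde2} the map $L_0(a):=b-\varphi_0(b)e_A$, $b$ any logarithm of $a$, is a well-defined continuous map on $\exp(A)$ with $G=L_0\circ u$, the range of $G$ lies in the compact set $L_0(\overline{u(\bR^n)})$. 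But this argument is not in your proposal.)

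Second, the chain ``each $\varphi\circ G$ is bounded, hence $G$ is bounded'' is not valid: for a semisimple commutative unital Banach algebra the Gelfand transform is injective, but the spectral radius $\sup_{\varphi\in\Delta(A)}|\varphi(\cdot)|$ need not be equivalent to $\|\cdot\|_A$, so a set can be bounded under every $\varphi\in\Delta(A)$ and still be norm-unbounded. The paper proves $\|G\|_{\infty,A}<\infty$ directly, by covering $\bR^n$ with translates of a compact set $K_{\varepsilon_1}$ of $\varepsilon_1$-almost periods and writing $f(z)=f(y_z)\exp(\log(e_A+f(y_z)^{-1}(f(z)-f(y_z))))$ with $y_z\in K_{\varepsilon_1}$, then invoking \eqref{eq-tilde2}; no Gelfand-transform detour is needed. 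Beyond these two gaps, your route to boundedness leans on the several-variable Bohr/mean-motion theorem for scalar almost periodic functions, a heavy classical input that the paper never needs: its Bochner-criterion step computes the \emph{difference} $G(z_m+\cdot)-G(z_k+\cdot)=\bigl(l_m-\varphi_0(l_m)e_A\bigr)-\bigl(l_k-\varphi_0(l_k)e_A\bigr)$ with the reference term cancelling, and bounds it by $2\|l_m-l_k\|_A$ via the logarithmic series — which is precisely what makes the compactness-of-range issue disappear. As it stands, your proposal does not constitute a complete proof.
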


\begin{proof}
Replacing $f$ by $f(0)^{-1} f$ we can assume without loss of generality that $f(0) = e_A$. Then $f(z) = \exp (g(z))$ for all $z\in \bR^n$.
Fix $\varphi_0 \in \Delta(A)$ and let $G$ be defined as in the statement of the lemma.
Since $f\in \CAP$, by the definition of almost periodicity we conclude that for every $\varepsilon > 0$  there exists a compact set $K_\varepsilon\subset \mathbb{R}^n$ such that
\begin{align} \label{K-epsilon}
\sup\limits_{x\in \mathbb{R}^n} \inf_{y\in K_\varepsilon}\|{f}(y)-{f}(x)\|_A <\varepsilon/2.
\end{align}
Since $g$ is continuous, so is $G$. Our first goal is to prove that $G$ is bounded, the second to show that $G$ is almost periodic. To see that $G\in C_b(\bR^n,A)$, define $\varepsilon_1 :=  (\sup_{z \in \bR^n} \| f(z)^{-1}\|_A)^{-1}>0$. By \eqref{K-epsilon}, for every  $z\in \mathbb{R}^n$  there exists some $y_z\in K_{\varepsilon_1}$ such that $$\|f(y_z)-f(z)\|_A\leq \frac{1}{2\sup_{x\in \mathbb{R}^n}\|{f}(x)^{-1}\|_A}.$$ Writing
$${f}(z)={f}(y_z)(e_A+ {f}(y_z)^{-1}({f}(z)-{f}(y_z)))$$ and observing that $\|{f}(y_z)^{-1}({f}(z)-{f}(y_z))\|_A \leq 1/2$, as in the proof of Theorem~\ref{t-distinguished} we can apply the logarithmic series $\log(e_A+a) := \sum_{j=1}^\infty (-1)^{j+1} j^{-1} a^j$ when $\|a\|_A < 1$, so that $f(z) = \exp [h(z)]$ with
$$
{h}(z):= g(y_z)  + \log (e_A+ {f}(y_z)^{-1}({f}(z)-{f}(y_z))).$$
Since $a\mapsto \log(e_A + a)$ is obviously bounded on $\{a\in A : \|a\|_A\leq 1/2\}$, since $y_z$ is in the compact set $K_{\varepsilon_1}$ for every $z\in \R^n$, and since $g$ is continuous, this implies boundedness of $h$. Hence also $\bR^n\to A$, $z \mapsto h(z) - \varphi_0(h(z)) e_A$, is bounded. By \eqref{eq-tilde2}, the latter is equal to $G(z)$, showing boundedness of $G$ so that $G\in C_b(\bR^n,A)$.

To show that $G$ is almost periodic, we use the characterisation (iii) in Remark~\ref{AP-equivalences}~(b). So let $(z_k)_{k\in \bN}$ be a given sequence in $\bR^n$.
Since $f$ is almost periodic, there exists a convergent subsequence of $({f}(z_k+\cdot))_{k\in \bN}$ in $C_b(\mathbb{R}^n,A)$, and for simplicity we denote the corresponding subsequence again by $(f(z_k+\cdot))_{k\in \bN}$. Now let $\varepsilon > 0$. Since $(f(z_k+\cdot))_{k\in \bN}$ is a Cauchy sequence, there exists $k_0=k_0(\varepsilon) \in \mathbb{N}$ such that $\| f(z_k + \cdot) - f(z_m + \cdot)\|_{\infty,A} < \varepsilon$ for all $k,m\geq k_0$.
Write
\begin{align*}
 {f}(z_m+z)
=&{f}(z_{k_0}+z)(e_A+{f}(z_{k_0}+z)^{-1}({f}(z_m+z)-{f}(z_{k_0}+z))).
\end{align*}
When $\varepsilon < (2\sup_{x\in \bR^n} \|f(x)^{-1}\|)^{-1}$, we have
$\|{f}(z_{k_0}+z)^{-1}({f}(z_m+z)-{f}(z_{k_0}+z))\|_A < 1/2$ for all $m\geq k_0$ and $z\in \bR^n$, hence we can apply the logarithmic series, and with
\begin{align}
l_m(z) & := \log  \left[ e_A+{f}(z_{k_0}+z)^{-1}({f}(z_m+z)-{f}(z_{k_0}+z))\right] \nonumber \\
& =  \sum_{j=1}^\infty (-1)^{j+1} j^{-1} f(z_{k_0}+z)^{-j} (f(z_m+z) - f(z_{k_0} + z))^j \label{log-series}
\end{align}
we obtain
$$\exp [g(z_m + z)] = f(z_m + z) = \exp [g(z_{k_0} + z)] \, \exp [l_m(z)]$$
for all $m\geq k_0$ and $z\in \bR^n$. An application of \eqref{eq-tilde2} then gives
$$G(z_m+z) = G(z_{k_0} + z) + l_m(z) - \varphi_0(l_m(z)) e_A,$$
hence
\begin{align*}
\|G(z_m+z)-G(z_k+z)\|_A=&\|l_m(z)-\varphi_0(l_m(z))e_A-(l_k(z)-\varphi_0(l_k(z))e_A)\|_A\\
\leq&2\|l_m(z)-l_k(z)\|_A
\end{align*}
for all $k,m\geq k_0$ and $z\in \bR^n$ (observe that the operator norm of $\varphi_0$ is 1).
Using that
$$(a-b)^j - (c-b)^j = (a-c) \sum_{i=0}^{j-1} (a-b)^i (c-b)^{j-i-1}$$
for arbitrary $a,b,c\in A$ and $j\in \bN$, we obtain from the definition~\eqref{log-series} of $l_m(z)$
\begin{align*}
&\|l_m(z)-l_k(z)\|_A\\
\leq& \|{f}(z_m+z)-{f}(z_k+z)\|_A\\
&\cdot\sum\limits_{j=1}^\infty j^{-1} \|{f(z_{k_0}+z)}^{-1}\|_A^{j}\sum\limits_{i=0}^{j-1}\|{f}(z_m+z)-{f}(z_{k_0}+z)\|_A^i
\|{f}(z_k+z)-{f}(z_{k_0}+z)\|_A^{j-1-i}\\
\leq&\varepsilon \sum\limits_{j=1}^\infty \|{f}^{-1}\|_{\infty,A}^j \varepsilon^{j-1}
\end{align*}
whenever $z\in \bR^n$, $n,m\geq k_0(\varepsilon)$ and $\varepsilon$ is sufficiently small. Letting $\varepsilon\to 0$ we conclude that   $(G(z_k+\cdot))_{k\in \mathbb{N}}$ is a Cauchy sequence in $C_b(\bR^n,A)$, hence convergent. This establishes that $G$ is almost periodic.
\end{proof}

Observe that Lemma~\ref{lem-ap} does not give any new information when $A=\bC$, because $\Delta(\bC)$ consists only of the identity map on $\bC$, hence $G$ constructed in Lemma~\ref{lem-ap} is the zero-map.

\subsection{Invertibility in $\GFSA$} \label{S7-5}

We now come to the main goal of Section~\ref{S6}, namely the characterisation of invertible elements in the Banach algebra $\GFSA$, equivalently in $L^1(\bR^n_d,A)$. In particular, in Theorem~\ref{t-characterisation} we shall obtain a characterisation in terms of exponentials, similar to \eqref{eq-7-a}, where $A$ is a semisimple commutative unital Banach algebra with connected Gelfand space. Other characterisations, without assuming $A$ to be semisimple or to have connected Gelfand space, will already be obtained in Theorem~\ref{t-characterisation2}.

We know that $\GFSA\subset C_b(\bR^n,A)$ and both share the same identity, however, both Banach algebras carry different norms, namely $\|\cdot\|_{\GFSA}$ and $\|\cdot\|_{\infty,A}$, respectively. It is easily seen that
\begin{equation} \label{eq-diff-norms}
\|f\|_{\infty,A} \leq \|f\|_{\GFSA} \quad \forall\; f \in \GFSA,
\end{equation}
but not necessarily do we have equality. A natural question now is whether $\GFSA$ is \emph{inverse closed} in $C_b(\bR^n,A)$, that is whether every $f\in \GFSA$ that is invertible \emph{in $C_b(\bR^n,A)$} is  automatically invertible \emph{in $\GFSA$}; in other words, do we have $\GFSA \cap (C_b(\bR^n,A))^{-1} = (\GFSA)^{-1}$? This is indeed the case and we will prove it now together with some other characterisations of invertibility in $\GFSA$. Recall the isometric isomorphism $\Psi_A: L^1(\bR^n_d,A) \to \GFSA$ introduced in Example~\ref{ex-Banach-algebra-1}~(f).

\begin{theorem}[First characterisation of invertible elements in $\GFSA$] \label{t-characterisation2}
Let $A$ be a commutative unital Banach algebra and let $f\in \GFSA$. Then the following are equivalent:
\begin{enumerate}
\item[(i)] $f\in (\GFSA)^{-1}$, i.e.~$f$ is invertible in $\GFSA$.
\item[(ii)] $f \in (C_b(\bR^n,A))^{-1}$, i.e.~$f$ is invertible in $C_b(\bR^n,A)$.
\item[(iii)] $f(z) \in A^{-1}$ for all $z\in \bR^n$ and $\sup_{z\in \bR^n} \|f(z)^{-1}\|_A < \infty$.
\item[(iv)] $\Psi_A^{-1}(f) \in (L^1(\bR^n_d,A))^{-1}$, i.e.~$\Psi_A^{-1}(f)$ is invertible in $L^1(\bR^n_d,A)$.
\item[(v)]
The functions $\bR^n \to \bC$, $z\mapsto \varphi(f(z))$, are invertible in $\GFSC$ for all $\varphi \in \Delta(A)$.
\item[(vi)] $\inf_{z\in \bR^n} |\varphi(f(z))| > 0$ for all $\varphi \in \Delta(A)$.
\end{enumerate}
\end{theorem}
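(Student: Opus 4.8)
The plan is to run a single cycle of implications through all six conditions, disposing of the routine equivalences first and isolating the one genuinely substantial step. Since $\Psi_A\colon L^1(\bR^n_d,A)\to\GFSA$ is an isometric algebra isomorphism (Example~\ref{ex-Banach-algebra-1}(f)), conditions (i) and (iv) are literally the same statement, so I would settle (i) $\Leftrightarrow$ (iv) at once. For (i) $\Rightarrow$ (ii), I would use that $\GFSA$ is a subalgebra of $C_b(\bR^n,A)$ sharing the identity $e_{CbA}$ (Remark~\ref{AP-equivalences}(d)): an inverse of $f$ computed inside $\GFSA$ already lies in $C_b(\bR^n,A)$ and satisfies the defining identity there. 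The equivalence (ii) $\Leftrightarrow$ (iii) is pointwise: if $f$ is invertible in $C_b(\bR^n,A)$ its inverse $g$ satisfies $f(z)g(z)=e_A$, giving (iii) with $\sup_z\|f(z)^{-1}\|_A=\|g\|_{\infty,A}<\infty$; conversely, (iii) lets me set $g(z):=f(z)^{-1}$, which is bounded by hypothesis and continuous because inversion $A^{-1}\to A^{-1}$ is continuous and $f$ is continuous, so $g\in C_b(\bR^n,A)$ inverts $f$.

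For (iii) $\Rightarrow$ (vi) I would use that each $\varphi\in\Delta(A)$ is multiplicative with operator norm at most $1$: from $f(z)\in A^{-1}$ one gets $\varphi(f(z))\varphi(f(z)^{-1})=\varphi(e_A)=1$, whence $|\varphi(f(z))|=1/|\varphi(f(z)^{-1})|\ge 1/\|f(z)^{-1}\|_A\ge(\sup_z\|f(z)^{-1}\|_A)^{-1}>0$. For (vi) $\Leftrightarrow$ (v), I would first record that for each $\varphi\in\Delta(A)$ the function $g_\varphi:=\varphi\circ f$ lies in $\GFSC$: writing $f(z)=\sum_{x\in B}\re^{\ri x^Tz}a_x$, linearity and continuity of $\varphi$ give $g_\varphi(z)=\sum_{x\in B}\re^{\ri x^Tz}\varphi(a_x)$ with $\sum_{x\in B}|\varphi(a_x)|\le\sum_{x\in B}\|a_x\|_A<\infty$. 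Then Theorem~\ref{t-Khartov}, read through the $\GFSC$ reformulation in Section~\ref{S-motivation}, says precisely that $g_\varphi$ is invertible in $\GFSC$ if and only if $\inf_z|g_\varphi(z)|>0$, which is exactly (v) $\Leftrightarrow$ (vi).

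The one substantial step is (v) $\Rightarrow$ (iv), and here I would argue through the Gelfand space of $L^1(\bR^n_d,A)$. By Example~\ref{ex-Banach-algebra-2}(d) this Gelfand space is identified with $\Delta(L^1(\bR^n_d))\times\Delta(A)$, and $\Delta(L^1(\bR^n_d))$ with the dual group $\widehat{\bR^n_d}$; a character $(\chi_\beta,\varphi)$ acts on $\Psi_A^{-1}(f)=\sum_{x\in B}\one_{\{x\}}a_x$ by $(\chi_\beta\otimes\varphi)(\Psi_A^{-1}(f))=\sum_{x\in B}\overline{\beta(x)}\,\varphi(a_x)$. The key bookkeeping observation is that this value equals $\chi_\beta(\Psi_\bC^{-1}(g_\varphi))$, the Gelfand transform of $\Psi_\bC^{-1}(g_\varphi)\in L^1(\bR^n_d)$ at $\chi_\beta$. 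Now (v) gives that each $g_\varphi$ is invertible in $\GFSC$, equivalently $\Psi_\bC^{-1}(g_\varphi)$ is invertible in $L^1(\bR^n_d)$, so its Gelfand transform never vanishes on $\widehat{\bR^n_d}$; hence $(\chi_\beta\otimes\varphi)(\Psi_A^{-1}(f))\ne 0$ for every $\beta\in\widehat{\bR^n_d}$ and every $\varphi\in\Delta(A)$. Since these pairs exhaust the full Gelfand space of the commutative unital Banach algebra $L^1(\bR^n_d,A)$, the Gelfand invertibility criterion yields $\Psi_A^{-1}(f)\in(L^1(\bR^n_d,A))^{-1}$, which is (iv); note that this step uses neither semisimplicity nor connectedness of $\Delta(A)$. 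This closes the cycle (i) $\Rightarrow$ (ii) $\Rightarrow$ (iii) $\Rightarrow$ (vi) $\Rightarrow$ (v) $\Rightarrow$ (iv) $\Rightarrow$ (i). The main obstacle is precisely making the character action on the projective tensor product $L^1(\bR^n_d)\widehat{\otimes}_\pi A$ explicit and recognising that the \emph{discontinuous} characters of $\bR^n_d$, which are not visible through the point evaluations $z\mapsto\varphi(f(z))$, are already absorbed into the scalar result Theorem~\ref{t-Khartov}, whose validity rests on the density of $\widehat{\bR^n}$ in $\widehat{\bR^n_d}$ (Bohr compactification, Example~\ref{ex-Banach-algebra-2}(b)).
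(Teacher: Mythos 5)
Your proposal is correct and follows essentially the same route as the paper: the routine implications are handled identically, the equivalence of (v) and (vi) is reduced to the $\GFSC$ reformulation of Theorem~\ref{t-Khartov}, and the substantial step (v) $\Rightarrow$ (iv) is carried out exactly as in the paper via the identification $\Delta(L^1(\bR^n_d,A))\cong\Delta(L^1(\bR^n_d))\times\Delta(A)$ for the projective tensor product and the observation that each character factors through the invertible scalar element $\Psi_\bC^{-1}(\varphi\circ f)$.
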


\begin{proof}
Write $f(z) = \sum_{x\in B} \re^{\ri x^T z} a_x$ with $B\subset \bR^n$ countable and $(a_x)_{x\in B}$ being an absolutely summable sequence in $A$. The equivalence of (i) and (iv) is clear since~$\Psi_A$ is an isometric Banach algebra isomorphism. The implications \lq\lq $(i) \Longrightarrow (ii) \Longrightarrow (iii)$\rq\rq~are also clear. To see that (iii) implies (vi), let $\varphi\in \Delta(A)$. Then $\varphi(f(z)) = \sum_{x\in B} \re^{\ri x^T z} \varphi(a_x)$ with $(\varphi(a_x))_{x\in B}$ being absolutely summable in $\bC$, hence $z\mapsto \varphi(f(z))$ is in $\GFSC$. Since $f(z)\in A^{-1}$ for all $z\in \bR^n$ we have $\varphi(f(z)) \neq 0$ for all $z\in \bR^n$, and since $\varphi$ has operator norm 1 we conclude
$$\sup_{z\in \bR^n} |\varphi(f(z))|^{-1} = \sup_{z\in \bR^n} |\varphi(f(z)^{-1})| \leq \sup_{z\in \bR^n} \|f(z)^{-1}\|_A < \infty,$$
thus establishing (vi). That (vi) implies (v) follows from the equivalence of (i) and (ii) in Section~\ref{S-motivation}, i.e.~from the reformulation of Theorem~\ref{t-Khartov}.

It remains to show that (v) implies (i), equivalently that (v) implies (iv). Denote $h:= \Psi_A^{-1}(f)$, so that $h=\sum_{x\in B} \one_{\{x\}} a_x \in L^1(\bR^n_d,A)$. As mentioned in Section~\ref{S-Banach-general}, $h$ will be invertible in $L^1(\bR^n_d,A)$ if we can show that $\rho(h) \neq 0$ for all $h\in \Delta (L^1(\bR^n_d,A))$.
Now let $\rho \in \Delta (L^1(\bR^n_d,A))$ be arbitrary. Then $\rho(h) = \sum_{x\in B} \rho( \one_{\{x\}}a_x)$. By  Kaniuth~\cite[Prop.~1.5.4]{Kaniuth} we can identify
$L^1(\bR^n_d,A)$ with the projective tensor product $L^1(\bR^n_d) \widehat{\otimes}_\pi A$, where
$\one_{\{x\}} a_x$ is identified with the elementary tensor $\one_{\{x\}} \otimes a_x$. Further, by \cite[Lem.~2.11.1, Thm.~2.11.2]{Kaniuth}, $\Delta (L^1(\bR^n_d) \widehat{\otimes}_\pi A)$ can be identified with $\Delta (L^1(\bR^n_d)) \times \Delta(A)$, and to every $\rho' \in
\Delta (L^1(\bR^n_d) \widehat{\otimes}_\pi A)$ there exist $\xi \in \Delta(L^1(\bR^n_d))$ and $\varphi \in \Delta (A)$ such that $\rho'(g \otimes b) = \xi(g) \varphi(b)$ for all elementary tensors $g\otimes b$ with $g\in L^1(\bR^n_d)$ and $b\in A$ (and vice versa, $\xi$ and $\varphi$ determine some $\rho'$). Hence, identifying $\rho\in \Delta (L^1(\bR^n_d,A))$ with $\rho'\in \Delta (L^1(\bR^n_d) \widehat{\otimes}_\pi A)$ we find
$\rho(\one_{\{x\}} a_x) = \xi(\one_{\{x\}}) \varphi(a_x)$, so that
$$\rho(h) = \sum_{x\in B} \xi(\one_{\{x\}}) \varphi(a_x) = \xi \left( \sum_{x\in B} \one_{\{x\}}
\varphi(a_x)\right).$$ But
$$\Psi_\bC \left( \sum_{x\in B} \one_{\{x\}} \varphi(a_x)\right) (z)  = \sum_{x\in B} \re^{\ri x^T z} \varphi(a_x) = \varphi(f(z)),$$
and since $z\mapsto \varphi(f(z))$ is in $(\GFSC)^{-1}$, we have $\sum_{x\in B} \one_{\{x\}} \varphi(a_x)\in (L^1(\bR^n_d, \bC))^{-1}$, hence $\xi(\sum_{x\in B} \one_{\{x\}} \varphi(a_x))\neq 0$ for all $\xi\in \Delta(L^1(\bR^n_d))$ and $\varphi\in \Delta(A)$, giving invertibility of $h$. This completes the proof.
\end{proof}

Since $\GFSA$ is inverse closed in $C_b(\bR^n,A)$, the spectrum of $f\in \GFSA$ does not change when considered as an element in $C_b (\bR^n,A)$. In particular, for $f\in \GFSA$ we have
\begin{equation} \label{eq-spec-estimate}
\mbox{Sp}_{\GFSA}(f) = \mbox{Sp}_{C_b(\bR^n,A)}(f)  \subset \{ w \in \bC: |w| \leq \|f\|_{\infty,A}\},
\end{equation}
where the last inclusion follows from \cite[Thm.~1.2.8]{Kaniuth}. Observe that here we could use the norm $\|\cdot\|_{\infty,A}$ rather than the norm $\|\cdot\|_{\GFSA}$.

Next we establish a counterpart to Lemma~\ref{lem-ap} when $f\in \GFSA$.

\begin{lemma} \label{lem-ap2}
Let $A$, $\varphi_0$, $f$ and $G$ be as in Lemma~\ref{lem-ap}, in particular, $A$ is semisimple with connected Gelfand space. In addition to the assumptions of Lemma~\ref{lem-ap}, assume that $f\in \GFSA$. Then also $G\in \GFSA$.
\end{lemma}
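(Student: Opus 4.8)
The plan is to upgrade the conclusion $G\in\CAP$ of Lemma~\ref{lem-ap} to membership in $\GFSA$ by realising $G$ as a genuine logarithm \emph{inside the Banach algebra} $\GFSA$, obtained through the holomorphic functional calculus. First I would record that $\exp(G)$ already lies in $\GFSA$. Writing $\psi(z):=\varphi_0(g(z))$, the defining relation $f=f(0)\exp(g)$ of the distinguished logarithm and the identity $g=G+\psi\,e_A$ give $\exp(G)=e^{-\psi}\,f(0)^{-1}f$. Here $f(0)^{-1}f\in\GFSA$ since $f\in\GFSA$ and $f(0)\in A^{-1}$, while $e^{\psi}=\varphi_0(f(0)^{-1}f)$ is an element of $\GFSC$ that is invertible in $\GFSC$ by the scalar case of Theorem~\ref{t-characterisation2} (its values are nonzero, with $\sup_z|e^{-\psi(z)}|\le\|f(0)\|_A\sup_z\|f(z)^{-1}\|_A<\infty$); hence $e^{-\psi}\in\GFSC$ and $F:=\exp(G)\in\GFSA$.

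The crucial difficulty is that $\GFSA$ carries the norm $\|\cdot\|_{\GFSA}$, which dominates $\|\cdot\|_{\infty,A}$ by \eqref{eq-diff-norms} but is not comparable to it, so smallness in sup-norm does not give the smallness of $\|\cdot\|_{\GFSA}$ needed to run the logarithmic series directly. The device that circumvents this is the spectral estimate \eqref{eq-spec-estimate}, which bounds the spectral radius in $\GFSA$ by the sup-norm. Accordingly I would first reduce to the small-sup-norm case. Fix $\delta>0$ with $e^{\delta}<2$. By Remark~\ref{AP-equivalences}, $G$ is a uniform limit of trigonometric polynomials, so I can choose such a polynomial $P\in\GFSA$ with $\|G-P\|_{\infty,A}<\delta/2$; replacing $P$ by $P-P(0)e_{GFS}$ I may assume $P(0)=0$ at the cost of doubling the bound, so that $R:=G-P\in\CAP$ satisfies $\|R\|_{\infty,A}<\delta$ and $R(0)=0_A$. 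Since $A$ is commutative, $\exp(R)=\exp(-P)\,F$, and $\exp(-P)\in\GFSA$ (the exponential series of a trigonometric polynomial converges in $\|\cdot\|_{\GFSA}$ by submultiplicativity), whence $\exp(R)\in\GFSA$.

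Now I would produce a GFS-logarithm of $\exp(R)$. Put $b:=e_{GFS}-\exp(R)\in\GFSA$; then $\|b\|_{\infty,A}\le e^{\|R\|_{\infty,A}}-1<e^{\delta}-1<1$, so by \eqref{eq-spec-estimate} the spectrum $\mathrm{Sp}_{\GFSA}(b)$ lies in the open unit disc. The principal branch of $w\mapsto\log(1-w)$ is holomorphic there, so the holomorphic functional calculus yields $\widetilde R:=\log(e_{GFS}-b)\in\GFSA$ with $\exp(\widetilde R)=e_{GFS}-b=\exp(R)$. It remains to identify $\widetilde R$ with $R$. Since $\exp(\widetilde R)=\exp(R)$ pointwise and $A$ is commutative, $\exp\bigl((\widetilde R-R)(z)\bigr)=e_A$ for every $z$; applying Lemma~\ref{l-uniqueness} at each $z$ (with the second element equal to $0_A$) together with \eqref{eq-tilde2} and \eqref{eq-tilde4} gives $(\widetilde R-R)(z)=2\pi\ri k_z\,e_A$ for integers $k_z$, which are constant in $z$ by continuity of $\widetilde R-R$ and connectedness of $\bR^n$. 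Evaluating at $0$ and using that the point evaluation $\mathrm{ev}_0:\GFSA\to A$, $h\mapsto h(0)$, is a unital Banach-algebra homomorphism and therefore commutes with the functional calculus, I obtain $\widetilde R(0)=\log(\exp(R(0)))=\log(e_A)=0_A=R(0)$, forcing $k_z\equiv0$. Hence $\widetilde R=R$, so $R\in\GFSA$ and therefore $G=P+R\in\GFSA$. The main obstacle throughout is exactly the norm mismatch noted above, and the whole argument hinges on replacing norm-smallness by spectral-radius-smallness via \eqref{eq-spec-estimate}, so that the functional calculus can be carried out inside $\GFSA$ even though only $\|\cdot\|_{\infty,A}$ is under control.
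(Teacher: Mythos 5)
Your proposal is correct and follows essentially the same route as the paper: show $\exp(G)\in\GFSA$ via the identity $\exp(G)=\varphi_0(f(0)^{-1}f)^{-1}\,f(0)^{-1}f$, approximate $G$ in $\|\cdot\|_{\infty,A}$ by a trigonometric polynomial, use the spectral estimate \eqref{eq-spec-estimate} to run the holomorphic functional calculus for the principal logarithm inside $\GFSA$, and identify the resulting logarithm with $G-P$ up to a constant in $2\pi\ri\bZ\,e_A$ via Lemma~\ref{l-uniqueness}. The only (harmless) deviations are that you fix a single sufficiently good approximant instead of a sequence and pin the additive constant to zero by normalising $P(0)=0_A$ and evaluating the functional calculus at $0$, whereas the paper simply absorbs the constant $2\pi\ri m_k e_A$ into the generalised Fourier series.
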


\begin{proof}
In this proof we shall write $\exp$ for the exponential series in $A$ or in $\bC$, $\exp_{CbA}$ for the exponential series in $C_b(\bR^n,A)$ (which has identity $e_{CbA}$), and $\exp_{GFS}$ for the exponential series in $\GFSA$ (which has identity $e_{GFS} = e_{CbA}$).
By \eqref{eq-diff-norms}, convergence  in $\|\cdot\|_{\GFSA}$ implies convergence in $\|\cdot\|_{\infty,A}$, hence we have $\exp_{CbA}(h) = \exp_{GFS}(h)$ for $h\in \GFSA$, and it is easily seen that $(\exp_{CbA}(h))(z) = \exp (h(z))$ for $h\in C_b(\bR^n,A)$. Even though the forms of the exponential series all lead to the same result, we decided to keep the subscripts in this proof for clarity.

Let $\varphi_0,f$ and $G$ be as in the statement of the lemma.
As in the proof of Lem\-ma~\ref{lem-ap}, we can assume without loss of generality that $f(0) = e_A$.
Since $\GFSA \subset \CAP$, we know from Lemma~\ref{lem-ap} that $G\in \CAP$. We need to show that even $G\in \GFSA$. From the implication \lq\lq $(iii) \Longrightarrow (v)$\rq\rq~in Theorem~\ref{t-characterisation2} we know that $z\mapsto \varphi_0(f(z))$ is in $(\GFSC)^{-1}$, hence
$z\mapsto \frac{1}{\varphi_0(f(z))} e_A$ is in $\GFSA$. Since
\begin{align}
\exp(G(z)) & = \exp [g(z) - \varphi_0(g(z)) e_A] = \exp[g(z)] \, \varphi_0 (\exp(-g(z))) e_A \nonumber \\
& = f(z) (\varphi_0(f(z)))^{-1} e_A \label{eq-e-varphi}
\end{align}
(where we used \eqref{eq-exponential-phi}), and since $f\in \GFSA$, we conclude that
the function  $z\mapsto \exp(G(z))$ is in $\GFSA$, hence $\exp_{CbA}(G) \in\GFSA$.
Since $G$ is almost periodic, by Remark~\ref{AP-equivalences}~(b) there exists a sequence $(p_k)_{k\in \bN}$ of trigonometric polynomials such that $\|p_k - G\|_{\infty,A} \to 0$ as $k\to\infty$. Write $G = G - p_k + p_k$.
Since $p_k$ is in the Banach algebra $\GFSA$, also $\exp_{CbA}(-p_k) = \exp_{GFS}(-p_k) \in \GFSA$. Since also $\exp_{CbA}(G) \in \GFSA$, since $\exp_{CbA} (G-p_k) - e_{CbA}
= \exp_{CbA}(G) \exp_{CbA}(-p_k) - e_{CbA}$ and since $e_{CbA} = e_{GFS} \in \GFSA$, we conclude that  $\exp_{CbA} (G-p_k) - e_{CbA} \in \GFSA$.
Since
\begin{align*}
\left\| \exp_{CbA}(G-p_k) - e_{CbA}\right\|_{\infty,A} & \leq \sum_{j=1}^\infty \frac{\|G-p_k\|^j_{\infty,A}}{j!} = \exp (\|G-p_k\|_{\infty,A}) - 1
\end{align*}
and the latter tends to 0 as $k\to\infty$, we conclude from
\eqref{eq-spec-estimate}
that the spectrum of $\exp_{CbA}(G-p_k) - e_{CbA}$ (as an element of $\GFSA$) is contained in $\{ w \in \bC: |w|\leq 1/2\}$ for large $k$. Hence the spectrum of $\exp_{CbA}(G-p_k)$ as an element of $\GFSA$ is contained in $\{w\in \bC: |w-1| \leq 1/2\}$ for large $k$. Hence we can apply the one-variable holomorphic functional calculus in the Banach algebra $\GFSA$ to the principle branch of the complex logarithm (e.g. Kaniuth~\cite[Lem.~3.2.4]{Kaniuth}), and conclude that there is some $H_k\in \GFSA$ such that $\exp_{GFS}(H_k) = \exp_{CbA}(G-p_k)$ for large~$k$.
Hence also $\exp(H_k(z)) = \exp (G(z) - p_k(z))$ for all $z\in \bR^n$. Since $H_k$, $p_k$ and $G$ are continuous, it follows as in the proof of uniqueness of the distinguished logarithm in Theorem~\ref{t-distinguished} that there are $m_k\in \bZ$ such that $H_k(z) = G(z) - p_k(z) + 2 \pi \ri m_k e_A$ for all $z\in \bR^n$, i.e. $G(z)=p_k(z) + H_k(z) - 2\pi \ri m_k \re^{\ri 0^T z} e_A$. Since $H_k$ and $p_k$ are in $\GFSA$, we conclude that also $G\in \GFSA$.
\end{proof}

We can now come to our second characterisation of invertible elements in $\GFSA$, when $A$ is additionally semisimple with connected Gelfand space.

\begin{theorem}[Second characterisation of invertible elements in $\GFSA$] \label{t-characterisation}
Let $A$ be a semisimple commutative unital  Banach algebra with connected Gelfand space~$\Delta(A)$. Let $f\in \GFSA$. Then the following are equivalent:
\begin{enumerate}
\item[(i)] $f\in (\GFSA)^{-1}$, i.e.~$f$ is invertible in $\GFSA$.
\item[(vii)] $f(0) \in A^{-1}$ and there exist some $\gamma\in \bR^n$ and $g_1\in \GFSA$ such that
$f(z) = f(0) \exp (\ri \gamma^T z \, e_A + g_1(z) )$ for all $z \in \bR^n$.
\item[(viii)] $f(0) \in A^{-1}$ and there exist some $\gamma\in \bR^n$ and $g_2\in \GFSA$ such that
$f(z) = f(0) \exp (\ri \gamma^T z \, e_A + g_2(z)-g_2(0) )$ for all $z \in \bR^n$.
\end{enumerate}
\end{theorem}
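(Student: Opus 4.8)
The plan is to establish the cycle of implications by treating (vii)$\Leftrightarrow$(viii) as a formal manipulation and (vii)$\Rightarrow$(i) as an immediate factorisation, reserving the real work for (i)$\Rightarrow$(vii). For (viii)$\Rightarrow$(vii) one simply sets $g_1:=g_2-g_2(0)e_A$, which lies in $\GFSA$ since the constant function $g_2(0)e_A$ does; conversely, for (vii)$\Rightarrow$(viii) I would evaluate the representation in (vii) at $z=0$, which (because $f(0)\in A^{-1}$) forces $\exp(g_1(0))=e_A$, so that by commutativity $f(0)\exp(\ri\gamma^T z\,e_A+g_1(z)-g_1(0))=f(z)\exp(-g_1(0))=f(z)$, and (viii) holds with $g_2=g_1$. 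For (vii)$\Rightarrow$(i), commutativity of $A$ lets me write $f(z)=f(0)\,\re^{\ri\gamma^T z}\exp(g_1(z))$, exhibiting $f$ as the product of three invertible elements of $\GFSA$: the constant $f(0)$ (invertible because $f(0)\in A^{-1}$), the character $z\mapsto\re^{\ri\gamma^T z}e_A$ (with obvious inverse $z\mapsto\re^{-\ri\gamma^T z}e_A$), and $\exp_{GFS}(g_1)$ (invertible since $\GFSA$ is a Banach algebra). Hence $f\in(\GFSA)^{-1}$.

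The substance is (i)$\Rightarrow$(vii). First I would invoke the equivalence (i)$\Leftrightarrow$(iii) of Theorem~\ref{t-characterisation2} to record that $f(z)\in A^{-1}$ for every $z$ and $\sup_{z\in\bR^n}\|f(z)^{-1}\|_A<\infty$; in particular $f(0)\in A^{-1}$, and the hypotheses of Lemmas~\ref{lem-ap} and~\ref{lem-ap2} are met. Theorem~\ref{t-distinguished} then supplies a unique continuous distinguished logarithm $g\colon\bR^n\to A$ with $g(0)=0_A$ and $f(z)=f(0)\exp(g(z))$. This $g$ need not itself belong to $\GFSA$, and overcoming this is the crux. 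Fixing any $\varphi_0\in\Delta(A)$ (nonempty since $A$ is unital) and setting $G(z):=g(z)-\varphi_0(g(z))e_A$, Lemma~\ref{lem-ap2} delivers precisely what is required: $G\in\GFSA$. It then remains only to identify the scalar part $z\mapsto\varphi_0(g(z))$.

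For the scalar part I would descend to the already-understood $\bC$-valued theory. By the implication (i)$\Rightarrow$(v) of Theorem~\ref{t-characterisation2}, the function $z\mapsto\varphi_0(f(z))$ is invertible in $\GFSC$, so the reformulation of Theorem~\ref{t-Khartov} recorded in \eqref{eq-7-a} produces $\gamma\in\bR^n$ and $\tilde g\in\GFSC$ with $\varphi_0(f(z))=\varphi_0(f(0))\exp(\ri\gamma^T z+\tilde g(z)-\tilde g(0))$. Since $\varphi_0$ is continuous and multiplicative, $\varphi_0(f(z))=\varphi_0(f(0))\exp(\varphi_0(g(z)))$ by \eqref{eq-exponential-phi}, and $\varphi_0(f(0))\neq0$ because $f(0)\in A^{-1}$, whence $\exp(\varphi_0(g(z)))=\exp(\ri\gamma^T z+\tilde g(z)-\tilde g(0))$. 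Both exponents are continuous, vanish at $0$, and share the same exponential, so uniqueness of the $\bC$-valued distinguished logarithm (Section~\ref{S-prelim}) forces $\varphi_0(g(z))=\ri\gamma^T z+\tilde g(z)-\tilde g(0)$ for all $z$. Consequently $g(z)=\ri\gamma^T z\,e_A+G(z)+(\tilde g(z)-\tilde g(0))e_A$, and putting $g_1:=G+(\tilde g-\tilde g(0))e_A$ yields an element of $\GFSA$ (the second summand being the scalar generalised Fourier series $\tilde g-\tilde g(0)$ multiplied by $e_A$) with $g_1(0)=G(0)=0_A$. Thus $f(z)=f(0)\exp(\ri\gamma^T z\,e_A+g_1(z))$, which is (vii), and since $g_1(0)=0_A$ it simultaneously gives (viii) with $g_2=g_1$.

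The main obstacle is that the distinguished logarithm $g$ furnished by Theorem~\ref{t-distinguished} is merely continuous and may fail to be a generalised Fourier series; the entire point of the argument is that it nevertheless splits into a linear term $\ri\gamma^T z\,e_A$ and a genuine $\GFSA$-part. The two technical inputs that resolve this difficulty---Lemma~\ref{lem-ap2}, which tames the $\varphi_0$-centred remainder $G$, and the $\bC$-valued invertibility theory of Theorem~\ref{t-Khartov}, which tames the scalar projection $\varphi_0(g(\cdot))$---are already in hand, so the remaining work is chiefly in gluing them together and verifying, through uniqueness of distinguished logarithms, that the pieces reassemble into the required form.
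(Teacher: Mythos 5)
Your proposal is correct and follows essentially the same route as the paper: the hard implication (i)$\Rightarrow$(vii) is handled by combining the distinguished logarithm of Theorem~\ref{t-distinguished}, the $\varphi_0$-centred remainder $G\in\GFSA$ from Lemma~\ref{lem-ap2}, and the scalar theory of Theorem~\ref{t-Khartov} via Theorem~\ref{t-characterisation2}(v), while (vii)$\Rightarrow$(i) is the explicit three-factor inverse and (vii)$\Leftrightarrow$(viii) is evaluation at $z=0$. The only cosmetic difference is that you identify the scalar exponent $\varphi_0(g(\cdot))$ through uniqueness of the $\bC$-valued distinguished logarithm, whereas the paper simply multiplies the two exponential representations together via its equation \eqref{eq-e-varphi}; both yield the same $g_1=G+(\tilde g-\tilde g(0))e_A$.
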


\begin{proof}
To see that (i) implies (vii), recall that (i) implies (iii) of Theorem~\ref{t-characterisation2}. Denote by $g$ the distinguished logarithm of $f(0)^{-1} f$, fix $\varphi_0 \in \Delta(A)$ and denote $G(z) := g(z) - \varphi_0 (g(z)) e_A$, $z\in \bR^n$. Then $G\in \GFSA$ by Lemma~\ref{lem-ap2}. From \eqref{eq-e-varphi} (recall that in the proof of Lemma~\ref{lem-ap2} we had assumed that $f(0) = e_A$) we obtain
$$f(0)^{-1} f(z) =   \varphi_0 (f(0)^{-1} f(z)) e_A  \exp(G(z)) \quad \forall\; z \in \bR^n.$$
But $z\mapsto \varphi_0 (f(0)^{-1} f(z))$ is in $(\GFSC)^{-1}$ by property (v) of Theorem~\ref{t-characterisation2}. Hence by~\eqref{eq-7-a} there exist $\gamma\in \bR^n$ and $g_0 \in \GFSC$ such that
$$\varphi_0 (f(0)^{-1} f(z)) = \exp(\ri \gamma^T z + g_0(z) - g_0(0)).$$
Defining $g_1(z) := G(z) + (g_0(z) - g_0(0)) e_A$ we have $g_1 \in \GFSA$ and obtain $f(z) = f(0) \exp (\ri \gamma^T z \, e_A + g_1(z))$, thus proving (vii).

To see that (vii) implies (i), consider the function $f': \bR^n \to A$ defined by
\begin{align*}
f'(z) & := f(0)^{-1} \exp (-\ri \gamma^T z \, e_A) \, \exp(-g_1(z)) .
\end{align*}
Recalling that $\exp (-g_1(z)) = (\exp (-g_1))(z)$ (the latter is the exponential series of $-g_1$ in $\GFSA$, evaluated at $z$, see the beginning of the proof of Lemma~\ref{lem-ap2}), and observing that $z\mapsto \exp (-\ri \gamma^T z \, e_A) = \exp(-\ri \gamma^T z) \, e_A$ is in $\GFSA$, we conclude that $f'\in \GFSA$.
Since $f'(z) f(z) = e_A$ for all $z\in \bR^n$ we have $f' f = e_{GFS}$, so that $f$ is invertible in $\GFSA$. Hence (i) and (vii) are equivalent.

For the equivalence of (vii) and (viii), assume first that (vii) holds true. Then $f(0) = f(0) \exp (g_1(0))$, so that $\exp (g_1(0)) = e_A$. Condition (viii) then follows with $g_2 := g_1$. Conversely, if (viii) is true with some $g_2$, then (vii) follows with $z\mapsto g_1(z) := g_2(z) - g_2(0)$, which is then also in $\GFSA$.
\end{proof}

\section{The case $q\in \{1,\ldots, n-1\}$} \label{S7}

We now return to our problem of characterising the invertible complex measures on $\bR^n$. As seen in Proposition~\ref{p-Taylor3}, it is enough to do this for complex measures of certain forms, which we parameterised by some number $q\in \{0,1,\ldots, n\}$. We have already treated the cases $q=0$ and $q=n$ and can now treat the intermediate case. More precisely,
with the help of Theorem~\ref{t-characterisation}
we are now in a position to characterise when an element of the form $\mu = \alpha \delta_0^n + f \lambda^q \otimes \zeta^{n-q}_C$ as appearing in Proposition~\ref{p-Taylor3}, with $q\in \{1,\ldots, n-1\}$, is invertible. We have the following result:

\begin{theorem} \label{t-allg-q}
Let $n\geq 2$, $q\in \{1,\ldots, n-1\}$ and $\mu \in M(\bR^n)$ be of the form $\mu = \alpha \delta_0^n + f \lambda^q \otimes \zeta_C^{n-q}$, where $\alpha \in \bC$, $C\subset \bR^{n-q}$ is countable and $f:\bR^n \to \bC$ is $\lambda^q \otimes \zeta_C^{n-q}$-integrable and vanishes outside $\bR^q \times C$.\\
(a) Suppose that $q=1$. Then the following are equivalent:
\begin{enumerate}
\item[(i)] $\mu$ is invertible.
\item[(ii)] $\mu \in \CLK$ with $\gamma \in \{0_q\} \times \bR^{n-q}$ in \eqref{eq-CLK} and complex L\'evy type measure $\nu$ of the form $\nu = \nu_0 + \nu_1$, where $\nu_0 \in M(\bR^n)$ (hence is finite) and $\nu_1$ is of the form
    $$\nu_1(\di (x,y)) = \left( m \frac{\re^{-|x|}}{x} \lambda^1 (\di x)\right) \otimes \delta_0^{n-1}(\di y), \quad (x,y) \in \bR \times \bR^{n-1}$$
    for some $m\in \bZ$, i.e.~is the product measure of the (infinite when $m\neq 0$) quasi-L\'evy type measure   $m \frac{\re^{-|x|}}{x} \lambda^1 (\di x)$ and $\delta_0^{n-1}$ (the product lives on $\cB_0^n$).
\item[(iii)] There exist $\gamma \in \{0_q\} \times \bR^{n-q}$, $m\in \bZ$ and $\nu' \in M(\bR^n)$ such that $\mu = \delta_\gamma^n \ast (\sigma^{\ast m} \otimes \delta_0^{n-1}) \ast \exp(\nu')$, where $\sigma\in M(\bR)$ is defined in Theorem~\ref{t-Taylor1}.
\end{enumerate}
(b) Suppose that $q\geq 2$. Then the following are equivalent:
\begin{enumerate}
\item[(i)] $\mu$ is invertible.
\item[(ii)] $\mu \in \CLK$ with  $\gamma \in \{0_q\} \times \bR^{n-q}$ in \eqref{eq-CLK} and finite complex L\'evy type measure~$\nu$.
\item[(iii)] There exist $\gamma \in \{0_q\} \times \bR^{n-q}$ and $\nu' \in M(\bR^n)$ such that $\mu = \delta_\gamma^n \ast \exp(\nu')$.
\end{enumerate}
\end{theorem}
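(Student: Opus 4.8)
The plan is to recognise $\widehat{\mu}$ as an element of $\GFSQ$ and to transfer invertibility between $M(\bR^n)$ and this Banach algebra, so that Theorem~\ref{t-characterisation} becomes applicable. Write $z=(z_1,z_2)\in\bR^q\times\bR^{n-q}$ and set $A:=\bC\delta_0^q+L(\bR^q)$, which by Example~\ref{ex-Banach-algebra-2}(c) is a semisimple commutative unital Banach algebra whose Gelfand space is the (connected) one-point compactification of $\bR^q$: its characters are the evaluations $\varphi_{z_1}\colon a\mapsto\widehat{a}(z_1)$ for $z_1\in\bR^q$, together with the character at infinity sending $a=\beta\delta_0^q+h\lambda^q$ to $\beta$. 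Splitting $f$ into its sections $f^y:=f(\cdot,y)$ and collecting the coefficients $a_0:=\alpha\delta_0^q+f^0\lambda^q$ and $a_y:=f^y\lambda^q$ for $y\in C\setminus\{0\}$, one obtains, as announced in Section~\ref{S-motivation}, that $F:=\sum_{y\in C\cup\{0\}}\re^{\ri y^T\cdot}a_y$ lies in $\GFSQ$, the absolute summability $\sum_y\|a_y\|_A<\infty$ being exactly the integrability $\sum_{c\in C}\int_{\bR^q}|f(\cdot,c)|\,\di\lambda^q<\infty$ established in Section~\ref{S3}. Moreover the map $\Theta\colon\sum_y\re^{\ri y^T\cdot}b_y\mapsto\sum_y b_y\otimes\delta_y^{n-q}$ is an isometric algebra isomorphism of $\GFSQ$ onto a closed subalgebra of $M(\bR^n)$ with $\Theta(F)=\mu$ and $\Theta(e_{GFS})=\delta_0^n$.

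I would first show that $\mu$ is invertible in $M(\bR^n)$ if and only if $F$ is invertible in $\GFSQ$. One implication is immediate: if $F$ has an inverse $F'\in\GFSQ$, then $\Theta(F')$ inverts $\mu$. For the converse I would invoke Proposition~\ref{p-Taylor4}, which guarantees that the inverse of $\mu$ is again of the restricted form $\widetilde\alpha\delta_0^n+\widetilde f\,\lambda^q\otimes\zeta_{\widetilde C}^{n-q}$, hence equals $\Theta(\widetilde F)$ for some $\widetilde F\in\GFSQ$; injectivity of $\Theta$ then forces $F\widetilde F=e_{GFS}$. (Alternatively, invertibility of $\mu$ forces $\inf_z|\widehat{\mu}(z)|>0$ and $\alpha\neq 0$, which, read through the two families of characters of $A$, is precisely condition (vi) of Theorem~\ref{t-characterisation2}.)

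With invertibility transferred, the heart of the argument is to apply Theorem~\ref{t-characterisation}(vii): there are $F(0)\in A^{-1}$, $\gamma_2\in\bR^{n-q}$ and $g_1\in\GFSQ$ with $F(z_2)=F(0)\exp(\ri\gamma_2^Tz_2\,e_A+g_1(z_2))$. Applying the continuous multiplicative character $\varphi_{z_1}$ and using $\varphi_{z_1}(\exp(\cdot))=\exp(\varphi_{z_1}(\cdot))$ gives $\widehat{\mu}(z)=\widehat{F(0)}(z_1)\,\exp(\ri\gamma_2^Tz_2+\varphi_{z_1}(g_1(z_2)))$, where $\varphi_{z_1}(g_1(z_2))=\widehat{\Gamma}(z)$ for the finite measure $\Gamma:=\Theta(g_1)\in M(\bR^n)$. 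The condition $F(0)\in A^{-1}$ says, via Proposition~\ref{p1-q=n} in dimension $q$, that $F(0)$ is an invertible element of $M(\bR^q)$ of the form $\alpha\delta_0^q+g\lambda^q$ with $\alpha\neq 0$, and this is exactly where the case split enters: for $q\geq 2$, Theorem~\ref{t-delta-lebesgue1}(vi$'$) yields $F(0)=\exp(\rho)$ with $\rho\in M(\bR^q)$, whereas for $q=1$, Proposition~\ref{p2-q=n}(vi) yields $F(0)=\sigma^{\ast m}\ast\exp(\rho)$ with $m\in\bZ$. In neither case is there a shift factor, which is the reason the eventual $\gamma$ has vanishing first $q$ coordinates.

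It then remains to assemble the pieces. Writing $\widetilde\rho:=\rho\otimes\delta_0^{n-q}$ and $\gamma:=(0_q,\gamma_2)$, and using $\widehat{\tau}(z)=\tau(\bR^n)+\int_{\bR^n}(\re^{\ri z^Tx}-1)\,\tau(\di x)$ for $\tau\in M(\bR^n)$, the case $q\geq 2$ directly gives (ii) with the finite complex L\'evy type measure $\nu=(\widetilde\rho+\Gamma)|_{\cB_0^n}$; for $q=1$ the extra factor $\widehat{\sigma}(z_1)^m=\exp(\int_{\bR^n}(\re^{\ri z^Tx}-1)\,\nu_1(\di x))$, with $\nu_1$ built from the L\'evy density $m\,\re^{-|x|}/x$ of $\sigma^{\ast m}$ identified in Proposition~\ref{p2-q=n}(v), contributes precisely the prescribed infinite part of (ii)(a). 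The equivalence (ii)$\Leftrightarrow$(iii) follows from the discussion after \eqref{eq-exponential3} (finiteness of $\nu_0$ lets it be absorbed into $\exp(\nu')$) together with $\widehat{\sigma^{\ast m}\otimes\delta_0^{n-1}}(z)=\widehat{\sigma}(z_1)^m$, and (iii)$\Rightarrow$(i) is immediate since $\delta_\gamma^n$, $\sigma^{\ast m}\otimes\delta_0^{n-1}$ and $\exp(\nu')$ are all invertible. I expect the main obstacle to be the careful transfer of invertibility into $\GFSQ$ and the bookkeeping when pushing $\varphi_{z_1}$ through the distinguished exponential; the genuine content is carried entirely by Theorem~\ref{t-characterisation} and by the dimension-$q$ results applied to $F(0)$.
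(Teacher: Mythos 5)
Your proposal is correct and follows essentially the same route as the paper's own proof: it embeds $\widehat{\mu}$ as an element of $\GFSQ$ with $A$ (isometrically isomorphic to) $\bC\delta_0^q+L(\bR^q)$, transfers invertibility via Proposition~\ref{p-Taylor4}, applies Theorem~\ref{t-characterisation}(vii), and then handles the constant term $F(0)=\widehat{\mu}(\cdot,0_{n-q})$ with the dimension-$q$ results (Proposition~\ref{p2-q=n} for $q=1$, Theorem~\ref{t-delta-lebesgue1} for $q\geq 2$), exactly as the paper does. The remaining steps ((ii)$\Leftrightarrow$(iii) via the discussion after \eqref{eq-exponential3} and the computation of $\widehat{\sigma}^m$, and (iii)$\Rightarrow$(i) by invertibility of each factor) also coincide with the paper's argument.
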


\begin{proof}
We write $(x,y) \in \bR^q \times \bR^{n-q}$ and $z=(z_1,z_2) \in \bR^q \times \bR^{n-q}$.
We start by proving that (i) implies (ii) in both settings, which is the hardest part.
So let $\mu = \alpha \delta_0^n + f \lambda^q \otimes \zeta_C^{n-q}$ be invertible.
The characteristic function of $\mu$ is then given by
\begin{eqnarray}
\widehat{\mu}(z_1,z_2) & = & \alpha + \int_{\bR^n} \re^{\ri z_1^T  x} \, \re^{\ri z_2^T y} \, f(x,y) \, (\lambda^q \otimes  \zeta^{n-q}_C)(\di (x,y)) \nonumber \\
& = & \alpha\, \re^{\ri z_2^T 0_{n-q}} + \sum_{y\in C} \re^{\ri z_2^T y} \int_{\bR^q}
\re^{\ri z_1^T x} f(x,y) \, \lambda^q (\di x), \quad (z_1,z_2) \in \bR^q \times \bR^{n-q},
\label{eq-mu-zwei}
\end{eqnarray}
where the sum converges absolutely.
Denote by $A$ the set of all characteristic functions of elements in $\bC \delta_0^q + L(\bR^q)\subset M(\bR^q)$ (cf. Example~\ref{ex-Banach-algebra-1}~(c)), i.e. $$A := \left\{ \left( \bR^q \to \bC, z_1 \mapsto \int_{\bR^q} \re^{\ri z_1^T x} \, \rho(\di x) \right) : \rho \in \bC \delta_0^q + L(\bR^q) \right\}.$$
Then  $A$ is a unital commutative Banach algebra with respect to multiplication and the constant function 1 as identity $e_A$, which is isometrically isomorphic to $\bC \delta_0^q + L(\bR^q)$, with the norm $\|\cdot\|_A$ induced from the total variation norm on $\bC \delta_0^q + L(\bR^q)$. Since
$\bC \delta_0^q + L(\bR^q)$
is semisimple with connected Gelfand space by Example~\ref{ex-Banach-algebra-2}~(c), the same is true for $A$.
Equation \eqref{eq-mu-zwei} now implies  that there is a sequence $(a_y)_{y\in C \cup \{0\}}$ in $A$ with $\sum_{y\in C \cup \{0\}} \|a_y\|_A < \infty$ such that
\begin{equation} \label{eq-mu-drei}
\widehat{\mu}(\cdot, z_2) = \sum_{y \in C \cup \{0\}} \re^{\ri z_2^T y} a_y.
\end{equation}
In other words, $z_2 \mapsto \widehat{\mu}(\cdot, z_2)$ is in $\GFSQ$.
We assumed that $\mu$ is invertible. Denote its inverse by $\mu'$. Then $\mu'$ is of the same form as $\mu$ by Proposition~\ref{p-Taylor4}. Hence, by the same argument, the function $z_2 \mapsto \widehat{\mu'}(\cdot, z_2)$ is in $\GFSQ$, too. Since $\widehat{\mu'}(z_1,z_2) \widehat{\mu}(z_1,z_2)  =1$ for all $(z_1,z_2) \in \bR^q \times \bR^{n-q}$, we conclude that $z_2 \mapsto \widehat{\mu}(\cdot,z_2)$ is invertible in $\GFSQ$.
By the equivalence of (i) and (vii) in Theorem~\ref{t-characterisation}, we conclude that there are some countable set $D\subset \bR^{n-q}$, an absolutely summable sequence $(b_y)_{y\in D}$ in $A$ and some $\gamma_2 \in \bR^{n-q}$ such that
$$\widehat{\mu}(\cdot, z_2) = \widehat{\mu}(\cdot, 0_{n-q}) \,
\exp \left\{ \ri \gamma_2^T z_2 \, e_A+  \sum_{y\in D} \re^{\ri z_2^T y}  b_y \right\} \quad \forall\; z_2 \in \bR^{n-q}.$$
Since $b_y\in A$, there are $\beta_y \in \bC$ and $g_y \in L^1(\bR^q)$ such that $b_y = (\beta_y \delta_0^q + g_y \lambda^q)^\wedge$. The total variation of $\beta_y \delta_0^q + g_y \lambda^q$ is $|\beta_y| + \int_{\bR^q} |g_y(x)| \, \lambda^q (\di x)$, and since $(b_y)$ is absolutely summable, we conclude that $\sum_{y\in D} |\beta_y| < \infty$ and $\sum_{y\in D} |g_y| \in L^1(\bR^q)$.  Defining the (finite) complex measure
$$Q :=  \sum_{y\in D} (\beta_y \delta_0^q  + g_y \lambda^q) \otimes \delta_y^{n-q}$$
on $(\bR^n,\cB(\bR^n))$, we conclude with $\gamma := (0_q,\gamma_2)^T \in \bR^n$ that
\begin{align*}
\widehat{\mu}(z_1,z_2) &
= \widehat{\mu}(z_1,0_{n-q}) \exp \left\{ \ri \gamma^T \left( \begin{array}{c} z_1 \\ z_2 \end{array} \right) + \int_{\bR^n} \re^{\ri (z_1^T x + z_2^T y)} Q(\di (x,y)) \right\} \\
& =  \widehat{\mu}(z_1,0_{n-q})\, \re^{Q(\bR^n)} \exp \left\{ \ri \gamma^T \left( \begin{array}{c} z_1 \\ z_2 \end{array} \right) + \int_{\bR^n} \left( \re^{\ri (z_1^T x + z_2^T y)}-1\right) Q(\di (x,y)) \right\}
\end{align*}
for all $(z_1,z_2) \in \bR^q \times \bR^{n-q}$. By \eqref{eq-mu-zwei},  $\widehat{\mu}(\cdot, 0_{n-q})$ is the characteristic function of $\alpha \delta_0^q + (\sum_{y\in C} f(\cdot, y)) \lambda^q \in M(\bR^q)$, and this complex measure is invertible (since $\mu$ is invertible with inverse $\mu'$, we have $\widehat{\mu}(z_1,z_2) \widehat{\mu'}(z_1,z_2) = 1$, hence $\widehat{\mu}(\cdot ,0_{n-q}) \widehat{\mu'}(\cdot,0_{n-q}) = e_A$, so that the inverse has characteristic function $\widehat{\mu'}(\cdot,0_{n-q})$). From Propositions~\ref{p1-q=n}~(ii),~\ref{p2-q=n}~(v) and Theorem~\ref{t-delta-lebesgue1}~(v') we conclude
 that there are some $c\in \bC \setminus \{0\}$, $g\in L^1(\bR^q)$ and (when $q=1$) some $m\in \bZ$ such that
$$\widehat{\mu}(z_1,0_{n-q}) = c\, \exp \left\{ (\re^{\ri z_1^T x} - 1) \left(g(x) \lambda^q(\di x) + \one_{\{q=1\} }\,  m \frac{\re^{-|x|}}{x} \lambda^1(\di x) \right) \right\}$$
for all $z_1\in \bR^q$. Denoting
$$Q' := \left(g(x) \lambda^q(\di x) + \one_{\{q=1\} }\,  m \frac{\re^{-|x|}}{x} \lambda^1(\di x) \right) \otimes \delta_0^{n-q},$$
we conclude that
$$\widehat{\mu}(z_1,z_2) = c\, \re^{Q(\bR^n)}  \exp \left\{
\ri \gamma^T \left( \begin{array}{c} z_1 \\ z_2 \end{array} \right)
 + \int_{\bR^n} ( \re^{\ri z_1^T x + \ri z_2^T y} -1) (Q+Q')(\di (x,y)) \right\} $$
for all $(z_1,z_2) \in \bR^q \times \bR^{n-q}$,
showing that $\mu \in \CLK$ with $\gamma = (0_q,\gamma_2)^T$ and complex L\'evy type measure $\nu:= Q+Q'$, which has the required form. Thus we have proved that (i) implies (ii) in both cases (a) and (b).

To see that (ii) implies (iii), in case (b) define additionally $\nu_0 := \nu$ and $\nu_1 := 0_{M(\bR^n)}$, and let $\widehat{\mu}(z) = c \exp \left\{ \ri \gamma^T z + \int_{\bR^n} \left( \re^{\ri z_1^T x  + \ri z_2^T y} - 1 \right) (\nu_0 + \nu_1)(\di (x,y))\right\}$ with $c\neq 0$ in both cases (again we write $z=(z_1,z_2)$).  From the discussion following Equation~\eqref{eq-exponential3}, we see that $$\bR^n \to \bC, \quad z\mapsto c \, \exp \left\{ \ri \gamma^T z + \int_{\bR^n} \left( \re^{\ri z_1^T x + \ri z_2^T y} - 1 \right) \nu_0 (\di (x,y))\right\}$$ is the characteristic function of $\delta_\gamma^n \ast \exp (\nu')$ for some suitable $\nu'\in M(\bR^n)$,
thus already proving (iii) in case (b). In case (a), when $q=1$, a simple calculation (which for the second equality is also given in the proof of Theorem~4.4 in \cite{B19}) gives
\begin{align}
& \exp \left( \int_{\bR} \int_{\bR^{n-1}} \left( \re^{\ri (x^T z_1 + y^T z_2)} -1\right) m \frac{\re^{-|x|}}{x} \, \delta_0^{n-1}(\di y)\, \lambda^1(\di x) \right) \label{eq-sigma-cf} \\
= &  \exp \left( \int_\bR \left( \re^{\ri x z_1} -1\right) m \frac{\re^{-|x|}}{x} \, \lambda^1(\di x) \right) \nonumber\\
= &  \left( \frac{z_1-\ri}{z_1+\ri} \right)^m (-1)^m = \left( \frac{1+\ri z_1}{1-\ri z_1}\right)^m = (\widehat{\sigma}(z_1))^m = (\sigma^{\ast m} \otimes \delta_0^{n-1})^\wedge  (z_1,z_2) .\nonumber
\end{align}
Hence (ii) implies (iii) also in case (a).

Finally, to show that (iii) implies (i), it is enough to notice that $\delta_\gamma^n$, $\exp (\nu')$ and $\sigma^{\ast m} \otimes \delta_0^{n-1}$ are all invertible (with inverses $\delta_{-\gamma}^n$, $\exp(-\nu')$ and $\sigma^{\ast (-m)}\otimes \delta_0^{n-1}$, respectively), hence so is their convolution.
\end{proof}

\begin{remark} \label{no-Cramer-Wold}
While in the cases $q=n$ and $q=0$ we obtained Cram\'er--Wold devices for invertibility and for membership in $\mathrm{CLK}_0$ in Corollary~\ref{c-Cramer-Wold} and Theorem~\ref{t-Khartov-Cramer-Wold}, we do not know whether similar results hold when $q\in \{1,\ldots, n-1\}$. The proofs for $q=n$ and $q=0$ where based on the equivalent property (ii) in Proposition~\ref{p1-q=n} and Theorem~\ref{t-Khartov}, respectively, which is not available when $q\in \{1,\ldots, n-1\}$. Similarly, the question is unsettled for general complex measures.
\end{remark}

\section{Proof of the main results} \label{S8}

We can now give the proofs of our main results stated in Section~\ref{S2}.

\begin{proof}[Proof of Theorem~\ref{t-main1}]
Suppose that $\mu \in M(\bR^n)$ has representation~\eqref{eq-repr1} with $\gamma$, $p$, $U_i$, $m_i$ and $\nu$ as given there. Observe that $\sigma\in M(\bR)$ is invertible with inverse $\sigma^{-1}$. Define
$$\mu' := \delta_{-\gamma}^n \ast U_1  (\sigma^{\ast (-m_1)} \otimes \delta_0^{n-1}) \ast \ldots \ast
    U_p  (\sigma^{\ast (-m_p)} \otimes \delta_0^{n-1}) \ast \exp(-\nu).$$
Then $\mu' \in M(\bR^n)$ and it is easily checked that
$$U_i (\sigma^{\ast (-m_i)} \otimes \delta_0^{n-1}) \ast  U_i (\sigma^{\ast (m_i)} \otimes \delta_0^{n-1}) = U_i (\delta_0^n) = \delta_0^n,$$
so that $\mu$ is invertible with inverse $\mu'$.

Conversely, suppose that $\mu\in M(\bR^n)$ is invertible with inverse $\mu'$. By Proposition~\ref{p-Taylor3}, there exist $p$, $\nu$, $U_1,\ldots, U_p$, $q_1,\ldots,q_p$, $C_1,\ldots, C_p$, $\alpha_1,\ldots, \alpha_p$, $f_1,\ldots, f_p$, $\mu_1,\ldots, \mu_p$ as specified there such that
$$\mu = \mu_1\ast \ldots \ast \mu_p \ast \exp(\nu).$$
The $\mu_i$ are invertible and of the form $\mu_i = U_i (\alpha_i \delta_0^n + f_i \, \lambda^{q_i} \otimes \zeta^{n-q_i}_{C_i})$. Since $U_i$ defines a bijective linear mapping, each of the
$\alpha_i \delta_0^n + f_i \, \lambda^{q_i} \otimes \zeta^{n-q_i}_{C_i} = U_i^{-1} (\mu_i)$ is invertible as well. Now if $q_i=0$ or $q_i \geq 2$, then by Theorems~\ref{t-Khartov}~(vi) and~\ref{t-allg-q}~(b)~(iii), there exist $\gamma_i \in \bR^n$ and $\nu_i' \in M(\bR^n)$ such that  $U_i^{-1}(\mu_i ) = \delta_{\gamma_i}^n \ast \exp(\nu_i')$, and it is easily checked that then
$$\mu_i = U_i (\delta_{\gamma_i}^n) \ast U_i(\exp (\nu_i')) = \delta_{U_i(\gamma_i)}^n \ast \exp (U_i (\nu_i')).$$
If $q_i = 1$, then by Proposition~\ref{p2-q=n}~(vi) and Theorem~\ref{t-allg-q}~(a)~(iii) there exist $\gamma_i \in \bR^n$, $m_i \in \bZ$ and $\nu_i' \in M(\bR^n)$ such that $U_i^{-1}(\mu_i ) = \delta_{\gamma_i}^n \ast (\sigma^{\ast m_i} \otimes \delta_0^{n-1}) \ast \exp(\nu_i')$, so that
$$\mu_i = U_i (\delta_{\gamma_i}^n) \ast U_i (\sigma^{\ast m_i} \otimes \delta_0^{n-1}) \ast U_i(\exp (\nu_i')) = \delta_{U_i(\gamma_i)}^n \ast  U_i (\sigma^{\ast m_i} \otimes \delta_0^{n-1}) \ast \exp (U_i (\nu_i')).$$ This shows that $\mu$ has representation~\eqref{eq-repr1} with $\gamma := U_1(\gamma_1) + \ldots + U_p(\gamma_p)$ and $\nu$ replaced by $\nu + U_1(\nu_1') + \ldots + U_p(\nu_p')$, thus finishing the proof.
\end{proof}

\begin{proof}[Proof of Corollary~\ref{c-main2}]
The \lq if\rq~part is clear from Theorem~\ref{t-main1}, and it is easily checked that the inverse of a finite signed measure must be signed as well. For the \lq only if\rq~part, let $\mu\in M(\bR^n)$ be signed and invertible. Replacing $\mu$ by $(\mu(\bR^n))^{-1} \mu$, we can assume that $\mu(\bR^n) = 1$.  By Theorem~\ref{t-main1}, $\mu$ has representation \eqref{eq-repr1} with the specified quantities, in particular with $\nu\in M(\bR^n)$ (possibly complex-valued, and it has to be shown that $\nu$ can be chosen to be real-valued). Then
$$\exp(\nu) = \delta_{-\gamma}^n \ast U_1  (\sigma^{\ast (-m_1)} \otimes \delta_0^{n-1}) \ast \ldots \ast
    U_p  (\sigma^{\ast (-m_p)} \otimes \delta_0^{n-1}) \ast \mu$$
is a finite signed measure with $(\exp(\nu)) (\bR^n) = \mu(\bR^n) = 1$ (observe that $\sigma(\bR) = 1$). The characteristic function of $\exp(\nu)$ is given in \eqref{eq-exponential2}, so that $\exp(\nu)\in \CLK$ with complex L\'evy type measure given by $\nu$ restricted to $\cB^n_0$. By Remark~\ref{r-qid-uniqueness}~(c), $\nu$ restricted to $\cB^n_0$ is real-valued. Hence $\nu' := \nu - \nu(\{0\}) \delta_0^n$ is real-valued. Since
$$1 = (\exp(\nu))(\bR^n) = \re^{\nu(\bR^n)} = \re^{\nu'(\bR^n)} \re^{\nu(\{0\})}$$
and $\nu'(\bR^n) \in \bR$ we conclude that the imaginary part of $\nu(\{0\})$ is an integer multiple of $2\pi$, say $2\pi k$. Since $\exp (\nu - 2\pi \ri k \delta_0^n) = \exp(\nu)$ we can replace $\nu$ in the representation \eqref{eq-repr1} by
$\nu - 2\pi \ri k \delta_0^n$, which is then a finite signed measure.
\end{proof}

\begin{proof}[Proof of Theorem~\ref{t-main3}]
(a,d) For each $u\in S^{n-1}$ denote by $\rho_u$ the finite signed measure on $S^{n-1}$ given by $\rho_u := \delta_u - \delta_{-u}$ and by $\tau_u$ the quasi-L\'evy type measure on $\bR^n$ given by
$$\tau_u(B) := \int_{S^{n-1}} \int_0^\infty \one_B(r\xi) r^{-1} \re^{-r} \, \di r \, \rho_u(\di \xi), \quad B \in \cB_0^n.$$
Denoting by $e_1=(1,0,\ldots,0)^T \in \bR^n$ the first unit vector in $\bR^n$, we easily see that
$\tau_{e_1}$ is the product measure of $\re^{-|x|}/x \, \lambda^1(\di x)$ on $\bR$ with $\delta_0^{n-1}(\di y)$ on $\bR^{n-1}$. Equation \eqref{eq-sigma-cf} then shows that the characteristic function of $\sigma^{\ast m} \otimes \delta_0^{n-1}$ at $z\in \bR^n$ is given by $\exp(\int_{\bR^n} (\re^{\ri w^T z} -1) m\tau_{e_1} (\di w))$. Now if $U$ is an orthogonal matrix, then
\begin{align}
\left[ U (\sigma^{\ast m} \otimes \delta_0^{n-1}) \right]^{\wedge} (z) & =
\left[ \sigma^{\ast m} \otimes \delta_0^{n-1} \right]^{\wedge} (U^T z) \label{eq-cf-U} \\
& = \exp \left( \int_{\bR^n} ( \re^{\ri w^T U^T z} - 1) m \tau_{e_1} (\di w) \right) \nonumber \\
& =  \exp \int_{\bR^n} (\re^{\ri v^T z} -1) m \tau_{U(e_1)}(\di v) \quad \forall\; z \in \bR^n, \nonumber
\end{align}
the latter since the image measure $U(\tau_{e_1})$ of $\tau_{e_1}$ under the mapping $U$ is easily seen to be equal to $\tau_{U(e_1)}$.

Now let $\mu\in M(\bR^n)$ be invertible. By Theorem~\ref{t-main1} there are $\nu \in M(\bR^n)$, $\gamma\in \bR^n$, $p\in \bN_0$, $m_1,\ldots, m_p \in \bZ$ and $U_1,\ldots, U_n \in O(n)$ such that $\mu$ has representation~\eqref{eq-repr1}. By \eqref{eq-cf-U} and \eqref{eq-exponential2}, the characteristic function of $\mu$ is then given by
$$\widehat{\mu}(z) = \re^{\nu(\bR^n)} \exp \left\{ \ri \gamma^T z + \int_{\bR^n} \left( \re^{\ri z^T x} -1 \right) (\nu + m_1 \tau_{U_1(e_1)} + \ldots + m_p \tau_{U_p(e_1)}) (\di x)\right\},$$
so that $\widehat{\mu}$ has representation \eqref{eq-main2} with $\nu_0 := \nu$ and $\nu_1 := \sum_{j=1}^p m_j \tau_{U_j(e_1)}$. Setting $u_j := U_j(e_1)$, we see that $\sum_{j=1}^p m_j \rho_{u_j}$ agrees with $\Lambda$ as defined in~\eqref{eq-main4}, so that $\nu_1$ has indeed representation \eqref{eq-main3}. This proves the \lq only if\rq-parts in (a) and (b).

For the \lq if\rq-parts, let $f:\bR^n\to \bC$ be defined by the right-hand side of \eqref{eq-main2}, with $c,p, \gamma,\nu_0,\nu_1,\Lambda$, $m_1,\ldots, m_p$ and $u_1,\ldots, u_p$ as specified in Theorem~\ref{t-main3}. Choose orthogonal matrices $U_1,\ldots, U_p$ such that $U_j(e_1) = u_j$ and define $\widetilde{\mu} \in M(\bR^n)$ by
$\widetilde{\mu} = \delta_{\gamma}^n \ast U_1  (\sigma^{\ast m_1} \otimes \delta_0^{n-1}) \ast \ldots \ast
    U_p  (\sigma^{\ast m_p} \otimes \delta_0^{n-1}) \ast \exp(\nu_0)$. Then $\widetilde{\mu}$ is invertible by Theorem~\ref{t-main1}. As seen above, its characteristic function is given by
$$\widehat{\widetilde{\mu}}(z) = \re^{\nu_0(\bR^n)} \exp \left\{ \ri \gamma^T z + \int_{\bR^n} \left( \re^{\ri z^T x } - 1\right) (\nu_0 + \nu_1)(\di x) \right\} = \frac{\re^{\nu_0(\bR^n)}}{c} f(z).$$
This shows that $f$ is the characteristic function of the invertible complex measure $c\,\re^{-\nu_0(\bR^n)} \widetilde{\mu}$, finishing the proofs of (a) and (d).

(b) That $c$ in \eqref{eq-main2} is equal to $\mu(\bR^n)$ was already observed in Remark~\ref{r-qid-uniqueness}~(b). Since $x \mapsto \re^{\ri z^T x} - 1$ takes the value 0 for $x=0$ it follows that in \eqref{eq-main2} we can replace $\nu_0$ by $\nu_0 - \nu_0(\{0\}) \delta_0^n$.

(c) If $\mu$ is an invertible signed measure, then by (b) we can choose $\nu_0$ to satisfy $\nu_0(\{0\}) = 0$, and then it follows from Remark~\ref{r-qid-uniqueness}~(c) that the complex L\'evy type measure $\nu_0+\nu_1$  (restricted to $\cB_0^n)$ is real-valued. Since $\nu_1$ is real-valued, so is $\nu_0$. That $c=\mu(\bR^n) \in \bR$ is clear.
\end{proof}

\section{Divisibility properties of invertible complex mesures} \label{S9}

Recall that a probability measure $\mu$ on $\bR^n$ is infinitely divisible
if  for every $k\in \bN$ there exists a \emph{probability measure} $\mu_{1/k}$ on $\bR^n$ such that $\mu_{1/k}^{\ast k} = \mu$, i.e.~if $\mu$ has convolution roots of all orders, where the convolution roots are probability measures. Infinitely divisible probability measures are well known to be characterised by the L\'evy--Khinitchine formula, see e.g.~\cite[Thm.~8.1]{Sato2013} or \cite[Thm.~9.13]{BrockLind2024}. It is now natural to look for probability measures $\mu$, or more generally for complex measures $\mu\in M(\bR^n)$, for which for all $k\in \bN$ there exists a \emph{complex measure $\mu_{1/k} \in M(\bR^n)$} such that $\mu_{1/k}^{\ast k} = \mu$. This can be seen as an infinite divisibility property \emph{within the class $M(\bR^n)$.} Inspired by Sz\'ekely \cite{Szekely}, considerations like this have been carried out in Kerns~\cite{Kerns}, who restricts to finite signed rather than complex measures  on $\bR$ and calls the corresponding signed measures \emph{generalized infinitely divisible}, see \cite[Def. 2.1.1]{Kerns}. He  obtains several examples of such distributions. Similar considerations for discrete signed measures on $\bN_0$ have been carried out in Zhang et al.~\cite{ZhangLiKerns}, who give a characterisation of such signed measures, see~\cite[Thm.~2.1]{ZhangLiKerns}.
We shall now have a look at complex measures on $\bR^n$ which have complex convolution roots of all orders and are additionally invertible.

\begin{theorem}[Invertible complex measures having convolution roots of all orders] \label{t-divisibility}
Let $\mu\in M(\bR^n)$. Then the following are equivalent:
\begin{enumerate}
\item[(i)] $\mu$ is invertible and has convolution roots in $M(\bR^n)$ of all orders, i.e.~for every $k\in \bN$ there exists some $\mu_{1/k} \in M(\bR^n)$ such that $\mu_{1/k}^{\ast k} = \mu$.
\item[(ii)] $\widehat{\mu}$ is zero-free and $z\mapsto (\mu(\bR^n))^t \exp(t\psi(z))$ defines for every $t\in \bR$ the characteristic function of some complex measure $\mu_t$, where $\psi$ denotes the distinguished logarithm of $(\mu(\bR^n))^{-1} \widehat{\mu}$ and $(\mu(\bR^n))^t$ is some complex $t^{\rm th}$-power of $\mu(\bR^n)$.
\item[(iii)] There exist $\gamma\in \bR^n$ and $\nu\in M(\bR^n)$ such that $\mu = \delta_\gamma^n \ast \exp(\nu)$.
\item[(iv)] $\mu \in \CLK$ with finite complex L\'evy type measure.
\end{enumerate}
\end{theorem}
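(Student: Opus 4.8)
The plan is to prove the equivalences by running the cycle (iv) $\Leftrightarrow$ (iii) $\Rightarrow$ (ii) $\Rightarrow$ (i) $\Rightarrow$ (iv), with the last implication carrying essentially all of the weight. The equivalence of (iii) and (iv) is not new: it was already recorded in the discussion following \eqref{eq-exponential3}, where it is shown that the complex measures in $\CLK$ with a \emph{finite} complex L\'evy type measure are exactly those of the form $\delta_\gamma^n \ast \exp(\nu)$ with $\gamma \in \bR^n$ and $\nu \in M(\bR^n)$. So I would simply cite that.

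For (iii) $\Rightarrow$ (ii), I would start from $\mu = \delta_\gamma^n \ast \exp(\nu)$. Using the identity $\widehat{\exp(\nu)} = \exp(\widehat{\nu})$ from Section~\ref{S-prelim}, its characteristic function is $\widehat{\mu}(z) = \exp(\ri \gamma^T z + \widehat{\nu}(z))$, which is zero-free with $\mu(\bR^n) = \re^{\nu(\bR^n)}$. The function $z \mapsto \ri\gamma^T z + \widehat{\nu}(z) - \nu(\bR^n)$ is continuous, vanishes at $0$, and exponentiates to $(\mu(\bR^n))^{-1}\widehat{\mu}$, so by uniqueness it \emph{is} the distinguished logarithm $\psi$. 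Choosing the $t$-th power $(\mu(\bR^n))^t := \re^{t\nu(\bR^n)}$, a direct computation gives $(\mu(\bR^n))^t \exp(t\psi(z)) = \exp(\ri (t\gamma)^T z + \widehat{t\nu}(z))$, the characteristic function of $\mu_t := \delta_{t\gamma}^n \ast \exp(t\nu) \in M(\bR^n)$, which is (ii). For (ii) $\Rightarrow$ (i), specializing to $t = 1/k$ yields a measure $\mu_{1/k}$ whose $k$-fold convolution has characteristic function $((\mu(\bR^n))^{1/k})^k \exp(\psi) = \mu(\bR^n)\exp(\psi) = \widehat{\mu}$, hence $\mu_{1/k}^{\ast k} = \mu$; and $t=-1$ produces the inverse, so $\mu$ is invertible.

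The heart of the matter is (i) $\Rightarrow$ (iv). Given $\mu$ invertible with roots $\mu_{1/k}$ of all orders, each root is itself invertible because $\mu_{1/k} \ast (\mu_{1/k}^{\ast(k-1)} \ast \mu^{-1}) = \delta_0^n$. Applying Theorem~\ref{t-main3} to both $\mu$ and $\mu_{1/k}$ gives representations of the form \eqref{eq-main2} with structured quasi-L\'evy parts $\nu_1$ and $\nu_1^{(k)}$ governed by finitely supported integer-weighted sphere measures $\Lambda = \sum_j (m_j \delta_{u_j}^{S^{n-1}} - m_j \delta_{-u_j}^{S^{n-1}})$ and $\Lambda^{(k)}$ as in \eqref{eq-main4}. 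Raising $\widehat{\mu_{1/k}}$ to the $k$-th power exhibits a second representation of $\widehat{\mu}$ whose complex L\'evy type measure is $k(\nu_0^{(k)} + \nu_1^{(k)})$. Since $\mu \in \CLK$ by Corollary~\ref{c-QID}, the uniqueness of the L\'evy--Khintchine representation (Remark~\ref{r-qid-uniqueness}~(b)) forces $\nu_0 + \nu_1 = k(\nu_0^{(k)} + \nu_1^{(k)})$ as complex L\'evy type measures on $\cB^n_0$. Matching the structured parts yields $\Lambda = k\Lambda^{(k)}$, so every integer weight $m_j$ of $\Lambda$ is divisible by $k$; as this holds for all $k \in \bN$, each $m_j = 0$, whence $\Lambda = 0$, $\nu_1 = 0$, and $\nu_0 + \nu_1 = \nu_0$ is finite, giving (iv).

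The step requiring care is the passage from $\nu_0 + \nu_1 = k(\nu_0^{(k)} + \nu_1^{(k)})$ to $\Lambda = k\Lambda^{(k)}$, that is, the claim that the sphere measure $\Lambda$ is uniquely determined by the complex L\'evy type measure $\nu = \nu_0 + \nu_1$. The key point is that $\nu_0$ is a \emph{finite} complex measure, whereas whenever $\Lambda \neq 0$ the polar density $r^{-1}\re^{-r}$ in \eqref{eq-main3} is non-integrable near the origin; hence two admissible decompositions of the same $\nu$ can differ only by a measure that is simultaneously finite and of the rigid form \eqref{eq-main3}, which forces the associated sphere measures to coincide. I expect this uniqueness-of-the-singular-part argument, together with the bookkeeping of which quantities are identified by Remark~\ref{r-qid-uniqueness}~(b), to be the main obstacle; the remaining implications are routine manipulations of characteristic functions and distinguished logarithms.
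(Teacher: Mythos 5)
Your proposal is correct and follows essentially the same route as the paper: the implications (iv)$\Leftrightarrow$(iii)$\Rightarrow$(ii)$\Rightarrow$(i) are handled identically, and the core of (i)$\Rightarrow$(iv) is the same combination of the invertibility of each root $\mu_{1/k}$ with the rigid integer structure of $\Lambda$ in Theorem~\ref{t-main3}. The only (cosmetic) difference is that the paper derives the representation of $\widehat{\mu_{1/k}}$ by taking $k$-th roots and concludes that $\mu_{1/k}$ could not be invertible for large $k$ if $\Lambda\neq 0$, whereas you compare two $\CLK$ representations of $\widehat{\mu}$ via the uniqueness in Remark~\ref{r-qid-uniqueness}~(b) to get $\Lambda=k\Lambda^{(k)}$; your explicit argument that the non-integrability of $r^{-1}\re^{-r}$ at the origin pins down $\Lambda$ correctly supplies the step the paper dismisses as \lq\lq easily seen\rq\rq.
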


If $\widehat{\mu}$ is zero-free and $\psi$ is the distinguished logarithm of $\widehat{\mu}/(\mu(\bR^n))$, then $z\mapsto \exp (t \psi(z))$ is the \emph{distinguished $t^{\rm th}$-power of $\widehat{\mu}/(\mu(\bR^n))$} for each $t\in \bR$, see \cite[Def.~10.24]{BrockLind2024}. If this is  the characteristic function of some complex measure, then so is $z\mapsto (\mu(\bR^n))^t \exp(t\psi(z))$, which corresponds to a complex measure, $\mu_t$ say. Characterisation (ii)  in Theorem~\ref{t-divisibility} hence means that $\mu_{t}\in M(\bR^n)$ is definable for all $t\in\bR$. Observe however that $\mu_t$ is not unique due to the (in general) not unique factor $(\mu(\bR^n))^t$.

\begin{proof}
To show that (i) implies (iv), let $\mu$ be invertible and have complex convolution roots $\mu_{1/k}$ of all orders $k\in \bN$. By Theorem~\ref{t-main3}, $\widehat{\mu}$ has representation \eqref{eq-main2} with the quantities $c, \gamma, \nu_0,\nu_1,\Lambda,m_j, u_j$ as specified there. Observe that we can without loss of generality choose $p$ and the $u_1,\ldots, u_p$ such that $\{u_1,-u_1\}, \ldots, \{u_p,-u_p\}$ are pairwise disjoint. Since $(\widehat{\mu_{1/k}}(z))^k = \widehat{\mu}(z)$, we have
$$\widehat{\mu_{1/k}}(z) = c^{1/k} \,  \exp\left \{ \ri k^{-1} \gamma^T z + \int_{\bR^n} \left( \re^{\ri z^T x} -1\right) \, k^{-1} ( \nu_0 + \nu_1) (\di x)\right\}$$
for all $z\in \bR^n$ and $k\in \bN$, where $c^{1/k}$ is some complex $k^{\rm th}$-root of $c$. Observe that $k^{-1} \nu_0 \in M(\bR^n)$ is finite and $k^{-1} \nu_1$ has representation \eqref{eq-main3} with $\Lambda$ replaced by $k^{-1} \Lambda = \sum_{j=1}^p \left( \frac{m_j}{k} \delta_{u_j}^{S^{n-1}} - \frac{m_j}{k} \delta_{-u_j}^{S^{n-1}} \right)$. Now if $\Lambda$ is not the zero-measure and $k> \max\{|m_1|,\ldots,|m_p|\}$, then the $m_j/k$ will no longer be integers, and it is easily seen that $k^{-1} \nu_0 + k^{-1} \nu_1$ cannot be written as $\nu_0' + \nu_1'$ for some $\nu_0'\in M(\bR^n)$ and $\nu_1'$ of the form \eqref{eq-main3} with $\Lambda$ replaced by $\Lambda'$ having the form $\Lambda' = \sum_{j=1}^{p'} \left( m_j' \delta_{u_j'}^{S^{n-1}} - m_j' \delta_{-u_j'}^{S^{n-1}} \right)$ with integers $m_1', \ldots, m_{p'}'$. Hence, if $\Lambda$ is not the zero-measure, then $\mu_{1/k}$ will not be invertible for large $k$ by Theorem~\ref{t-main3}. However, since
$$\mu_{1/k} \ast (\mu_{1/k}^{\ast (k-1)} \ast \mu') =
\mu\ast \mu' = \delta_0^n,$$
where $\mu'$ denotes the inverse of $\mu$, we conclude that $\mu_{1/k}$ is invertible. Consequently,~$\Lambda$ must be the zero-measure, so that $\widehat{\mu}$ has representation \eqref{eq-main2} with $\nu_1=0$, thus proving~(iv).

That (iv) implies (iii) was already observed after equation~\eqref{eq-exponential3}. To see that (iii) implies (ii), observe that $\widehat{\mu}$ is clearly zero-free. For each $t\in \bR$ define $\mu_t := \delta_{t\gamma}^n \ast \exp(t\nu)$. Then $\mu_t \in M(\bR^n)$ and by \eqref{eq-exponential2} we have
$$\widehat{\mu_t}(z) = \re^{t\nu(\bR^n)} \exp \left\{ t \left( \ri \gamma^T z + \int_{\bR^n} \left( \re^{\ri z^T x} - 1 \right) \, \nu(\di x) \right) \right\} = \re^{t\nu(\bR^n)} \exp \left\{ t \psi(z) \right\}.$$ Hence $z\mapsto (\mu(\bR^n))^t \exp(t\psi(z))$ is a characteristic function of some complex measure.

Finally, to see that (ii) implies (i), for $k\in \bN$ define $\mu_{1/k}$ to be the complex measure having characteristic function $z\mapsto \widehat{\mu_{1/k}}(z) = (\mu(\bR^n))^{1/k} \exp (k^{-1} \psi(z))$. Then $(\widehat{\mu_{1/k}}(z))^k = \widehat{\mu}(z)$, so that $\mu_{1/k}$ is a $k^{\rm th}$ convolution root of $\mu$. Define $\mu_{-1}$ to be the complex measure having characteristic function $z\mapsto \widehat{\mu_{-1}}(z) = (\mu(\bR^n))^{-1} \exp (- \psi(z))$. Then $\widehat{\mu_{-1}}(z) \widehat{\mu}(z) = 1$, so that $\mu_{-1} \ast \mu = \delta_0^n$, showing that $\mu$ is invertible.
\end{proof}

\begin{example}\label{ex-divisibility}
Suppose that $\mu\in M(\bR^n)$ is discrete with $\inf_{z\in \bR^n} |\widehat{\mu}(z)| > 0$. Then $\mu\in \CLK$ with finite complex L\'evy type measure by Theorem~\ref{t-Khartov}, so that $\mu$ has complex convolution roots of all orders and $\mu^{\ast t} \in M(\bR^n)$ is definable for all $t\in \bR$ by Theorem~\ref{t-divisibility}. Similarly, when $\mu \in \bC \delta_0^n + L(\bR^n)$ with $\inf_{z\in \bR^n} |\widehat{\mu}(z)| > 0$, then $\mu$ is invertible by Proposition~\ref{p1-q=n}, but $\mu^{\ast t} \in M(\bR^n)$ can be defined for all $t\in \bR$ only when $n\geq 2$ (cf. Theorem~\ref{t-delta-lebesgue1}), or $n=1$ and $m=0$ in Proposition~\ref{p2-q=n}. Finally, if $\mu$ is a probability measure on $\bR^n$ having an atom of mass greater than $1/2$, then $\mu\in \CLK$ with finite quasi-L\'evy type measure by Cuppens~\cite[Thm.~4.3.7]{Cuppens1975}, see also \cite[Thm.~3.2]{BKL21}, hence $\mu$ has complex convolution roots of all orders and $\mu^{\ast t}$ can be defined for all $t\in \bR$.
\end{example}

\end{document}